\documentclass{article}
\usepackage[latin1]{inputenc}
\usepackage{amsmath}
\usepackage{amsthm}


\usepackage{color}
\usepackage{amsfonts}
\usepackage{amssymb}

\begin{document}
 
\newtheorem{thm}{Theorem}[section]
\newtheorem*{resthm}{Theorem}
\newtheorem{cor}[thm]{Corollary}
\newtheorem{lem}[thm]{Lemma}
\newtheorem{claim}[thm]{Claim}
\newtheorem{prop}[thm]{Proposition}
\theoremstyle{definition}
\newtheorem{rem}[thm]{Remark}
\newtheorem{defn}[thm]{Definition}
\newcounter{qu}
\newcounter{cs}
\theoremstyle{remark}
\newtheorem*{remun}{Remark}
\newtheorem{question}[qu]{Question}
\newtheorem{case}[cs]{Case}

\renewcommand{\emptyset}{\varnothing}
\newcommand{\bound}{\mathcal C}

\newcommand{\restrict}[2]{{{#1}\! \!   \restriction_{\! #2}}}

\newcommand{\sm}{\smallsetminus}
\newcommand{\R}{\mathbb {R}}
\newcommand{\T}{\mathbb {T}}

\newcommand{\Q}{\mathbb {Q}}
\newcommand{\Z}{\mathbb {Z}}
\newcommand{\N}{\mathbb {N}}
\newcommand{\Lf}{\mathcal {L}}
\newcommand{\inv}{^{-1}}
\newcommand{\locUnst}[2][\epsilon]{W^u_{#1}(#2)}
\newcommand{\locUnstBd}[2][\epsilon]{W^u_{\bound, #1}(#2)}
\newcommand{\unst}[1]{W^u(#1)}
\newcommand{\stab}[1]{W^s(#1)}
\newcommand{\locStab}[2][\epsilon]{W^s_{#1}(#2)}
\newcommand{\supp}{{\mathrm{supp}~}}
\newcommand{\nw}[1]{\mathrm{NW}{(#1)}}
\newcommand{\dist}{\mathrm{dist}\ }
\newcommand{\len}{\mathrm{len}\ }
\newcommand{\Id}{\mathrm{Id}\ }
\newcommand{\tr}{\mathrm{tr}\ }
\newcommand{\NW}{{\mathrm{NW}}}
\newcommand{\Per}{{\mathrm{Per}}}

\newcommand{\cmt}[1]{{\color{red}{{#1}}}}



\title{Constraints On Dynamics Preserving Certain Hyperbolic Sets}

\author{Aaron W. Brown}


\maketitle

\begin{abstract}
We establish two results under which the topology of a compact hyperbolic set 
constrains ambient dynamics.  First if $\Lambda $ is a transitive, codimension-1, expanding attractor for some diffeomorphism, then  $\Lambda$ is a union of transitive, codimension-1 attractors (or contracting repellers) for any  diffeomorphism such that $\Lambda$ is hyperbolic.  Secondly, if $\Lambda $ is a locally maximal nonwandering  set for a surface diffeomorphism,  then  $\Lambda$ is   locally maximal for any diffeomorphism for which $\Lambda$ is hyperbolic.  
\end{abstract}

\section{Introduction}
Consider the following problem: given a subset $\Lambda$ of a manifold $M$,  describe the group of diffeomorphisms of $M$ preserving $\Lambda$ in terms of the topology of $\Lambda$.    Clearly in many examples, the topology of $\Lambda$ is too tame for it to impose interesting constraints (for example when $\Lambda$ is finite or  $\Lambda = M$).  However  through additionally dynamical hypotheses, we may refine the above into a number of more tractable problems.  For example, given a diffeomorphism $f\colon M\to M$ and a hyperbolic set  $\Lambda$ for $f$, can one describe the group of diffeomorphisms commuting with $f$ and preserving  $\Lambda$?   
Questions of this nature has been addressed extensively in the case when $\Lambda = M$; in particular, it is conjectured that that all irreducible Anosov actions of $\Bbb{Z}^k$ or $\Bbb{R}^k$, $k\ge 2$, are smoothly conjugate to algebraic actions (see for example \cite{MR1632177}).  

Alternatively we could pose the following: given $\Lambda\subset M$, can one  describe the set of all diffeomorphisms of $M$ for which $\Lambda$ is a hyperbolic set?
In considering this question, one might ask, for example, if $\Lambda$ is a hyperbolic attractor for some diffeomorphism $f\colon M\to M$, must  $\Lambda$  be an attractor for all diffeomorphisms $g\colon M\to M$ such that $\Lambda$ is a hyperbolic set?  In \cite{Fisher:2006p1846}  a natural counterexample is constructed.  However, in the same paper the following  result is proved, demonstrating strong constraints on the set of diffeomorphisms preserving  hyperbolic attractors in surfaces.  
\begin{resthm}[Fisher \cite{Fisher:2006p1846}]\label{thm:Fisher}
If $M$ is a compact surface and $\Lambda$ is a nontrivial topologically mixing hyperbolic attractor for a diffeomorphism $f$ of $M$, and $\Lambda$ is hyperbolic for a diffeomorphism $g$ of $M$, then $\Lambda$ is either a nontrivial topologically mixing hyperbolic attractor or a nontrivial topologically mixing hyperbolic repeller for $g$. 
\end{resthm}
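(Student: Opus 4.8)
The plan is to show that the one‑dimensional ``arc direction'' of $\Lambda$ is intrinsic to its topology, hence automatically $g$‑invariant, and then to identify it with one of the hyperbolic bundles of $g$; this forces $\Lambda$ to be saturated by $g$‑stable or by $g$‑unstable manifolds, hence a repeller or an attractor. To begin, since $\Lambda$ is a nontrivial topologically mixing hyperbolic attractor of a surface, one may assume $\Lambda\ne M$ (if $\Lambda=M$ then $g$ is Anosov and $\Lambda$ is trivially both an attractor and a repeller, and mixing by connectedness), and then $\Lambda$ is a codimension‑one expanding attractor for $f$: $\dim E^u_f=1$, $W^u(x)\subseteq\Lambda$ for every $x\in\Lambda$, and by the structure theory of such attractors $\Lambda$ is locally homeomorphic to $C\times I$ with $C$ a Cantor set and $I$ an arc, the arc fibres $\{c\}\times I$ being sub‑arcs of the $f$‑unstable manifolds; moreover $\Lambda$ is connected (topological mixing precludes a clopen decomposition) and infinite, so for $g$ neither $E^s_g$ nor $E^u_g$ is trivial, i.e.\ $\dim E^s_g=\dim E^u_g=1$. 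The key point is that the arc fibre through $x$ — call it the plaque $P_x$ — is recovered purely topologically: for a small product chart $U\ni x$ it is the connected component of $x$ in $\Lambda\cap U$ (a connected subset of $C\times I$ must lie in a single fibre); it is a $C^1$ sub‑arc through $x$, tangent to $E^u_f$ along its length, and by compactness the plaques have uniformly bounded length and uniformly bounded $C^1$ geometry.

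Next I would show that $E^u_f$ is $Dg$‑invariant. Because $g(\Lambda)=\Lambda$ and $g$ maps connected sets to connected sets, $g$ carries a small neighbourhood of $x$ in $P_x$ into $P_{g(x)}$; differentiating at $x$ gives $Dg_x(E^u_f(x))=E^u_f(g(x))$. So $E^u_f$ is a continuous $Dg$‑invariant line subbundle of $T_\Lambda M=E^s_g\oplus E^u_g$, and over the connected set $\Lambda$ such a subbundle must equal $E^s_g$ or $E^u_g$: one writes it as the graph of a linear bundle map between the two factors and applies a maximum principle to the resulting cocycle, using that the hyperbolic rates push graphs towards $E^u_g$ under forward iteration and towards $E^s_g$ backwards. (Alternatively I would first prove $\Lambda$ locally maximal for $g$, in the spirit of the second theorem of this paper, obtain the $g$‑local product structure $\Lambda\cap U\cong(\Lambda\cap W^s_\epsilon(x))\times(\Lambda\cap W^u_\epsilon(x))$, and note that since this is homeomorphic to $C\times I$ one factor is totally disconnected while the other contains an arc through $x$, again identifying $E^u_f$ with $E^s_g$ or $E^u_g$ at every point.)

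Finally I would run the two cases. If $E^u_f=E^u_g$, then each plaque $P_x$ is a sub‑arc through $x$ tangent to $E^u_g$, and since $\|Dg^{-n}|_{E^u_g}\|\to0$ uniformly on $\Lambda$, $\mathrm{len}(g^{-n}P_x)\to0$, whence $d(g^{-n}x,g^{-n}y)\to0$ for $y\in P_x$; thus $P_x\subseteq W^u(x)$ and $P_x$ contains a neighbourhood of $x$ in $W^u_\epsilon(x)$. Letting $x$ vary over $\Lambda$ makes $\Lambda\cap W^u_\epsilon(x)$ both open and closed in the connected arc $W^u_\epsilon(x)$, so $W^u_\epsilon(x)\subseteq\Lambda$, and iterating, $W^u(x)\subseteq\Lambda$ for all $x$; an unstable‑saturated compact hyperbolic set is an attractor, so $\Lambda$ is a hyperbolic attractor for $g$. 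The case $E^u_f=E^s_g$ is symmetric, using $\|Dg^{n}|_{E^s_g}\|\to0$, and exhibits $\Lambda$ as stable‑saturated, hence a hyperbolic repeller for $g$. Nontriviality is clear since $\Lambda$ is infinite, and topological mixing follows because $\Lambda$, being an expanding attractor, is a transitive locally maximal hyperbolic set, so a basic set, whose cyclic decomposition into topologically mixing pieces is trivial as $\Lambda$ is connected.

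The hard part will be converting the topology of $\Lambda$ into the dynamical dichotomy for $g$: making precise that the plaques are topologically intrinsic with uniform geometry, and proving that $E^u_f$ must coincide with one of $E^s_g$, $E^u_g$ (equivalently, establishing local maximality of $\Lambda$ for $g$ together with its local product structure). The remaining ingredients — the arc‑length contraction estimate, unstable saturation implying an attractor, and mixing from connectedness — should be routine consequences of the stable manifold theorem, cone estimates, and the classification of codimension‑one expanding attractors.
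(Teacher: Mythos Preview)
Your overall architecture is sound, and the observation that the plaque tangent field $E^u_f=T_x\Lambda$ is a continuous $Dg$-invariant line bundle is exactly what the paper works with (there phrased as asking whether $E^\sigma_g(x)\subset T_x\Lambda$). Your endgame---the arc-length saturation argument and mixing via connectedness---also matches the paper's conclusion of the proof of Theorem~1.1.

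The genuine gap is the dichotomy step: the assertion that a continuous $Dg$-invariant line field $V$ over $\Lambda$ must coincide with $E^s_g$ or with $E^u_g$. Your graph/maximum-principle argument proves only the conditional statement: \emph{if} $V\neq E^s_g$ at every point of $\Lambda$, then the slope of $V$ over $E^u_g$ is continuous on the compact set $\Lambda$, contracts along forward $g$-orbits and expands along backward $g$-orbits, hence vanishes and $V=E^u_g$. What it does not exclude is the mixed picture in which $V=E^s_g$ on a nonempty closed $g$-invariant set $A$, $V=E^u_g$ on another such set $B$, and on the (necessarily nonempty, by connectedness) open remainder the $g$-orbits are wandering for $g|_\Lambda$ with $\alpha_g\subset A$ and $\omega_g\subset B$. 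Connectedness of $\Lambda$ alone does not rule this out, and since at this stage nothing is known about the recurrence of $g|_\Lambda$ you cannot appeal to a dense $g$-orbit to force $A=B=\Lambda$.

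This dichotomy is precisely Proposition~3.1 of the paper, and the paper resolves it by a different device. Assuming the mixed picture, Lemma~3.2 produces $g$-periodic points $q$ with $E^s_g(q)=T_q\Lambda$ and $p$ with $E^u_g(p)=T_p\Lambda$. One then takes canonically isomorphic short transversals $\gamma_q\subset W^u_{g,\epsilon}(q)$ and $\gamma_p\subset W^s_{g,\epsilon}(p)$ and measures them with the Ruelle--Sullivan holonomy-invariant transverse measure $\nu$ coming from the $f$-measure of maximal entropy. Under forward $g$-iteration $\nu(g^n\gamma_q)$ stays bounded below while $\nu(g^n\gamma_p)\to 0$, contradicting the canonical isomorphism. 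Your proposed alternative---deduce local maximality for $g$ first via the paper's second theorem---is circular here: in the proof of Theorem~1.4, Case~2 (the case where $\Lambda$ contains an embedded arc, i.e.\ the attractor case) invokes Fisher's theorem directly.
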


In this paper we present two Theorems that generalize Theorem \ref{thm:Fisher}.
  The first generalizes the theorem to  expanding  attractors of codimension 1 in  manifolds of arbitrary dimension.  Recall that a hyperbolic attractor $\Lambda\subset M$ is \emph{expanding} if $\dim(\Lambda) =  \restrict {\dim E^u}  \Lambda$, and  is said to be of \emph{codimension 1} if $\restrict {\dim E^u}\Lambda = \dim M -1$.  
\begin{thm}\label{thm:main}
Let $f\colon   M\to M$ be a 
diffeomorphism  
and let $\Lambda$ be a compact, topologically mixing,   expanding  hyperbolic  attractor of codimension 1. 
Suppose $g\colon   M\to M$ is a second diffeomorphism 
such that $\Lambda$ is hyperbolic for  $g$.
Then $\restrict g {\Lambda}$ has a codimension-1 hyperbolic splitting  and $\Lambda$ is a topologically mixing expanding attractor  (or contracting  repeller) of codimension 1  for $g$.  \end{thm}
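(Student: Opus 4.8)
The plan is to extract the rigid topological structure that $\Lambda$ inherits from being a codimension-$1$ expanding attractor for $f$, observe that $g$ must respect it, and then force the $g$-splitting to align with it. First I would invoke the structure theory of expanding attractors: a codimension-$1$ expanding attractor is locally homeomorphic to $\mathcal C\times\mathbb R^{d-1}$ (with $d=\dim M$ and $\mathcal C$ a Cantor set), where the $\mathbb R^{d-1}$-factors are plaques of the unstable manifolds $W^u_f(x)\subseteq\Lambda$; topological mixing makes every leaf $W^u_f(x)$ dense in $\Lambda$, so $\Lambda$ is connected and, having topological dimension $d-1$, has empty interior. Since a connected subset of $\mathcal C\times\mathbb R^{d-1}$ lies in a single plaque, $W^u_f(x)$ is exactly the path-component of $x$ in $\Lambda$, a purely topological object; hence the homeomorphism $g|_{\Lambda}$ permutes leaves, $g(W^u_f(x))=W^u_f(g(x))$, and differentiating (using $T_xW^u_f(x)=E^u_f(x)$) shows that the continuous $(d-1)$-plane field $E^u_f$ is $Dg$-invariant as well as $Df$-invariant.

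Next I would bring in the $g$-splitting $T_\Lambda M=E^s_g\oplus E^u_g$ and use the standard fact that a continuous invariant subbundle of a hyperbolic set is adapted to the splitting, so $E^u_f=(E^u_f\cap E^s_g)\oplus(E^u_f\cap E^u_g)$. Because $\dim E^u_f=d-1$ while $\dim E^s_g+\dim E^u_g=d$, at least one intersection must be all of $E^s_g$ or all of $E^u_g$; the two possibilities define disjoint closed subsets covering the connected set $\Lambda$ (they cannot overlap, since $E^u_f\neq T_xM$), so exactly one holds throughout. Replacing $g$ by $g^{-1}$ if necessary --- this exchanges ``attractor'' and ``repeller'' and leaves $E^u_f$ unchanged --- I may assume $E^s_g\subseteq E^u_f$. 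It then remains to show that in fact $E^u_g=E^u_f$ (equivalently $\dim E^s_g=1$), i.e. to exclude the ``mixed'' case in which $F^u:=E^u_f\cap E^u_g$ is a nonzero proper subbundle of $E^u_g$.

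The hard part will be ruling out this mixed case. There $g$ would contract along the positive-dimensional $E^s_g\subseteq E^u_f$ and expand along the positive-dimensional $F^u\subseteq E^u_f$, so both would integrate to $g$-invariant sublaminations of $\Lambda$ carried inside the $(d-1)$-dimensional $\mathcal F^u_f$-leaves and tangent respectively to the $g$-stable and $g$-unstable directions; the $g$-return map on the leaf through a $g$-periodic point $p$ would be hyperbolic with $p$ a saddle. I would aim for a contradiction with the one-dimensionality of the (Cantor) transverse structure of $\Lambda$: the whole $g$-stable set $W^s_g(p)\cap\Lambda$ must lie in the single leaf $W^u_f(p)$ --- it is everywhere tangent to $E^s_g\subseteq E^u_f$ --- so it is a submanifold of dimension $\dim E^s_g$, whereas the $g$-unstable disc of $p$ meets a Cantor set of leaves transversally. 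Passing to a basic set $\Lambda_1$ of $g|_{\Lambda}$ and using that it contains a whole $\mathcal F^u_f$-leaf (hence, leaves being dense, $\Lambda_1=\Lambda$) would make $W^s_g(p)\cap\Lambda$ dense in $\Lambda$, forcing $d-1=\dim\Lambda\le\dim E^s_g$ and hence $\dim E^s_g=d-1$, contrary to the mixed-case hypothesis. Making the reduction to a basic set and this dimension count fully rigorous is, I expect, the technical core of the proof.

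Finally, once $E^u_g=E^u_f$ and $\dim E^s_g=1$, the direction transverse to the leaves is precisely $E^s_g$, which $g^{-1}$ expands; so for $y\in W^u_{g,\mathrm{loc}}(x)\cap\Lambda$ the component of $g^{-n}y-g^{-n}x$ transverse to the leaf (read in local charts) grows exponentially unless it vanishes, while $d(g^{-n}y,g^{-n}x)\to0$, forcing $y$ into the plaque of $x$. Thus $W^u_{g,\mathrm{loc}}(x)\cap\Lambda=W^u_{f,\mathrm{loc}}(x)$, which is open and closed in the connected disc $W^u_{g,\mathrm{loc}}(x)$, so the two coincide and $W^u_{g,\mathrm{loc}}(x)\subseteq\Lambda$; iterating gives $W^u_g(x)\subseteq\Lambda$ for every $x\in\Lambda$. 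By the standard criterion $\Lambda$ is then an attractor for $g$, it is expanding since $\dim E^u_g=d-1=\dim\Lambda$, it is of codimension $1$, and a standard argument (connectedness together with density of the unstable leaves) shows it is topologically mixing. If instead we reduced by passing to $g^{-1}$, the same conclusion for $g^{-1}$ exhibits $\Lambda$ as a topologically mixing expanding attractor for $g^{-1}$, i.e. a contracting repeller for $g$.
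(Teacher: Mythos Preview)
Your main gap is the ``standard fact'' in the second paragraph, that a continuous $Dg$-invariant subbundle $F$ of $T_\Lambda M$ must split as $F=(F\cap E^s_g)\oplus(F\cap E^u_g)$. This is false without recurrence hypotheses on $g|_\Lambda$: over a hyperbolic set consisting of two fixed points $p,q$ together with a heteroclinic orbit, one can build a continuous invariant line field equal to $E^u_g$ at $p$, to $E^s_g$ at $q$, and to a genuinely mixed direction along the orbit (forward iterates push the line toward $E^u_g$ and backward iterates toward $E^s_g$, which is exactly what continuity at $p$ and $q$ requires). Here $\Lambda$ is mixing for $f$ but has \emph{a priori} no recurrence for $g$, so this obstruction is live; one must honestly rule out points at which both $E^s_g$ and $E^u_g$ are transverse to $T_x\Lambda$. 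This is precisely the paper's Proposition~3.1, and the proof is not soft: such a point forces the existence of $g$-periodic $p,q\in\Lambda$ with $E^u_g(p)\subset T_p\Lambda$ and $E^s_g(q)\subset T_q\Lambda$, and one then uses the Ruelle--Sullivan transverse measure on $\Lambda$ to reach a contradiction (a short arc of $W^u_g(q)$ transverse to the lamination is canonically isomorphic to one in $W^s_g(p)$; under $g$-iteration the transverse measure of the latter tends to zero while that of the former is bounded below).

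Your third paragraph also has an internal inconsistency and further gaps. Having assumed $E^s_g\subset E^u_f$ you state the goal $E^u_g=E^u_f$, which would force $E^s_g=0$; the argument you actually sketch concludes $\dim E^s_g=d-1$, i.e.\ $E^s_g=E^u_f$, while the final paragraph is again written for $E^u_g=E^u_f$. Beyond the bookkeeping, the appeal to ``basic sets of $g|_\Lambda$'' is unjustified since $\Lambda$ is not known to be locally maximal for $g$, and the assertion that such a set contains an entire $f$-leaf (hence equals $\Lambda$, hence $W^s_g(p)\cap\Lambda$ is dense) needs a real argument. The paper's route to the dimension constraint is quite different: in the convention $E^u_g\subset T\Lambda$ it restricts to a boundary leaf $\mathcal C\cong\mathbb R^{d-1}$, shows the induced map $\tilde g$ is hyperbolic there, bounds the number of $\tilde g$-periodic points via the Lefschetz theorem on the one-point compactification $\mathcal C\cup\{\infty\}\cong S^{d-1}$, and then shows $\dim E^s_g\ge 2$ would force infinitely many such periodic points.
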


Dropping the assumption of topological mixing, a straightforward generalization of the above is the following.

\begin{thm}\label{thm:main2}
Let $f\colon   M\to M$ be a
diffeomorphism and let $\Lambda$ be a compact, transitive,   expanding hyperbolic  attractor of codimension 1. 
Suppose $g\colon   M\to M$ is a second diffeomorphism 
such that $\Lambda$ is hyperbolic for $g$.   
Then $\Lambda$ decomposes into a finite number of  compact sets $\{\Lambda_j\}$ such that $\restrict g {\Lambda_j}$ has a codimension-1 hyperbolic splitting, and  $\Lambda_j$ is a transitive   expanding   attractor  (or contracting  repeller) of codimension-1  for $g$.  
\end{thm}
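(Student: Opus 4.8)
The plan is to deduce Theorem~\ref{thm:main2} from Theorem~\ref{thm:main} by passing to the spectral decomposition of $\Lambda$ and observing that its pieces are intrinsic enough that the arbitrary diffeomorphism $g$ is forced to permute them. Being a transitive hyperbolic attractor, $\Lambda$ is a basic set, and the standard structure theory of hyperbolic attractors gives a decomposition $\Lambda = X_0\sqcup\dots\sqcup X_{m-1}$ into pairwise disjoint compact sets cyclically permuted by $f$, with $\restrict{f^m}{X_i}$ topologically mixing and with each $X_i$ a topologically mixing, expanding hyperbolic attractor of codimension $1$ for $f^m$ (the attractor property of the pieces being a standard feature of the decomposition, and the hyperbolic splitting of $f^m$ over $X_i$ being the restriction of $E^s\oplus E^u$). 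The observation I would stress is that each $X_i$ is \emph{connected}: for $x\in X_i$ one has $X_i=\overline{W^u(x)}$ (unstable manifold for $f^m$) by topological mixing and the attractor property, while $W^u(x)=\bigcup_{n\ge 0} f^{mn}\big(\locUnst{f^{-mn}x}\big)$ is connected, being an increasing union of embedded disks through $x$. As the $X_i$ are finitely many, pairwise disjoint and closed, they are also open in $\Lambda$; hence $\{X_0,\dots,X_{m-1}\}$ is exactly the set of connected components of $\Lambda$, and the homeomorphism $\restrict g{\Lambda}$ permutes them.

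Next I would apply Theorem~\ref{thm:main} to each piece. Let $q$ be the order of the permutation that $g$ induces on $\{X_0,\dots,X_{m-1}\}$, so $g^q(X_i)=X_i$ for every $i$, and note that $X_i$ is hyperbolic for $g^q$, being a $g^q$-invariant closed subset of the $g^q$-hyperbolic set $\Lambda$. Applying Theorem~\ref{thm:main} with $f^m,\, g^q,\, X_i$ in the roles of $f,\,g,\,\Lambda$ shows that $\restrict{g^q}{X_i}$ has a codimension-$1$ hyperbolic splitting---necessarily $\restrict{(E^s_g\oplus E^u_g)}{X_i}$, by uniqueness of the hyperbolic splitting---and that $X_i$ is a topologically mixing expanding attractor, or contracting repeller, of codimension $1$ for $g^q$. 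Since $Dg$ carries the splitting over $X_i$ isomorphically onto the splitting over $g(X_i)$, the fiber dimensions of $E^s_g$ and $E^u_g$ are constant along each $g$-orbit of pieces; and since $g$ conjugates $(X_i,\restrict{g^q}{X_i})$ to $(g(X_i),\restrict{g^q}{g(X_i)})$ while carrying attracting neighborhoods to attracting neighborhoods, the alternative ``attractor or repeller'' is constant along each $g$-orbit of pieces as well.

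Finally I would reassemble. For each $g$-orbit of pieces $\{X_{i_0},\dots,X_{i_{r-1}}\}$ set $\Lambda_j := X_{i_0}\cup\dots\cup X_{i_{r-1}}$; this is compact, $g$-invariant, hyperbolic for $g$, and $\Lambda = \bigsqcup_j \Lambda_j$. In the ``attractor'' case, each $X_{i_\ell}$ is an attractor for $g^q$, so $\Lambda_j$, a disjoint union of $g^q$-attractors, is an attractor for $g^q$; being $g$-invariant, $\Lambda_j$ is then an attractor for $g$ by the standard fact that an attractor for a power of a diffeomorphism that is invariant under the diffeomorphism is an attractor for the diffeomorphism. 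It is transitive because $\restrict{g^q}{X_{i_0}}$ is topologically mixing, and its topological dimension equals $\restrict{\dim E^u_g}{\Lambda_j} = \dim M - 1$; thus $\Lambda_j$ is a transitive, expanding attractor of codimension $1$ for $g$. In the ``repeller'' case the same argument applied to $g^{-1}$ shows $\Lambda_j$ is a transitive contracting repeller of codimension $1$ for $g$. I expect essentially no difficulty beyond invoking Theorem~\ref{thm:main}: the remaining ingredients---that the spectral pieces of an attractor are themselves attractors, and that attractor/repeller status transfers between a diffeomorphism and its powers---are routine. The one genuinely load-bearing observation is the identification of the spectral pieces of $\Lambda$ with its connected components, which is what forces $g$ to respect the decomposition.
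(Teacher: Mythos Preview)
Your proposal is correct and follows essentially the same route as the paper: decompose $\Lambda$ via the spectral decomposition for $f$ into connected mixing pieces, observe that these coincide with the connected components of $\Lambda$ so that $g$ must permute them, apply Theorem~\ref{thm:main} to a suitable iterate $g^k$ on each piece, and then take $g$-orbits of pieces to form the $\Lambda_j$. The paper's proof is terser---it does not spell out the connectedness argument or the transfer of the attractor property from $g^k$ to $g$---but the strategy is identical.
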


\begin{remun}
 Portions of Theorems \ref{thm:main} and \ref{thm:main2} hold in the slightly trivial case that $\Lambda$ is non-expanding.  Indeed by Theorem 4.3 of \cite{MR0006493}, if $\dim(\Lambda) = \dim M$ then $\Lambda$ has nonempty interior, which according to Theorem 1 of \cite{MR2199438} implies that $\Lambda = M$.  
Then $\Lambda = M$ will trivially be a hyperbolic attractor for any diffeomorphism $g\colon M\to M$.  However, we don't expect the constraint  on the dimensions of the hyperbolic splitting in the conclusion of Theorem \ref{thm:main}  to hold. Indeed
 by the Franks-Newhouse Theorem for codimension-1 Anosov diffeomorphisms (\cite{MR0271990}, \cite{MR0277004}),  this would imply $M = \T^{n}$, hence we could find hyperbolic linear maps on $M$ with arbitrary dimensional splittings. \end{remun}

Secondly, we generalize Theorem \ref{thm:Fisher} to locally maximal subsets of surfaces with nontrivial recurrence.  
We will  call a set \emph{nonwandering} if $ \Lambda\subset \NW(f)$. 
\begin{thm}\label{thm:LM}
Let $f\colon S\to S$ be a 
diffeomorphism of a surface $S$, and let $\Lambda$ be a nonwandering, locally maximal, compact hyperbolic set for $f$.  Assume $g\colon S\to S$ is a second diffeomorphism such that $ \Lambda$ is hyperbolic for $g$.  
Then $\Lambda$ is locally maximal for $g$.
\end{thm}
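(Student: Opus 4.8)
Throughout, recall the standard equivalence that a compact hyperbolic set is \emph{locally maximal} precisely when it has \emph{local product structure}: there are $\epsilon,\delta>0$ so that for all $x,y$ in the set with $\dist(x,y)<\delta$ the intersection $\locStab{x}\cap\locUnst{y}$ is a single point, again lying in the set. Recall also that an attractor or a repeller of a diffeomorphism is locally maximal, and that a finite disjoint union of compact invariant locally maximal sets is locally maximal. The plan is to peel off the pieces of $\Lambda$ one at a time, classify each using the structure theory of hyperbolic basic sets on surfaces, dispose of the attractor/repeller pieces by appealing to Theorem~\ref{thm:main2}, and treat the remaining ``saddle'' pieces by a direct argument exploiting that they are totally disconnected.

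First I would reduce to the case that $\Lambda$ is a basic set for $f$. Since $\Lambda\subseteq\NW(f)$ is locally maximal and hyperbolic for $f$, local maximality together with $\Lambda\subseteq\NW(f)$ ensures $\Lambda=\NW(f|_\Lambda)$, so $f|_\Lambda$ has a spectral decomposition $\Lambda=\Lambda_1\sqcup\dots\sqcup\Lambda_k$ into finitely many disjoint compact transitive locally maximal sets. Each $\Lambda_j$ is hyperbolic for $g$, and a finite disjoint union of locally maximal sets is locally maximal, so it suffices to treat each $\Lambda_j$; thus assume $\Lambda$ is a basic set for $f$. If $\Lambda$ is a single periodic orbit it is a hyperbolic periodic orbit for $g$, hence locally maximal; if $\Lambda$ has nonempty interior then $\Lambda=S$ (by the results cited in the remark following Theorem~\ref{thm:main}: a transitive hyperbolic set with nonempty interior in $S$ is all of $S$), and $S$ is trivially locally maximal for $g$. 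So we may assume $\Lambda$ is infinite and nowhere dense, which on a surface forces $\dim E^s_f=\dim E^u_f=1$.

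Next I would invoke the classification of such basic sets. Using the $f$-local product structure, for each $x\in\Lambda$ the set $\Lambda\cap\locUnst{x}$ is either a neighborhood of $x$ in the arc $\locUnst{x}$ — in which case (by a standard transitivity argument, at every point) $\Lambda$ is saturated by $f$-unstable manifolds and is a codimension-$1$, hence expanding (as $\Lambda$ is nowhere dense), hyperbolic attractor for $f$ — or it is totally disconnected; symmetrically for $\Lambda\cap\locStab{x}$. This yields three cases: $\Lambda$ is an attractor for $f$, a repeller for $f$, or $\Lambda$ is locally a product of two totally disconnected sets and hence a Cantor set. In the attractor case, Theorem~\ref{thm:main2} applies (codimension $1$ is automatic on a surface) and shows $\Lambda$ decomposes into finitely many disjoint compact transitive expanding attractors or contracting repellers for $g$; each of these is an attractor or repeller of $g$ (or $g\inv$), hence locally maximal for $g$, and so their finite disjoint union $\Lambda$ is locally maximal for $g$. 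The repeller case is identical after passing to $f\inv$ and $g\inv$, since local maximality is preserved under inversion.

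The crux is the remaining case: $\Lambda$ is a Cantor set, hyperbolic for $g$, to be shown locally maximal for $g$. Here I would argue by contradiction: if $\Lambda$ lacks $g$-local product structure, there are $x_n,y_n\in\Lambda$ with $\dist(x_n,y_n)\to0$ while the point $z_n:=\locStab[\epsilon]{x_n}\cap\locUnst[\epsilon]{y_n}$ (uniquely defined by uniform transversality of the $g$-leaves) does not lie in $\Lambda$; after passing to a subsequence $x_n,y_n,z_n\to p\in\Lambda$, and by the stable/unstable manifold theorem the entire $g$-orbit of each $z_n$ remains in a prescribed small neighborhood of $\Lambda$, with $\omega$- and $\alpha$-limit sets inside $\Lambda$. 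Using total disconnectedness, choose a clopen neighborhood $A$ of $p$ in $\Lambda$ separated from $\Lambda\smallsetminus A$ by a set disjoint from $\Lambda$; near $A$ the $f$-local product structure says that the only points lying on both an $f$-unstable leaf and an $f$-stable leaf of points of $A$ are the points of $A$ itself. The goal is to place the escaping points $z_n$ onto the $f$-stable and $f$-unstable laminations through $A$ and thereby manufacture intersections of these laminations outside $\Lambda$, contradicting $f$-local maximality; making this work requires matching the $g$-stable (resp.\ $g$-unstable) leaves through points of $A$ against the $f$-stable or $f$-unstable leaves there, for which one can use that $E^s_g$ is a continuous line field on the totally disconnected set $\Lambda$ and that the loci $\{E^s_g=E^s_f\}$ and $\{E^s_g=E^u_f\}$ have disjoint closures (otherwise the $f$-splitting would fail to be transverse at a common limit point). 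I expect the genuine difficulty to lie precisely here — transferring an obstruction for $g$ into an obstruction for $f$ when $f$ and $g$ need not commute and the two pairs of invariant laminations through $\Lambda$ are a priori unrelated — and in particular in controlling the local picture on the part of $\Lambda$ where $E^s_g$ is transverse to both $E^s_f$ and $E^u_f$.
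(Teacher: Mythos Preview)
Your reduction steps---spectral decomposition to a basic set, disposing of the periodic-orbit and nonempty-interior cases, and the trichotomy into attractor/repeller/Cantor set---match the paper's proof essentially verbatim (the paper cites Fisher's theorem for the attractor case rather than Theorem~\ref{thm:main2}, but on a surface these amount to the same thing). The gap is exactly where you flag it: in the Cantor case, you have not ruled out the region of $\Lambda$ on which $E^s_g$ is transverse to both $E^s_f$ and $E^u_f$, and your contradiction argument has no mechanism to place the escaping points $z_n$ on the $f$-laminations when the $g$-leaves through $A$ genuinely cut across both $f$-foliations. Without that, the argument does not close.

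The paper's resolution is to show this transverse region is \emph{empty}: at every $x\in\Lambda$ one has $E^s_g(x)=T_x\Gamma^{j}(x)$ for some $j\in\{1,2\}$ (and likewise $E^u_g$). The key step is a purely topological lemma (Lemma~\ref{lem:4.1}) asserting that $g$ carries $\check\Gamma^k_\eta(x)=\Gamma^k_\eta(x)\cap\Lambda$ into some $\check\Gamma^j_{\eta'}(g(x))$. The proof exploits the Cantor structure: points $x$ with the \emph{boundary property} (endpoints of complementary gaps in both $\check\Gamma^1$ and $\check\Gamma^2$) are dense, and at such an $x$ the local leaf $\Gamma^k_\eta(x)$ separates a neighborhood into a side meeting $\Lambda$ and a side disjoint from $\Lambda$; if $g(\check\Gamma^k_\eta(x))$ failed to lie in a single $f$-leaf, one could find a point of $g(\check\Gamma^k_\eta(x))\cap\Lambda$ without the boundary property, forcing points of $\Lambda$ on both sides of $g(\Gamma^k_\eta(x))$ and hence on both sides of $\Gamma^k_\eta(x)$, a contradiction. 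Once $g$ preserves the $f$-lamination of $\Lambda$, the derivative $D_xg$ must permute the tangent lines $T_x\Gamma^1,T_x\Gamma^2$, and then a standard cone-contraction argument forces each of $E^s_g,E^u_g$ to coincide with one of them (Corollaries~\ref{cor:tang} and~\ref{cor:tang2}). From there, a quantitative argument (Proposition~\ref{lem:cont}) upgrades tangency to the containment $\check\Gamma^1_\epsilon(x)\subset\clocStab{x}$, $\check\Gamma^2_\epsilon(x)\subset\clocUnst{x}$, and the $g$-local product structure drops out of the $f$-one. The missing idea in your proposal is precisely this use of Cantor boundary points to force $g$-invariance of the $f$-lamination.
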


As a corollary to Claim \ref{clm:cantor} and Proposition \ref{lem:cont} used in the proof of Theorem \ref{thm:LM} we obtain that the stable and unstable manifolds for $g$ and $f$ are essentially algined in the case that $\Lambda$ is totally disconnected. 
\begin{cor} \label{cor:algined}
Let $\Lambda$, $f$, and $g$ be as in Theorem \ref{thm:LM} and assume that $\Lambda$ is totally disconnected.  Then  we may find some $\epsilon, \epsilon'>0$ so that for all $x\in \Lambda$ \[W^u_{f,\epsilon}(x) \cap \Lambda \subset W^\sigma_{g,\epsilon'}(x) \cap \Lambda\] and \[W^s_{f,\epsilon}(x) \cap \Lambda \subset W^{\sigma'}_{g,\epsilon'}(x) \cap \Lambda\]  for $\sigma, \sigma' \in \{u,s\}$ (depending on $x$), where $W^\sigma_{g,\epsilon}(x)$ and $W^\sigma_{f,\epsilon}(x)$ denote the local $\sigma$-manifolds for the dynamics of $g$ and $f$ respectively.
\end{cor}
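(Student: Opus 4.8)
The plan is to extract the statement directly from the two ingredients it cites, namely Claim \ref{clm:cantor} and Proposition \ref{lem:cont}, which are proved en route to Theorem \ref{thm:LM}. Since $\Lambda$ is totally disconnected, every point $x \in \Lambda$ has the property that a small piece of its $f$-unstable manifold $W^u_{f,\epsilon}(x)$ meets $\Lambda$ in a totally disconnected set; the same holds for the $g$-invariant manifolds. I expect Proposition \ref{lem:cont} to provide the crucial local trapping: the intersection of one of the $g$-local manifolds with $\Lambda$ near $x$ captures the $\Lambda$-points lying on an $f$-local manifold through $x$, because there is no room (no continuum) for these points to escape. The role of Claim \ref{clm:cantor} (presumably the statement that, under total disconnectedness, $\Lambda$ locally looks like a Cantor set and the local product structure degenerates appropriately) is to guarantee that for each $x$ the $f$-unstable direction is subordinate to \emph{one} of the two $g$-directions, and symmetrically for the $f$-stable direction — this is why the conclusion is phrased with $\sigma, \sigma' \in \{u,s\}$ depending on $x$ rather than with a fixed assignment $\sigma = u$.

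Concretely, the steps I would carry out are as follows. First, fix $\epsilon$ small enough that the local manifolds $W^\tau_{f,\epsilon}(x)$ and $W^\tau_{g,\epsilon}(x)$ ($\tau \in \{u,s\}$) are well defined, embedded discs for all $x \in \Lambda$, and small enough that the conclusions of Claim \ref{clm:cantor} and Proposition \ref{lem:cont} apply uniformly; then choose $\epsilon' > \epsilon$ (or simply $\epsilon'$ determined by the local product structure for $g$) so that $W^u_{f,\epsilon}(x) \cap \Lambda$ and $W^s_{f,\epsilon}(x)\cap\Lambda$ are contained in the $\epsilon'$-ball about $x$. Second, take $y \in W^u_{f,\epsilon}(x) \cap \Lambda$; by the local product structure for the $g$-dynamics, $y$ lies in $W^u_{g,\epsilon'}(z) \cap W^s_{g,\epsilon'}(x)$ or $W^s_{g,\epsilon'}(z)\cap W^u_{g,\epsilon'}(x)$ for the appropriate bracket point — and the point is to show the coordinate of $y$ transverse to the chosen $g$-direction is forced to vanish. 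Third, invoke the total disconnectedness of $\Lambda$ together with Claim \ref{clm:cantor}: since the arc of $W^u_{f,\epsilon}(x)$ from $x$ to $y$ is connected while $\Lambda$ is not, the $g$-holonomy cannot spread $\Lambda\cap W^u_{f,\epsilon}(x)$ across two transverse $g$-directions, so exactly one of $\sigma = u$ or $\sigma = s$ works at $x$; Proposition \ref{lem:cont} then upgrades this to the inclusion $W^u_{f,\epsilon}(x) \cap \Lambda \subset W^\sigma_{g,\epsilon'}(x) \cap \Lambda$. The argument for $W^s_{f,\epsilon}(x)\cap\Lambda$ is identical after replacing $f$ by $f^{-1}$ (equivalently, swapping the roles of stable and unstable), yielding $\sigma'$.

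The main obstacle I anticipate is the uniformity of the constants: a priori the assignment $x \mapsto \sigma(x)$ could force $\epsilon'$ to depend on $x$, and one must use compactness of $\Lambda$ plus the uniform hyperbolicity constants for both $f$ and $g$ to pin down a single pair $\epsilon, \epsilon'$ that works everywhere. A secondary subtlety is ruling out the degenerate possibility that $y = x$ is the \emph{only} point of $W^u_{f,\epsilon}(x)\cap\Lambda$, in which case both choices of $\sigma$ trivially work and no contradiction is available — but this case is harmless since the desired inclusion holds vacuously. Provided Claim \ref{clm:cantor} is stated with enough uniformity (which it should be, having been used to prove Theorem \ref{thm:LM} itself), the corollary follows with essentially no new work.
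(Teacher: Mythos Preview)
Your proposal misidentifies the content of the two cited results and, as a consequence, sketches an argument that is both unnecessary and circular. In the paper, Proposition \ref{lem:cont} is not an auxiliary trapping lemma that helps establish the inclusion --- it \emph{is} the inclusion. With the notation $\Gamma^1_\epsilon(x),\Gamma^2_\epsilon(x)$ for the local $f$-stable and $f$-unstable manifolds, Proposition \ref{lem:cont} states directly that $\Gamma^1_\epsilon(x)\cap\Lambda\subset W^s_{g,\epsilon}(x)\cap\Lambda$ and $\Gamma^2_\epsilon(x)\cap\Lambda\subset W^u_{g,\epsilon}(x)\cap\Lambda$, after a locally constant relabeling of which $\Gamma^k$ matches which $g$-direction (this relabeling is exactly why $\sigma,\sigma'$ depend on $x$). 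Claim \ref{clm:cantor} supplies the Cantor structure of $\check\Gamma^k_\epsilon(x)$ that feeds the chain of lemmas (Lemma \ref{lem:4.1}, Corollaries \ref{cor:tang} and \ref{cor:tang2}) establishing tangency of $E^\sigma_g$ with the $f$-foliations, which in turn enables Proposition \ref{lem:cont}. Once that proposition is in hand, the corollary is a restatement; only the spectral-decomposition reduction to the mixing case and the trivial finite case separate the hypotheses of Theorem \ref{thm:LM} from those of Proposition \ref{lem:cont}.

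Your Steps 2--3 instead invoke ``the local product structure for the $g$-dynamics'' to place $y$ at a $g$-bracket point. But local product structure for $g$ is precisely the conclusion of Theorem \ref{thm:LM}; it is not available here, and in fact Proposition \ref{lem:cont} is the key input \emph{to} that theorem, not a consequence of it. The connectedness/holonomy heuristic in Step 3 (``the arc is connected while $\Lambda$ is not, so the holonomy cannot spread $\Lambda$ across two $g$-directions'') does not by itself force the inclusion and does not reflect the actual mechanism, which is dynamical: one shows via the Cantor structure that $g$ carries $f$-leaf slices of $\Lambda$ into $f$-leaf slices, deduces tangency of the splittings, and then proves the set-level inclusion by iterating $g$ and using uniform hyperbolicity estimates.
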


\begin{remun}By Remark \ref{lem:perfecttrans} (below), one could replace the hypothesis  that $\Lambda$ is nonwandering   in Theorem \ref{thm:LM} with the hypothesis that $\Lambda$ is transitive and perfect.
\end{remun}

We leave the reader with the following questions that would extend our results and address some of the more general problems outlined above.  Considering the counterexample to extending Theorem \ref{thm:Fisher} to arbitrary hyperbolic attractors, as constructed in \cite{Fisher:2006p1846}, 
we pose the following questions. 
\begin{question} Let $\Lambda$  be a compact, topologically mixing,  hyperbolic attractor for a diffeomorphism $f\colon M\to M$,  and suppose that $\Lambda \not \subset S$ for any embedded submanifold $S\subset M$ with $\dim S <\dim M$.  If a second diffeomorphism $g\colon M\to M$ is such that $\Lambda$ is hyperbolic for $g$, is $\Lambda$ a topologically mixing attractor (or repeller) for $g$?\end{question}

\begin{question} If $\Lambda$ is a compact hyperbolic set for a diffeomorphism $g\colon M\to M$, in what ways does the topology of $\Lambda$ constrain the dimensions of the hyperbolic splitting? In particular, if $\Lambda$ is a mixing expanding attractor for $f\colon M\to M$, and $\Lambda$ is hyperbolic for a diffeomorphism $g\colon M\to M$, can ${\dim E^u_g (x)} <{\dim E^u_f (x)} $ for some $x\in \Lambda$?
\end{question}
The following question is related to Problem 1.4 posed by Fisher in \cite{Fisher:2006p1846}; it would generalize Theorem \ref{thm:LM} to arbitrary dimensions.
  
\begin{question} Let $\Lambda$  be a nonwandering, locally maximal, compact  hyperbolic set for a diffeomorphism $f\colon M\to M$.   If a second diffeomorphism $g\colon M\to M$ is such that $\Lambda$ is a hyperbolic set, is $\Lambda$ locally maximal for $g$? \end{question}
\begin{question}If the answer to Question 3 is negative, does the result hold under the additional assumption that $\Lambda$ is totally disconnected? \end{question}
Finally, we pose the following question, which  would require the analysis of locally maximal hyperbolic sets whose local product structure is non-uniform.  
\begin{question} Can the assumption that $\Lambda$ is nonwandering be dropped in Theorem \ref{thm:LM}?  In particular, does the conclusion remain true if $\Lambda$ contains isolated points or if $\Lambda$ is not transitive?
\end{question}

\section{Preliminaries.}
The basic properties of uniformly hyperbolic dynamics over compact sets are presented in the literature.  See for example \cite{MR0271991}, \cite{Katok:1995p8357}, and \cite{Smale:DDS}.   We briefly outline the main results needed here.  
Let $M$ be a smooth manifold endowed with a Riemannian metric. Given a diffeomorphism $f\colon M\to M$ we say that $\Lambda$ is an \emph{invariant set} if $f(\Lambda)\subset \Lambda$.  
We call a set $\Lambda\subset M$   \emph{hyperbolic} for $f$ if it is invariant and if there exist constants $\kappa<1$ and $C>0$, 
  and a continuous $Df$-invariant splitting of the tangent bundle $T_x M = E^s(x)\oplus E^u(x)$ over $\Lambda$   
 so that for all points $x\in \Lambda$ and any $n \in \N$
\begin{align*}
   \|Df^n_x v\|\le C \kappa^n \|v\|,  \quad &\mathrm{ for } \ v\in E^s(x)\\
 \|Df^{-n}_x v\|\le C \kappa^{n}\|v\|,  \quad &\mathrm{ for }\   v\in E^u(x).
\end{align*}
If $\Lambda$ is a compact hyperbolic set for $f$, it is possible to  endow $M$ with a smooth metric such that we may take $C = 1$ above.  Such a metric is said to  be \emph{adapted} to the dynamics of $f$ on $\Lambda$.  Let us choose such a metric.  

If $\Lambda$ is a compact hyperbolic  set, then there exists an $\epsilon>0$ so that for 
each point $x\in \Lambda$, the sets 
\begin{align*}
W^s_\epsilon (x) &:= \{y\in M\mid d(f^n(x),f^n(y))< \epsilon, \  \mathrm{for\ all} \  n\ge 0\} \\ W^u_\epsilon (x) &:= \{y\in M\mid d(f^n(x),f^n(y))< \epsilon, \ \mathrm{for\ all} \  n\le 0\}
\end{align*}
are embedded open disks, called the \emph{local stable} and \emph{unstable} manifolds.  If $f\colon M\to M$ is a $C^k$ diffeomorphism ($k\ge 1$) then  for $\sigma = \{s, u\}$   each $W^\sigma_\epsilon(x)$ is $C^k$ embedded, and the family $\{W^\sigma_\epsilon(x)\}_{x\in \Lambda}$ varies continuously with $x$.  We have $T_x W^\sigma_\epsilon (x) = E^\sigma(x)$.

Furthermore, if $d$ is the distance function on $M$ induced by  an adapted metric, we can find $\lambda<1<\mu$ with the property that for  all $x\in \Lambda, y\in \locStab[\epsilon]x, z\in \locUnst[\epsilon]x$ and $n\ge 0$ we have
\begin{align}
d(f^n(x), f^n(y))& \le \lambda^n d(x,y)\label{eqn:asymS}\\
d(f^{-n}(x), f^{-n}(z))& \le \mu^{-n} d(x,z).\label{eqn:asymU}
\end{align}

Note that (\ref{eqn:asymS}) and (\ref{eqn:asymU})  imply that  $ f(W_{\epsilon}^s(f\inv(x)))\subset W_{\epsilon}^s(x)$ and $W_{\epsilon}^u(x) \subset f(W_{\epsilon}^u(f\inv(x)))$. 
We define \begin{align*}\unst x &:= \bigcup _{n \in \N} f^n(W^u_{\epsilon}(f^{-n}(x))) \\ \stab x &:= \bigcup _{n \in \N} f^{-n}(W^s_{\epsilon}(f^{n}(x))) \end{align*} called the \emph{global} stable and unstable manifolds.  We note that $\unst x \cong \R^{\dim E^u(x)}$ and $\stab x \cong \R^{\dim E^s(x)}$ are $C^k$ injectively immersed submanifolds.  

For the remainder of this article we will always use a metric adapted to our dynamics and denote by $d$ the induced distance function on $M$.  The constants $\lambda<1<\mu$ will always be as in  (\ref{eqn:asymS}) and (\ref{eqn:asymU}).  We shall call the $\epsilon$ satisfying  the above properties the \emph{radius of the local stable and unstable manifolds} of $\Lambda$, usually denoted by $\epsilon_0$.  We will denote $B(x,\delta):=\{y\in M\mid d(x,y)< \delta\}.$


A point $x$ in a topological space $X$ is said to be \emph{nonwandering}   for a homeomorphism $f\colon X \to X$ if for 
any open $U\ni x$, there is some $n> 0$ such that $f^n(U)\cap U \neq \emptyset$; otherwise it is called \emph{wandering}.  
We denote by $\NW(f)$ the set of all nonwandering points for $f$.  
{We will call an invariant set $\Lambda$ \emph{nonwandering} if $\Lambda\subset \NW(f)$.  

An invariant set $\Lambda$ is called \emph{topologically transitive} if it contains a dense orbit.  Alternatively, a compact invariant subset $\Lambda\subset M$ is topologically transitive if for all pairs of nonempty open sets $U, V\subset \Lambda$, there is some $n$ such that $f^n(U)\cap V \neq \emptyset$.  Finally, an invariant set $\Lambda$ is called \emph{topologically  mixing} if for all pairs of nonempty  open sets $U, V \subset \Lambda$, there is some $N$ such that $f^n(U)\cap V \neq \emptyset$ for all $n\ge N$.  
We note that for arbitrary compact transitive sets $\Lambda$, even those that are hyperbolic and locally maximal we may have $\NW(\restrict f \Lambda) \neq \Lambda$ or even $\Lambda \not\subset \NW(\Lambda)$ (for example, consider the closure of the orbit of a transverse heteroclinic intersection).  However we have the following basic observation.   
\begin{rem}\label{lem:perfecttrans}
Let $X\subset M$ be a compact transitive set for $f$.  If $X$ contains no isolated points then $\NW(\restrict f \Lambda) = \Lambda$.  
\end{rem}
Indeed, let $x$ have a dense orbit.  For any $y\in X $ and open $y\in U\subset X$ we can find $n, k \in \Z$ so that $f^n(x)\subset U\sm \{y\}$ and $f^k(x) \subset U\sm \{y, f^n(x)\}$.   But then $f^{|k-n|} (U) \cap U \neq \emptyset$.

A compact, hyperbolic, invariant set $\Lambda$ is a \emph{hyperbolic  attractor} if there is some open neighborhood $ \Lambda\subset V$ such that $\bigcap_{n\in \N}f^n(V) = \Lambda$.  Alternatively, if $\Lambda$ is compact hyperbolic set, then it is a hyperbolic attractor if and only if $\unst x \subset \Lambda$ for all $x\in \Lambda$.  If $\Lambda$ is a topologically  mixing attractor, then for each $x\in \Lambda$, $\unst x$ is dense in $\Lambda$.  For a hyperbolic attractor $\Lambda$, the \emph{basin} of $\Lambda$ is the set $\bigcup_{y\in \Lambda}\stab y$.

We recall from the introduction that hyperbolic attractor $\Lambda$ is called \emph{expanding} if the topological dimension of $\Lambda$ equals the dimension of the unstable manifolds.  (For an introduction to topological dimension see \cite{MR0006493}.)  By a \emph{contracting repeller} we mean an expanding attractor for $f\inv$.  Additionally recall that  if $M$ is an $(n+1)$-dimensional manifold, we say that a hyperbolic attractor $\Lambda$ is of \emph{codimension 1}   if $\dim E^u(x) = n$ for each $x\in \Lambda$.  
The topology of transitive, expanding, hyperbolic attractors of codimension 1 has been studied extensively, primarily in a series of papers by Plykin.  See for example \cite{MR2095625}, \cite{MR0467836}, 
\cite{Plykin:HAOD}, \cite{Plykin:HAOD-NO}, and \cite{MR771099}. 
These papers outline constraints on the topology of the basins of such 
 attractors, and on the diffeomorphisms admitting them. These results suggest Theorems \ref{thm:main} and \ref{thm:main2} should be true.  However, we use very little of this structure to establish our result other than the basic observation that a transitive, expanding, hyperbolic    attractor of codimension 1 is locally the product of $\R^n$ and a Cantor set.

Given a invariant set $X$ and  $x\in X$ we define 
  \[\omega(x) :=\bigcap_{n\in \N}\overline{\bigcup_{k = n}^\infty f^k(x)}\]
and $$\alpha(x) := \bigcap_{n\in \N}\overline{\bigcup_{k = n}^\infty f^{-k}(x)}$$ called the \emph{$\omega$-limit set} and \emph{$\alpha$-limit set}.  
 If $X$ is compact then $\omega(x)$, and $\alpha(x)$ are nonempty compact invariant sets contained in $\NW(f)$.  
We will be primarily interested in situations where $\alpha(x)$ and $\omega(x)$ are hyperbolic, in which case we will invoke  Lemma \ref{lem:FinAsym} and Proposition \ref{cor:PerDense} below.

 \begin{lem}\label{lem:FinAsym}
{Suppose $\omega(x)$ is a finite hyperbolic set.  Then $x\in \stab{\omega(x)}$.  Similarly, if $\alpha(x)$  is a finite hyperbolic set then $x\in \unst{\alpha(x)}$.}
\end{lem}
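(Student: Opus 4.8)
The plan is to reduce the lemma to the classical local stable manifold theorem at a hyperbolic fixed point. The statement for $\alpha(x)$ is just the $\omega$-statement applied to $f\inv$ (since $\alpha(x)$ for $f$ is the $\omega$-limit set of $x$ for $f\inv$, hyperbolicity is symmetric under $f\leftrightarrow f\inv$ with $E^s$ and $E^u$ interchanged, and $\unst{y}$ for $f$ equals $\stab{y}$ for $f\inv$), so I would only treat $\omega(x)$. The first step is to observe that $\omega(x)$ is in fact a single periodic orbit and that the forward orbit of $x$ converges to it. Since $\omega(x)$ is a nonempty finite set, and in the intended applications the forward orbit of $x$ has compact closure so that $\omega(x)$ is exactly its set of accumulation points, one has $d(f^n(x),\omega(x))\to 0$. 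Writing $p:=\#\omega(x)$, letting $s>0$ be the least distance between distinct points of $\omega(x)$, and choosing $\delta>0$ with $\delta<s/3$ small enough that $f$ maps the $\delta$-neighborhood of each $q_i\in\omega(x)$ into the $(s/3)$-neighborhood of $f(q_i)$, one finds that for large $n$ the point $f^n(x)$ lies within $\delta$ of a \emph{unique} $q_{j(n)}\in\omega(x)$, and continuity of $f$ with the separation $s$ forces $q_{j(n+1)}=f(q_{j(n)})$. Thus $n\mapsto j(n)$ traces a forward orbit of the permutation $\restrict f{\omega(x)}$; since each point of $\omega(x)$ is a genuine accumulation point of $\{f^n(x)\}$ it is visited infinitely often, so this sequence is purely periodic and runs through all of $\omega(x)$. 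Hence $\omega(x)=\{q_0,\dots,q_{p-1}\}$ is one periodic orbit, and the splitting $E^s\oplus E^u$ over $\omega(x)$ exhibits each $q_i$ as a hyperbolic fixed point of $g:=f^p$ (in particular an isolated, hence locally maximal, hyperbolic set).

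Next I would apply the local stable manifold theorem to the hyperbolic fixed point $q:=q_{j(N)}$ of $g$, for $N$ large: there is $\rho\in(0,\delta]$ such that any point whose entire forward $g$-orbit stays in $\overline{B(q,\rho)}$ lies in $\locStab[\rho]{q}$ for $g$ and is attracted to $q$ exponentially under $g$. Now $g^m(f^N(x))=f^{N+mp}(x)$ lies within $\delta$ of $\omega(x)$ for every $m\ge0$, and since $\restrict f{\omega(x)}$ is a single $p$-cycle we have $j(N+mp)=j(N)$, so $g^m(f^N(x))\in B(q,\delta)\subseteq\overline{B(q,\rho)}$ for all $m\ge0$; therefore $g^m(f^N(x))\to q$ exponentially fast.

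Finally I would transfer this back to $f$. For $n=mp+l$ with $0\le l<p$ one has $f^n(q)=f^l(q)$ and $d(f^n(f^N(x)),f^n(q))=d(f^l(g^m(f^N(x))),f^l(q))\le L\,d(g^m(f^N(x)),q)$, where $L$ is a common Lipschitz constant for $f^0,\dots,f^{p-1}$ on a fixed compact neighborhood of $\omega(x)$. Choosing $\delta$ additionally so small that $L\delta<\epsilon_0$, with $\epsilon_0$ the radius of the local stable and unstable manifolds of $\omega(x)$, this gives $d(f^n(f^N(x)),f^n(q))<\epsilon_0$ for all $n\ge0$, i.e. $f^N(x)\in\locStab[\epsilon_0]{q}\subseteq\stab{q}$. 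By equivariance of stable manifolds, $x\in f^{-N}(\stab{q})=\stab{f^{-N}(q)}$, and since $f^{-N}(q)\in\omega(x)$ this yields $x\in\stab{\omega(x)}$, as claimed. The only genuinely non-elementary ingredient is the local stable manifold theorem at a hyperbolic fixed point; I do not expect a real obstacle beyond that, but the first and last steps require some care in the bookkeeping — in particular in the choice of $\delta$ and in tracking the period $p$ when passing between $f$ and $g=f^p$.
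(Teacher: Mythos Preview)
Your argument is correct and takes a more direct route than the paper. The paper argues by contradiction: passing to an iterate $f^k$ fixing every point of $\omega(x)$, it picks $y\in\omega_{f^k}(x)$ and, assuming $x\notin\stab y$, observes that the $f^k$-orbit of $x$ must enter and exit a small neighborhood $U_y$ infinitely often; the exit points all lie in the compact set $\overline{f^k(U_y)\sm U_y}$, hence accumulate there, contradicting that $\omega_{f^k}(x)\subset\omega(x)$ is disjoint from that set. You instead establish the stronger preliminary fact that $\omega(x)$ is a \emph{single} periodic orbit which the forward orbit of $x$ genuinely shadows, and then feed this directly into the characterization of the local stable manifold. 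Both proofs ultimately rest on the same local stable manifold theorem; yours extracts more structure up front, while the paper's contradiction avoids ever naming the period $p$ or tracking residues.

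Two small points of bookkeeping (which you anticipated). First, the convergence $d(f^n(x),\omega(x))\to 0$ does not actually require precompactness of the forward orbit: it follows from finiteness of $\omega(x)$ alone, by the same exit-time trick the paper uses (if convergence failed, the first exits from a $\delta$-neighborhood of $\omega(x)$ would lie in a fixed compact annulus and accumulate outside $\omega(x)$). So your hedge ``in the intended applications'' can be dropped. Second, the inclusion $B(q,\delta)\subseteq\overline{B(q,\rho)}$ is backwards since you chose $\rho\le\delta$; the fix is simply to enlarge $N$ once $\rho$ is fixed so that $d(f^n(x),\omega(x))<\rho$ for all $n\ge N$, after which your tracking argument runs verbatim with $\rho$ in place of $\delta$.
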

\begin{proof}We need only prove the first statement.  Assume the contrary.  If $\omega(x)$ is finite, then there is some $k$, such that every point of $\omega(x)$ is fixed by $f^k$.  For each $y\in \omega(x)$ pick a precompact open neighborhood $U_y\subset B(y,\epsilon)$, where $\epsilon$ is the radius of the local stable manifolds for the dynamics of $f^k$, such that for $z,y\in \omega(x)$, $f^k(U_y) \cap U_z \neq \emptyset$ implies $z = y$.  

Let $y\in \omega_{f^k}(x)\subset \omega (x)$, where $\omega_{f^k}(x)$ is the $\omega$-limit set of $x$ under the dynamics of $f^k$.  If $x\notin W^s(y)$ then there exists an infinite subsequence of $\{f^{nk}(x)\}_{n\in \N}$ disjoint from  $U_y$, whereas $y\in \omega_{f^k}(x)$ implies there exists an infinite subsequence of $\{f^{nk}(x)\}_{n\in \N}$ completely contained in $U_y$.  
Thus we may conclude that there is an infinite subsequence  $\{f^{n_j k}(x)\}\subset \{f^{nk}(x)\}_{n\in \N}$ such that $f^{(n_j-1)k}(x) \in U_y$ and $ f^{n_j k}(x) \notin U_y$ for all $j\in \N$.  Since $f^k(U_y)\sm U_y$ is precompact and $\omega_{f^k}(x)$ is disjoint from $\overline{f^k(U_y)\sm U_y}$, the sequence $\{f^{n_j k}(x)\}$ contains an accumulation point in $\overline{f^k(U_y)\sm U_y}$  disjoint from  $\omega_{f^k}(x)$ contradicting the definition of $\omega_{f^k}(x)$.  
\end{proof}

Recall that a compact hyperbolic set $\Lambda$ is called \emph{locally maximal} if there exists an open set $\Lambda\subset V $ such that $\Lambda = \bigcap_{n\in \Z} f^n(V).$  
The following result is a basic corollary of the Anosov Closing Lemma (see e.g. \cite{Katok:1995p8357}).
 \begin{prop}\label{cor:PerDense}
Let $\Omega$ be a compact, nonwandering, hyperbolic set for a diffeomorphism $f\colon M\to M$ and let $\Per(f)$ be the set of periodic points for $f$.  Then $\Omega\subset \overline{\Per(f)}$.  In particular if $\Lambda$ is a locally maximal compact hyperbolic set, then the periodic points $\Per(\restrict f \Lambda)$ are dense in $\NW(\restrict f \Lambda)$.
\end{prop}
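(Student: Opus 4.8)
The plan is to derive both statements from the Anosov Closing Lemma, in the following pseudo-orbit form (see \cite{Katok:1995p8357}): since $\Omega$ is a compact hyperbolic set for $f$, there are an open neighborhood $V\supset\Omega$ and, for every $\delta>0$, an $\epsilon>0$ such that every periodic $\epsilon$-pseudo-orbit $x_0,x_1,\dots,x_m=x_0$ with all $x_i\in V$ is $\delta$-shadowed by a genuine periodic orbit of $f$ of period $m$ lying in the $\delta$-neighborhood of $\{x_i\}$. Note that a genuine orbit segment $y,f(y),\dots,f^m(y)$ contained in $V$ with $d(f^m(y),y)<\epsilon$ is such a pseudo-orbit.

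First I would establish the ``in particular'' clause. Assume $\Lambda=\bigcap_{n\in\Z}f^n(V_0)$ is locally maximal, shrink $V_0$ so that it also serves as the neighborhood $V$ above, and restrict attention to $\delta$ small enough that the closed $\delta$-neighborhood of $\Lambda$ lies in $V_0$. Fix $x\in\NW(\restrict f \Lambda)$ and such a $\delta>0$, with associated $\epsilon$. By definition of the nonwandering set of $\restrict f \Lambda$ there are $y\in\Lambda$ and $m>0$ with $d(x,y)<\min(\delta,\epsilon/2)$, $f^m(y)\in\Lambda$, and $d(f^m(y),x)<\min(\delta,\epsilon/2)$, whence $d(f^m(y),y)<\epsilon$; since $\Lambda$ is invariant the whole segment $y,\dots,f^m(y)$ lies in $\Lambda\subset V$. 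The Closing Lemma then gives a periodic $p$ of period $m$ whose orbit lies in the $\delta$-neighborhood of $\Lambda$, hence in $V_0$; being invariant, the orbit of $p$ lies in $\bigcap_{n\in\Z}f^n(V_0)=\Lambda$, so $p\in\Per(\restrict f \Lambda)$ and $d(p,x)\le d(p,y)+d(y,x)<2\delta$. Letting $\delta\to0$ gives $x\in\overline{\Per(\restrict f \Lambda)}$; since $\Per(\restrict f \Lambda)\subset\NW(\restrict f \Lambda)$ trivially, $\NW(\restrict f \Lambda)=\overline{\Per(\restrict f \Lambda)}$.

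For the first assertion, $\Omega\subset\overline{\Per(f)}$, one wants, for each $x\in\Omega$ and each $\delta>0$, a periodic $\epsilon$-pseudo-orbit through $x$ lying in $V$. When $\Omega$ is internally chain transitive --- in particular when $\Omega$ is an $\omega$- or $\alpha$-limit set, which is the case in the applications we have in mind --- this is immediate: for the $\epsilon$ associated to $\delta$ there is a periodic $\epsilon$-chain $x=x_0,x_1,\dots,x_m=x_0$ entirely inside $\Omega\subset V$, and shadowing it produces a periodic orbit of $f$ passing within $\delta$ of $x$, so $x\in\overline{\Per(f)}$. \textbf{The point requiring care}, and the reason the full statement is a genuine (rather than purely formal) corollary of the Closing Lemma, is the passage from $x\in\NW(f)$ to a returning pseudo-orbit near $x$ that remains inside the hyperbolic neighborhood $V$: the hypothesis $x\in\NW(f)$ only supplies $y$ near $x$ with $f^m(y)$ near $x$ for some $m>0$, and as $m$ is uncontrolled the intermediate orbit $f^k(y)$ may a priori leave $V$. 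I expect verifying that such returning segments can be taken inside $V$ to be the main obstacle in full generality; it is handled by the standard analysis of hyperbolic sets (see \cite{Katok:1995p8357}), and once it is in place the Closing Lemma finishes exactly as above, the remaining work being only the bookkeeping with the shadowing constants.
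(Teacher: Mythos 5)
The paper offers no proof of this proposition beyond the citation to Katok--Hasselblatt, so there is no internal argument to compare you against; the comparison is necessarily with the standard textbook proof. Your proof of the ``in particular'' clause is complete and is precisely that standard argument: because $\Lambda$ is invariant and $x\in\NW(\restrict f\Lambda)$, the nearly-returning point $y$ can be taken \emph{in} $\Lambda$, so the entire orbit segment $y,\dots,f^m(y)$ lies in $\Lambda\subset V$, the closing lemma applies, and local maximality forces the resulting periodic orbit into $\Lambda$. That part is fine.

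For the first assertion $\Omega\subset\overline{\Per(f)}$, you have correctly isolated the genuine difficulty, and I want to push back on how lightly you set it aside. The hypothesis $\Omega\subset\NW(f)$ concerns the \emph{ambient} dynamics: given $x\in\Omega$ it yields only some $y$ near $x$ with $f^m(y)$ near $x$, with $y$ a priori outside $\Omega$ and with no control on the intermediate segment $f(y),\dots,f^{m-1}(y)$, which may well leave the hyperbolic neighborhood $V$. The pseudo-orbit/shadowing form of the Anosov Closing Lemma that you quote (correctly) requires the whole periodic $\epsilon$-chain to lie in $V$, so the argument you write ``finishes exactly as above'' does \emph{not} go through as stated. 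Deferring this to ``the standard analysis of hyperbolic sets'' is the same hand-wave as the paper's ``basic corollary''; it is in fact the substantive step, not bookkeeping. You should either prove the intermediate claim that the returning segment can be chosen inside $V$ (equivalently, that $\Omega\subset\NW(\restrict f{\Lambda'})$ for the locally maximal enlargement $\Lambda'=\bigcap_n f^n(\overline V)$, after which your ``in particular'' argument applies verbatim), or else observe explicitly --- as your chain-transitivity remark already half-suggests --- that in every application the paper actually makes of the first assertion (to $\omega$-, $\alpha$-limit sets, their unions, or to a locally maximal $\Lambda$), one can route through the cases you \emph{do} prove; as written, the general case has a real gap that you have identified but not closed.
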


The following Lemma will be of use in the proofs of Theorems \ref{thm:main} and \ref{thm:LM} when considering case were $\dim E^\sigma (x) = 0$ for some $\sigma \in \{s,u\}$.
{\begin{lem}\label{lem:trivialsplitting}
Let $\Lambda\subset M$ be a  compact hyperbolic set for a diffeomorphism $g\colon M\to M$. The  points $y \in \Lambda$ with the property that $E^\sigma(y) = \{0\}$ for some $\sigma \in \{s,u\}$ are periodic and isolated in $\Lambda$.  In particular there are no such points when $\Lambda$ is perfect.  
\end{lem}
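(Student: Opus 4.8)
The plan is to isolate the set $\Lambda':=\{x\in\Lambda : E^\sigma(x)=\{0\}\}$ and show it is finite; everything then follows, since a finite $g$-invariant set consists of periodic points, and — once we know $\Lambda'$ is open in $\Lambda$ — a finite subset of $\Lambda$ that is open in $\Lambda$ has all of its points isolated in $\Lambda$, so $\Lambda'$ must be empty whenever $\Lambda$ is perfect. By passing to $g^{-1}$, which interchanges $E^s$ and $E^u$ and for which $\Lambda$ remains hyperbolic, it suffices to treat $\sigma=u$.

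First I would check that $\Lambda'$ is clopen in $\Lambda$ and that $g(\Lambda')=\Lambda'$. Since $x\mapsto E^u(x)$ is a continuous section of the Grassmann bundle of $TM$ over $\Lambda$, the fiber dimension is locally constant, so $\{x:\dim E^u(x)=0\}=\Lambda'$ is open and closed in $\Lambda$; since $Dg_x$ maps $E^u(x)$ isomorphically onto $E^u(g(x))$ and $g$ restricts to a homeomorphism of $\Lambda$, we get $g(\Lambda')=\Lambda'$. Thus $\Lambda'$ is a compact hyperbolic set on which $E^s\equiv TM$. Using an adapted metric, the hyperbolicity estimate with $C=1$ and $n=1$ gives $\|Dg_x\|\le\kappa<1$ for all $x\in\Lambda'$, so by continuity of $Dg$ there are a neighborhood $W\supseteq\Lambda'$ and a constant $\kappa'\in(\kappa,1)$ with $\|Dg_z\|\le\kappa'$ on $W$, and then an $\eta>0$ such that any $x,y\in\Lambda'$ with $d(x,y)<\eta$ are joined by a minimizing geodesic inside $W$; for such pairs $d(g^nx,g^ny)\le(\kappa')^n d(x,y)$ for all $n\ge0$.

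In particular any two points of $\Lambda'$ within $\eta$ are forward asymptotic, so the equivalence relation $x\sim y\iff d(g^nx,g^ny)\to0$ has classes containing an $\eta$-ball in $\Lambda'$ about each of their points; hence the classes are open, so clopen, so finite in number, $\Lambda'=K_1\sqcup\cdots\sqcup K_m$, and $g$ permutes them (it preserves $\sim$ and is a bijection of $\Lambda'$). Choosing $p$ with $g^p(K_i)=K_i$ for every $i$ and writing $h=g^p|_{K}$ for one class $K:=K_i$, I would finish by showing $K$ is a single point: for $0<\delta\le\eta$ the sets $U_n:=\{(x,y)\in K\times K:d(h^nx,h^ny)<\delta\}$ are open, nested increasing in $n$ (by the contraction estimate, which applies since $\delta\le\eta$), and exhaust $K\times K$ (all points of $K$ are forward $h$-asymptotic), so by compactness $U_N=K\times K$ for some $N$; since $h$ is a bijection of $K$ this says $\mathrm{diam}(K)=\mathrm{diam}(h^N(K))<\delta$, and letting $\delta\to0$ forces $|K|=1$. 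Hence $\Lambda'$ is finite, completing the proof.

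The step I expect to be the genuine obstacle is this last one. A compact set carrying a uniformly contracting invertible map need not be finite unless one really uses $g(\Lambda')=\Lambda'$: with only forward invariance, $x\mapsto x/2$ near $0$ already produces an infinite contracting invariant set (a convergent sequence) with a non-isolated fixed point, so that hypothesis — implicit in "$\Lambda$ hyperbolic for $g$" — cannot be dropped. Even with $g(\Lambda')=\Lambda'$ in hand one must still rule out infinite contracting invariant sets, and the device that does this is the two-step reduction above: collapse first to a single forward-asymptotic class, then exploit $h^N(K)=K$ to push the diameter to $0$. The one point to watch throughout is that the contraction estimate is only available for pairs already within $\eta$, which is exactly why the nesting of the $U_n$ requires $\delta\le\eta$.
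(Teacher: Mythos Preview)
Your proof is correct and takes a genuinely different route from the paper's. The paper fixes a single point $y$ with $E^s(y)=\{0\}$, observes that the local unstable manifold $W^u_\epsilon(p)$ of any periodic point $p\in\omega(y)$ is then a full-dimensional open set, and shows directly that $y=p$ by tracking the backward orbit; isolation is then argued by noting that a nearby point of $\Lambda$ would lie in $W^u_\epsilon(p)$, forcing its backward orbit to accumulate on $p$ and preventing periodicity. Your argument instead treats the whole set $\Lambda'$ at once: you use that $g|_{\Lambda'}$ is a uniform contraction in the adapted metric, extend this to a neighborhood, partition $\Lambda'$ into finitely many forward-asymptotic classes, and use the bijectivity $h^N(K)=K$ to force each class to a point. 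What your approach buys is that it is essentially self-contained---it needs neither the stable/unstable manifold theorem nor the fact that $\omega$-limit sets in hyperbolic sets contain periodic points (the paper's proof quietly invokes Proposition~\ref{cor:PerDense} to produce $p$). What the paper's approach buys is brevity and a more dynamical flavor: once the invariant-manifold machinery is already on the table, the argument is just a few lines. Your closing remark is well taken: the surjectivity $g(\Lambda')=\Lambda'$ is exactly what rules out the convergent-sequence counterexample, and it is the place where the hypothesis that $\Lambda$ is a hyperbolic \emph{set} (fully invariant) rather than merely forward-invariant is doing work.
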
}
\begin{proof}
Let $y$ be such a point. 
By passing to the inverse if necessary, we may assume that $\dim E^u(y) = n$ (where $\dim M = n$).  Then $\omega(y) \subset \Lambda$ contains a periodic point $p$ such that $\dim E^u(p) = n$.  Pass to an iterate  so that  $p$ is fixed by $g$.  
For any $0<\epsilon<\epsilon_0$, 
where $\epsilon_0$ is the radius of local unstable manifolds for $g$, there is some $k$ such that $g^k(y) \in \locUnst p$.  However, the definition of $\locUnst p$ implies thus that $y = g^{-k} (g^k(y)) \in \locUnst p$ for every $\epsilon>0$, 
hence $p = y$.  

To see that such points are isolated let $p\in \Lambda$ be such that $\dim E^u(p) = n$.  Then $p$ is periodic by the above.   Assume that for any $\epsilon>0$  the set $\locUnst p \cap \Lambda$ contains a point $x$ distinct from $p$.  By taking $\epsilon$ sufficiently small we may assume that $\dim E^u(x) = n$.  Clearly any such $x$ can not be periodic as $\{g^{-k}(x)\}$ contains points arbitrarily close to $p$, hence an infinite number of points.  This  contradicts that $x$ must be periodic by the above result.   
\end{proof}

 
For compact  hyperbolic sets, local maximality is equivalent to the existence of a 
\emph{local product structure} \cite{Katok:1995p8357}.  We recall the following definition.
\begin{defn}\label{def:LPS} A  hyperbolic set $\Lambda$ is said to have \emph{  local product structure} if there is some $\epsilon>0$ and $0<\delta< \epsilon$ such that for every $x\in \Lambda$, the map $$\phi\colon (\locUnst [\delta] x \cap \Lambda)  \times (\locStab [\delta] x \cap \Lambda)\to \Lambda$$ given by $$\phi\colon (y,z) \mapsto \locStab y \cap \locUnst z$$ is well defined and  maps its domain homeomorphically onto its image.  
\end{defn}
For any $0< \eta\le \delta$ we may find some open set $V$ such that $V\cap \Lambda = \phi(  (\locUnst  [\eta] x \cap \Lambda)  \times (\locStab [\eta] x \cap \Lambda) )$.  

\begin{defn}\label{def:LPC}
Given any  $\eta\le \delta$ and any open set $V$ such that $$V\cap \Lambda = \phi(  (\locUnst  [\eta] x \cap \Lambda)  \times (\locStab [\eta] x \cap \Lambda) )$$we call the pair $(V,\eta)$ a \emph{local product chart} of radius $\eta$ centered at $x$.  
\end{defn}
Recall that given a compact hyperbolic set $\Lambda$, we may always find  $\delta>0$ and $\epsilon>0$ such that $\locUnst x \cap \locStab y$ is a singleton whenever $x,y\in \Lambda$ are such that $d(x,y)< \delta$.  Thus, a local product structure simply asserts that this intersection is contained in $\Lambda$. 

For a hyperbolic set exhibiting a local product structure we define a \emph{canonical isomorphism} (also called a $u$-projection, or $u$-holonomy) between subsets of stable  manifolds.  

\begin{defn}[Canonical Isomorphism]\label{def:CI}  Let $(V,\eta)$ be a local product chart centered at $x$.  Let $x'\in \locUnst  [\eta] x$, and let $D\subset \locStab  [\eta] x$, $D'\subset \locStab {x'}$.  Then $D$ and $D'$ are said to be \emph{canonically isomorphic} 
if $y\in D\cap \Lambda$  implies $D'\cap \locUnst y \neq \emptyset$ and $y'\in D'\cap \Lambda$ implies $D \cap \locUnst {y'} \neq \emptyset$. 

 We similarly define a canonical isomorphism between subsets of local unstable manifolds.  
\end{defn}

 Given a locally maximal, compact hyperbolic set  $\Lambda$, there are various partitions of the set of nonwandering points known as the \emph{spectral decomposition} of $\Lambda$ (see e.g. \cite{Katok:1995p8357}, \cite{Smale:DDS}).  For our purposes, the spectral decomposition asserts that  there exist an $n>0$ and a partition  $\NW(\restrict f \Lambda) =\Omega_1\cup \dots\cup \Omega_l$ into a finite number of disjoint closed sets $\Omega_k$, such that each $\Omega_k$ is a  compact topologically  mixing set for $f^n$.  The spectral decomposition implies the following.

\begin{prop}\label{cor:topologically mixingAttractor}
Let $\Lambda$ be a compact, hyperbolic attractor, possibly containing wandering points.  Then $\Lambda$ contains a topologically mixing hyperbolic attractor for some iterate of $f$.
\end{prop}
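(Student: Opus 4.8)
The plan is to reduce the statement to the classical structure theory of locally maximal hyperbolic sets. First note that a hyperbolic attractor $\Lambda$ is automatically locally maximal: if $V$ is open with $\bigcap_{m\ge 0}f^m(V)=\Lambda$, then, since $\Lambda=f(\Lambda)$ gives $\Lambda\subseteq f^m(V)$ for every $m\in\Z$, we have $\Lambda=\bigcap_{m\in\Z}f^m(V)$. Hence the spectral decomposition applies: there is an $n>0$ and a partition $\NW(\restrict f\Lambda)=\Omega_1\cup\cdots\cup\Omega_l$ into disjoint compact sets, each topologically mixing and invariant for $g:=f^n$. Because the global unstable manifold $W^u(x)$ of a point is the same set for $f$ and for any iterate, and $W^u(x)\subseteq\Lambda$ for all $x\in\Lambda$, the criterion quoted above shows that $\Lambda$ is also a hyperbolic attractor for $g$. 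So it suffices to exhibit inside $\Lambda$ a topologically mixing hyperbolic attractor for some iterate of $g$.

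Shrinking $V$ we may assume $\overline{g(V)}\subseteq V$ and $\bigcap_{m\ge 0}g^m(V)=\Lambda$. A routine argument shows $\NW(\restrict g{\overline V})\subseteq\Lambda$: the forward orbit of any point of $\overline V$ stays in $\overline V$ and limits on $\Lambda$, while if $x\in\overline V\setminus\Lambda$ the backward orbit of $x$ eventually leaves $\overline V$ for good, so a small neighborhood of $x$ witnesses that $x$ is wandering for $\restrict g{\overline V}$. Hence $\NW(\restrict g{\overline V})$ is a hyperbolic set, and by the filtration theorem for Axiom A systems (see e.g.\ \cite{Smale:DDS}) there is a filtration $\emptyset=V_0\subseteq V_1\subseteq\cdots\subseteq\overline V$ by compact sets with $g(V_j)\subseteq\mathrm{int}(V_j)$ such that $B:=\bigcap_{m\ge 0}g^m(V_1)$ is a basic set of $\NW(\restrict g{\overline V})$. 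Since $g(V_1)\subseteq\mathrm{int}(V_1)$, the set $\mathrm{int}(V_1)$ is a trapping neighborhood, so $B$ is a transitive hyperbolic attractor for $g$; being $g$-invariant and contained in $\overline V$ it lies in $\Lambda$. Finally, apply the spectral decomposition to $B$: write $B=B_1\cup\cdots\cup B_q$ into disjoint compact — hence clopen in $B$ — sets cyclically permuted by $g$, each topologically mixing for an iterate $g^{n'}$. For $x\in B_i$ the connected set $W^u(x)$ is contained in $B$ (as $B$ is a $g$-attractor) and meets $B_i$, hence $W^u(x)\subseteq B_i$; as this is also the unstable manifold of $x$ for $g^{n'}$, the criterion shows $B_i$ is a topologically mixing hyperbolic attractor for $g^{n'}=f^{nn'}$. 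Since $B_i\subseteq\Lambda$, this is the required set.

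The step carrying the real content, and the one I expect to be the main obstacle, is producing the attracting basic set $B$. A purely ``soft'' argument is tempting but insufficient: one may put the standard partial order $\Omega_i\ge\Omega_j$ if $W^u(\Omega_i)\cap W^s(\Omega_j)\neq\emptyset$ on the basic sets, pick a minimal $\Omega$, and try to verify that $W^u(x)\subseteq\Omega$ for $x\in\Omega$. One does obtain that every $y\in W^u(\Omega)$ has both $\alpha_g(y)$ and $\omega_g(y)$ contained in $\Omega$ (the $\omega$-limit set lies in a single basic set, and minimality of $\Omega$ rules out any other), but this does not force $y\in\Omega$: an orbit asymptotic to $\Omega$ in both time directions can still make a bounded excursion out of any fixed neighborhood of $\Omega$, and excluding this requires an honest forward-trapped neighborhood rather than mere limit-set information — which is exactly what the filtration supplies. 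Alternatively one can build the trapping neighborhood by hand, using that $\overline{W^u(\Omega)}$ is saturated by local unstable manifolds together with $\dim E^s+\dim E^u=\dim M$, so that the union of the local stable manifolds through the points of $\overline{W^u(\Omega)}$ is a neighborhood of it which $g$ maps into itself.
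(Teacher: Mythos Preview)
Your argument is essentially correct, but two points deserve comment. First, the filtration theorem you invoke requires the no-cycle condition, not merely Axiom~A; here it holds because $\Lambda$ contains full unstable manifolds, so every heteroclinic intersection $W^u(x)\cap W^s(y)$ with $x,y\in\Lambda$ lies in $\Lambda$ and is therefore transverse---this is Axiom~B on the basin, and by Smale's Proposition~8.5 it rules out cycles. You should say so.

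Second, and more interestingly, the partial-order approach you call ``tempting but insufficient'' is exactly the paper's proof. Your objection---that for an extremal $\Omega$ and $y\in W^u(\Omega)\setminus\Omega$, knowing $\alpha(y),\omega(y)\subset\Omega$ does not force $y\in\Omega$---is correct as stated, but the paper closes it with one additional step. If such a $y$ is nonwandering then $y$ lies in some other $\Omega_k$, contradicting extremality directly. If $y$ is wandering, pick a neighborhood $U\supset W^u_\epsilon(y)$ with $f^m(U)\cap U=\emptyset$ for all $m\neq 0$; since $\omega(y)$ meets $\Omega$, the forward images of $W^u_\epsilon(y)$ accumulate (via uniform hyperbolicity and the mixing of $\Omega$) on all of $W^u(\Omega)\ni y$, forcing $f^m(U)\cap U\neq\emptyset$ after all. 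So the soft argument does work once supplemented by this wandering-neighborhood step, and it is lighter than your filtration detour---which, incidentally, needs the very same Axiom~B/no-cycle verification to get started.
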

\begin{proof}
Note that $\Lambda$ is locally maximal.  If $B$ denotes the basin of $\Lambda$ then $\NW(\restrict f B) \subset \Lambda$, hence $\restrict f B$ satisfies Smale's Axiom A. Let $\{\Omega_1, \dots , \Omega_l\}$ be the spectral decomposition of $\NW(\restrict f \Lambda)= \NW (\restrict f B)$, and let $n$ be such that each $\Omega_j$ is topologically mixing for $f^n$.  We define a relation on the $\Omega_k$ by $\Omega_j\gg\Omega_k$ if {$W^u(\Omega_j )\cap W^s(\Omega_k) \neq \emptyset$}. 
Note that $\Lambda$ contains full unstable manifolds, hence  if $x,y \in \Lambda$, then $W^u(x)\cap W^s(y)\subset \Lambda$ which  by hyperbolicity implies the intersection $W^u(x)\cap W^s(y)$ is transverse if nonempty.  Thus $f$ restricted to the basin of $\Lambda$ satisfies Smale's Axiom B  (see \cite{Smale:DDS}, 6.4).  By Proposition 8.5  of \cite{Smale:DDS},  $\gg$ is  a partial ordering.

If some $\Omega_j$ is not an attractor for $f^n$ then its unstable saturation {$W^u(\Omega_j)\subset \Lambda$} contains wandering points for $f$ (hence also for $f^n$).  Let $x\in W^u(\Omega_j)$ have a wandering neighborhood $U$ (that is $f^k(U) \cap U = \emptyset$ for all $k\in \Z\sm\{0\}$) and let $\epsilon>0$ be such that $W^u_\epsilon(x) \subset U$.  Now $\omega(x)\subset \NW(\restrict f\Lambda)$, thus there is some $k$ and $m$ such that $\locUnst{f^{ m n}(x)}\cap W^s(\Omega_k) \neq \emptyset$ and hence $W^u_\epsilon(x) \cap \stab{\Omega_k}\neq \emptyset$. Note that the continuity of the local unstable manifolds  
and uniform hyperbolicity imply that $$\unst{\Omega_k}\subset \overline{\bigcup_{m>0} f^m(\locUnst{ x})}.$$ Thus for $x$ to be a wandering point we must have  $k\neq j$.  Indeed otherwise we would have \[x\in \unst{\Omega_j}\subset \overline{\bigcup_{m>0} f^m(\locUnst{ x})} \subset \overline{\bigcup_{m>0} f^m(U)}\] from which we see that $U\cap f^m (U) \neq \emptyset$ for some $m$.

 Hence if $\Omega_j$ is not a mixing attractor for $f^n$ then $\Omega_j\gg \Omega_k$ for some $k\neq j$.  Since $\gg$ is a partial ordering of a finite set, we may find a maximal element $\Omega_{M}$ in  $\{\Omega_1, \dots , \Omega_l\}$.  (Recall that an element $z \in S$ of a partially ordered set $(S, \le )$ is called a \emph{maximal element} if there is no $w\in S\sm \{z\}$ such that $z\le w$.)  
Any such a $\Omega_{M}$ is thus a topologically mixing attractor for $f^n$. \end{proof}

We will deal with the intersection of two $C^1$ submanifolds whose dimensions may not be complementary.  Given two subspaces $U,W \subset T_x (M)$ we will say that $U$ and $W$ are   in \emph{general position}  if $\dim( U+ W) = \min\{m, \dim U + \dim W\}$ where $m= \dim M$.  We then say that two smooth submanifolds $S, N$  of an $m$-dimensional manifold intersect \emph{transversally at $x\in S\cap N$} if $T_xS $ and $ T_xN$ are in general position.  

We will need the Lefschetz Fixed Point Theorem in our argument (see \cite{Katok:1995p8357} for more discussion).  Recall that a continuous map on a compact manifold $f\colon M\to M$ induces a linear map $f_{*,k}\colon H_k(M, \Q)\to H_k(M, \Q)$ and that the \emph{Lefschetz number} of $f$ is defined to be $$L(f) := \sum_{k = 0}^{\dim M} (-1)^k \tr f_{*,k}.$$
Given an isolated fixed point $p$, suppose that the unit sphere $S$ centered at $p$ (in some local Euclidean neighborhood of $p$) isolates $p$ from all other fixed points.  Then the \emph{fixed point index} of $p$, denoted $I_f(p)$, is defined to be the degree of the map $$\dfrac{\Id - f}{\|\Id - f\|}\colon S \to S.$$
The Lefschetz Fixed Point Theorem ensures that for a map $f\colon M\to M$ containing only isolated fixed points we have $$L(f) = \sum _{\{x \mid f(x) = x\}} I_f(x).$$


\section{Proof of  Theorem \ref{thm:main} and Theorem \ref{thm:main2}.}

In what follows we assume that $\Lambda$ is a compact, topologically mixing,  expanding hyperbolic attractor of  codimension 1   for some 
 diffeomorphism $f\colon M \to M$ where $M$ is an $(n+1)$-dimensional manifold and $\dim E^u(x) = n$ for all $x\in \Lambda$.  Note that this implies $\Lambda$ is connected.  Consider a second 
 diffeomorphism $g\colon M\to M$ for which  $\Lambda$ is a hyperbolic set. 
Note that we make no assumptions on the dimensions of the hyperbolic splitting for $g$, nor do we assume that either the stable or the unstable manifold for $g$ is tangent to the leaves of $\Lambda$.  

Unless necessary we will suppress mention of $f$ in what follows.  For $\sigma \in \{s,u \}$ and $x\in \Lambda$ denote by $E^\sigma (x)$ the invariant $\sigma$-subspaces for $g$, and by $W^\sigma_\epsilon( x)$, and $W^\sigma (x)$ the local and global $\sigma$-manifolds for $g$.  When necessary we will write $W^\sigma_f(x)$ for the corresponding global manifolds for $f$.  For $x\in \Lambda$ denote by $\Lf(x)$  the path-connected component or \emph{leaf} of $\Lambda$ containing $x$.  Note that for all $x\in \Lambda$, $ \Lf(x)= W^u_f(x)$ is the unstable manifold for the dynamics of $f$, hence  is homeomorphic to $\R^n$ and dense in $\Lambda$.  
We will adopt the metric adapted to the dynamics of  $g$ on $\Lambda$ and let $\epsilon _0$ be the radius of the local stable and unstable manifolds in the adapted metric.  

Note that by Lemma \ref{lem:trivialsplitting} we may assume that both the hyperbolic distributions are nontrivial under the new dynamics $g$.  That is $\dim E^\sigma (x) \ge 1$ for all $x\in \Lambda$ and $\sigma\in \{s,u\}$.  

To  prove Theorem \ref{thm:main} we will first show that $\Lambda$ exhibits a local product structure under the dynamics of  $g$.  In particular, we will show that only one of the invariant distributions $E^\sigma(x)$ is in general position with respect to  $T_x \Lambda$.  (By $T_x \Lambda$ we mean the $n$-dimensional subspace of $T_xM $ tangent to the leaf of $\Lambda$ through $x$.) This will establish that $\Lambda$ is either an attractor or a repeller.  We will then show that each leaf of $\Lambda$ is either expanding or contracting, and thus obtain the dimension of the hyperbolic splitting and as a consequence derive  that $\Lambda$ is topologically mixing for $g$.  


\subsection{Uniformity of the Transverse Dynamics.}
We first establish uniform behavior of $g$ transverse to $\Lambda$.  
\begin{prop}\label{thm:UnfTrans}
For $\Lambda$ and $g$ as in the hypothesis of Theorem \ref{thm:main}, 
either $E^u(x) \subset T_x\Lambda$ for all $x\in \Lambda$, or   $E^s(x) \subset T_x\Lambda$ for all $x\in \Lambda$.  
\end{prop}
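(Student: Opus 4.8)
The plan is to study the action of $Dg$ on the $1$-dimensional bundle normal to the leaves of $\Lambda$ and to show that this action is uniformly hyperbolic, which forces one of $E^s,E^u$ to lie inside $T\Lambda$ everywhere. Since $g$ is a homeomorphism of $M$ carrying $\Lambda$ into itself, it maps path-components of $\Lambda$ into path-components; as $\dim\Lf(x)=n$ is constant, $Dg_x$ maps $T_x\Lambda$ isomorphically onto $T_{g(x)}\Lambda$. Thus $T\Lambda$ is a continuous, $Dg$-invariant subbundle of $\restrict{TM}{\Lambda}$ of corank $1$, and passing to the quotient line bundle $\nu:=\restrict{TM}{\Lambda}/T\Lambda$ (with its quotient metric, so $\|\pi\|\le 1$ for the projection $\pi\colon\restrict{TM}{\Lambda}\to\nu$) we obtain an invariant cocycle $\bar g$ with $\bar g\circ\pi=\pi\circ Dg$. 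Put
\[A:=\{x\in\Lambda : E^s(x)\subseteq T_x\Lambda\},\qquad B:=\{x\in\Lambda : E^u(x)\subseteq T_x\Lambda\}.\]
Both are $g$-invariant and closed (the dimension of the intersection of two continuous subbundles is upper semicontinuous), and $A\cap B=\emptyset$ because $E^s(x)\oplus E^u(x)=T_xM$ cannot sit inside the $n$-dimensional space $T_x\Lambda$. Moreover $\pi(E^s(x))+\pi(E^u(x))=\nu_x$ at every point, so at each $x$ at least one of the $E^\sigma(x)$ projects onto $\nu_x$. The Proposition now reduces to the claim $A\cup B=\Lambda$: granting it, $A$ and $B$ are disjoint closed sets covering the connected set $\Lambda$, so one of them is empty and the other is $\Lambda$, which is exactly the asserted dichotomy.

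To prove the claim I first record two estimates, working in a metric adapted to $g$. If $x\notin A$ then $\restrict\pi{E^s(x)}$ is onto, and every $v\in\nu_x$ has a lift $w\in E^s(x)$ with $\|w\|=\|v\|/\|\restrict\pi{E^s(x)}\|$; since $\bar g^n_x v=\pi(Dg^n_x w)$ with $Dg^n_x w\in E^s(g^n x)$ and $\|Dg^n_x w\|\le\lambda^n\|w\|$, we get $\|\bar g^n_x\|\le\lambda^n/\|\restrict\pi{E^s(x)}\|\to 0$. Symmetrically, if $x\notin B$ then $\|\bar g^{-n}_x\|\le\mu^{-n}/\|\restrict\pi{E^u(x)}\|\to 0$. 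It follows that no ergodic $g$-invariant probability measure $m$ can satisfy $m(A\cup B)=0$: for such $m$ almost every point avoids $A$ and $B$, and since $\nu$ is a line bundle the Birkhoff averages $\tfrac1n\log\|\bar g^n_x\|$ and $\tfrac1n\log\|\bar g^{-n}_x\|$ converge $m$-a.e.\ to $\chi$ and $-\chi$ respectively, where $\chi:=\int\log\|\bar g\|\,dm$; the two estimates then give $\chi\le\log\lambda<0$ and $-\chi\le-\log\mu$, i.e.\ $\chi\ge\log\mu>0$, a contradiction. Hence the open, $g$-invariant set $N:=\Lambda\sm(A\cup B)$ supports no $g$-invariant probability measure.

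It remains to deduce $N=\emptyset$. For $x\in N$ the two estimates give $\|\bar g^{\pm n}_x\|\to 0$; combined with the facts that $\omega(x),\alpha(x)\subseteq\NW(g)$ are nonwandering hyperbolic sets — on which $\Per(g)$ is dense by Proposition \ref{cor:PerDense} and which, by the previous paragraph, can only support measures living in $A\cup B$ — one finds $\omega(x)\subseteq B$ and $\alpha(x)\subseteq A$, so in particular $A,B\neq\emptyset$. The contradiction then has to come from the global topology of $\Lambda$: every leaf $\Lf(x)=W^u_f(x)$ is dense in $\Lambda$, so a single leaf meets both $A$ and $B$, and $\Lambda$ carries, from its $f$-attractor structure, a uniform local product structure modeled on $\R^n\times(\text{Cantor set})$ with one-dimensional $f$-stable transversals; exploiting this rigidity to trap a recurrent $g$-orbit inside $N$ produces a $g$-invariant measure on $N$, contradicting the previous paragraph, so $N=\emptyset$.

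The reduction in the first two paragraphs is routine; the main obstacle is this last step — ruling out a ``mixed'' region $N$ — because the infinitesimal hyperbolic estimates only control $\bar g$ along single orbits and are fully consistent with heteroclinic-type normal behaviour (orbits running from a set $A$ where $\nu$ is expanded to a set $B$ where $\nu$ is contracted). One genuinely needs the rigidity of the ambient set — that $\Lambda$ has no isolated points, that every $f$-unstable leaf $W^u_f(\cdot)$ (which fills $T\Lambda$) is dense in $\Lambda$, and that $\Lambda$ has a uniform local product structure under $f$ — to force a $g$-orbit to recur inside $N$, and hence to produce the invariant measure that contradicts the estimates of the second paragraph.
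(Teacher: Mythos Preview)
Your normal-bundle setup is natural and the first two paragraphs are essentially correct: the Lyapunov-exponent dichotomy for the one-dimensional cocycle $\bar g$ does force every $g$-ergodic measure onto $A\cup B$, so $N=\Lambda\sm(A\cup B)$ carries no $g$-invariant probability, and one can indeed show (this is the paper's Lemma \ref{lem:alphaOmega}, proved geometrically rather than via your measure argument) that $x\in N$ implies $\omega(x)\subset B$ and $\alpha(x)\subset A$. But this is where the argument stops being a proof. Your own analysis shows that orbits in $N$ are heteroclinic from $A$ to $B$ --- there is \emph{no} recurrence in $N$ --- so the proposed mechanism of ``trapping a recurrent $g$-orbit inside $N$'' is inconsistent with what you have already established. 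The absence of invariant measures on an open invariant set is perfectly compatible with that set being nonempty (the wandering set of any Axiom A system is such an example), and nothing in your orbitwise cocycle estimates distinguishes the present situation from that one. Your final paragraph names the obstacle but does not overcome it.

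The paper closes this gap by a genuinely different device. Instead of the infinitesimal cocycle $\bar g$, it uses a \emph{finite} transverse invariant: the Ruelle--Sullivan disintegration $\mu^s$ of the measure of maximal entropy for $f$ assigns a positive, non-atomic mass $\nu(\gamma)$ to every curve transverse to the lamination, and this mass depends only on the canonical-isomorphism class of the transversal. Since $g$ maps leaves to leaves it preserves canonical isomorphism classes (Lemma \ref{lem:chart}), so $\nu(g^n\gamma_1)=\nu(g^n\gamma_2)$ for all $n$ whenever $\gamma_1,\gamma_2$ are canonically isomorphic. From periodic points $q\in A$ and $p\in B$ (whose existence you also obtained) one takes a short $\gamma_q\subset W^u_\epsilon(q)$ and, using density of the leaf through $q$, a canonically isomorphic $\gamma_p\subset W^s_\epsilon(p)$; then $\nu(g^n\gamma_q)$ stays bounded away from $0$ while $\nu(g^n\gamma_p)\to 0$, a contradiction. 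The transverse measure lets one compare the normal dynamics near $p$ and near $q$ \emph{directly}, via the global holonomy of the $f$-lamination, without ever needing to control what $g$ does on the heteroclinic region $N$ --- which is exactly what your cocycle approach cannot do.
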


Since $\Lambda$ is connected and the distributions $ \restrict {E^\sigma} \Lambda$ are continuous, Proposition \ref{thm:UnfTrans} is equivalent to the statement that there are no points $x\in \Lambda$ such that both subspaces $E^s(x)$ and $E^u(x)$ are in general position with respect to $T_x\Lambda$.
To prove Proposition \ref{thm:UnfTrans}, we assume such a point exists and derive a contradiction.  First, we examine the limit sets of any such point.  

\begin{lem} \label{lem:alphaOmega} 
If $E^s(x)$  is in general position with respect to $T_x\Lambda$, then for $y \in \omega (x)$ we have $\unst y \subset \Lambda$.  Similarly, if  $E^u(x)$ is in general position with respect to $T_x\Lambda$, then for  $ y'\in \alpha (x)$ we have  $\stab {y'} \subset \Lambda$.
\end{lem}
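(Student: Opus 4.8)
The plan is to prove the first statement (the second follows by replacing $g$ with $g^{-1}$, which swaps the roles of $\alpha$ and $\omega$ and of stable and unstable). So suppose $E^s(x)$ is in general position with respect to $T_x\Lambda$, and fix $y\in\omega(x)$. Since $\Lambda$ is an expanding attractor of codimension $1$ for $f$, it is locally the product of a copy of $\R^n$ (a piece of an unstable leaf $W^u_f$) and a Cantor set; in particular the unstable manifold $W^u(y)$ for $g$, if it meets $\Lambda$ in a set of positive ``transverse'' dimension, must actually lie inside $\Lambda$. The first step is therefore to reduce the statement ``$\unst y\subset\Lambda$'' to a local statement: it suffices to show that for some small $\delta>0$ the local unstable manifold $W^u_\delta(y)$ (for $g$) is contained in $\Lambda$, since $\Lambda$ is $g$-invariant and $\unst y=\bigcup_m g^m(W^u_\delta(g^{-m}y))$, and each $g^{-m}y$ also lies in $\omega(x)$ (as $\omega(x)$ is invariant), so the local statement applies along the whole backward orbit.

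The heart of the argument is the local statement, and the key mechanism is the following. Because $E^s(x)$ is in general position with respect to $T_x\Lambda$ and these distributions are continuous, the same holds on a neighborhood of $x$ in $\Lambda$; by $\lambda$-contraction of $W^s_g$ and $\mu^{-1}$-expansion of $W^u_g$, forward iterates of a small disk $D\subset\Lambda$ through $x$ inside a leaf $\Lf(x)=W^u_f(x)$ get stretched in the $g$-unstable direction while being compressed transversally. I would pick points $y_j=f^{-?}$—more precisely, choose a subsequence $n_j$ with $f^{n_j}(x)\to y$ (this is where $y\in\omega(x)$ enters), and examine the images $f^{n_j}(D)$. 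Each $f^{n_j}(D)$ is a piece of the leaf $\Lf(x)$, hence contained in $\Lambda$, of uniform size (by bounded distortion / the expanding-attractor structure, leaves through nearby points look the same). Now one shows that, because $E^s(x)$ meets $T_x\Lambda$ transversally, a definite-size neighborhood of $f^{n_j}(x)$ inside $W^u_g(f^{n_j}(x))$ is covered by $f^{n_j}(D)$ up to $g$-stable holonomy; passing to the limit $j\to\infty$, using continuity of the $g$-local unstable manifolds and closedness of $\Lambda$, one concludes $W^u_\delta(y)\cap(\text{a transversal})$ accumulates on all of a $\delta$-disk, so $W^u_\delta(y)\subset\Lambda$.

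The main obstacle I anticipate is making rigorous the claim that transversality of $E^s(x)$ to $T_x\Lambda$ forces the $g$-unstable manifold of a limit point to be ``filled in'' by $\Lambda$. The subtlety is that a priori $E^u_g$ and $E^s_g$ need not be related to the $f$-splitting at all, and $\dim E^u_g(x)$ could be anything between $1$ and $n+1$; one must track how the $n$-dimensional tangent space $T_x\Lambda$ sits relative to the $g$-splitting and argue that the $g$-unstable direction has a component along $T_x\Lambda$ that survives iteration. I would handle this by working in a local product chart for $f$ (identifying a neighborhood of $x$ in $\Lambda$ with $\R^n\times\mathcal C$ for a Cantor set $\mathcal C$) and using the cone-field characterization of $E^u_g$: a cone around $E^u_g$ is forward-invariant under $Dg$, so iterating a tangent vector to $\Lambda$ that has nonzero $E^u_g$-component (guaranteed by general position) it converges to $E^u_g$; combined with uniform size of the leaf pieces $f^{n_j}(D)$, this pins the limiting unstable disk inside $\Lambda$. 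The bounded-distortion estimate ensuring $f^{n_j}(D)$ has uniform size is standard for hyperbolic attractors but should be cited (e.g.\ from \cite{MR0271991}) rather than reproved.
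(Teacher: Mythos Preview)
Your approach has a genuine gap stemming from a conflation of the dynamics of $f$ and $g$. The $\omega$-limit set in the lemma is taken with respect to $g$, so you need $g^{n_j}(x)\to y$; but then the images $g^{n_j}(D)$ of your leaf-disk $D\subset\Lf(x)$ are \emph{not} pieces of a leaf (since $g$ need not preserve the $f$-unstable lamination), and the bounded-distortion and uniform-size estimates you invoke are properties of the $f$-dynamics, not of $g$. All you retain from your setup is $g^{n_j}(D)\subset\Lambda$, with no a priori control on shape. To salvage the argument you would need a nonstationary inclination ($\lambda$-)lemma asserting that $g$-iterates of an $n$-disk whose tangent is in general position with $E^s$ contain sub-disks $C^1$-converging to $W^u_{\delta}(g^{n_j}(x))$; this is plausible but is considerably more work than you indicate, and is where the real content would have to lie.

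The paper's proof bypasses this via a direct mechanism that exploits the codimension-$1$ structure and never iterates a disk at all. The key observation you are missing is that if $E^s(x)$ is in general position with the codimension-$1$ subspace $T_x\Lambda$, then $W^s_{\epsilon'}(x)$ is transverse to the leaf $\Lf(x)$, and hence the stable saturation $U=\bigcup_{z\in\Lambda}W^s_{\epsilon'}(z)$ contains an open neighborhood of $x$. Consequently $W^u_{\epsilon}(x)\subset U$ for small $\epsilon$, meaning every point of $W^u_{\epsilon}(x)$ lies within $\epsilon'$ of $\Lambda$ along a $g$-stable manifold. One application of $g$ contracts this to $\lambda\epsilon'$; inductively the distance from $W^u_{\epsilon}(g^n(x))$ to $\Lambda$ is at most $\lambda^n\epsilon'\to 0$. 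By continuity of this distance function on $\Lambda$ one gets $W^u_{\epsilon}(y)\subset\Lambda$ for every $y\in\omega(x)$, and since $\omega(x)$ is $g$-invariant and $\epsilon$ is uniform this globalizes to $W^u(y)\subset\Lambda$. No cone fields, graph transforms, or $\lambda$-lemma are needed.
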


\begin{proof} 
We prove only the first conclusion as the result for the stable manifolds  is obtained by passing to the inverse.
For any  
$0<\epsilon<\epsilon_0$ and $\xi \in \Lambda$ define $$\delta _\epsilon (\xi):= \sup _{z\in W^u_\epsilon (\xi)}  \inf \{d(z,y )\mid {y\in \Lambda}\}. $$ By continuity of local unstable manifolds, $\delta_\epsilon(\xi)$ is continuous on $\Lambda$.  Note that if $W^u_\epsilon(x)$ is not contained in a leaf of   $\Lambda$ then  it contains a point not contained in $\Lambda$, hence  for any such point $\delta_\epsilon(x)> 0$.

If $E^s(x)$ is nontrivial and in general position with respect to $T_x\Lambda$ then  $\locStab x $ is a nontrivial manifold transverse to $\Lf(x)$ at $x$.  For any $0<\epsilon'<\epsilon_0$, the transversality of $\locStab x$ and $\Lf(x)$ and the fact that leaves of $\Lambda$ are  codimension-1 immersed submanifolds  guarantees that $U = \bigcup _{y\in \Lambda} W^s_{\epsilon'} (y) $ contains an open neighborhood of  $x$.  Thus there is some $0< \epsilon< \epsilon_0$ such that $W^u_\epsilon (x) \subset U$.  For this $\epsilon$ and $x$  we trivially obtain the upper bound
$$\delta_\epsilon (x)\le \sup _{z\in W^u_\epsilon (x)}  \inf \left\lbrace d(z,y ) \mid y\in \Lambda \ \mathrm{and} \  z\in W^s_{\epsilon'}(y)\right\rbrace  \le \epsilon'.$$ 
Then,
$$W^u_\epsilon (g(x)) \subset g(W^u_\epsilon (x)) \subset g\Big( \bigcup _{y \in\Lambda} W^s_{\epsilon'} (y)\Big)\subset 
\bigcup _{y \in \Lambda } W^s_{\lambda \epsilon'} (y)$$
and $\delta_\epsilon (g(x))\le \lambda \epsilon'$.  

Recursively, we have $W^u_\epsilon (g^n(x))\subset \bigcup _{y \in \Lambda} W^s_{\lambda^n {\epsilon'}} (y)$, and 
hence $\delta_\epsilon (g^n(x)) \le \lambda ^n{\epsilon'}$.  By continuity, for any $y \in \omega (x)$, we have $\delta_\epsilon(y) = 0$ and hence $\locUnst y \subset \Lambda$.  But since the choice of $\epsilon$ is uniform over all $y\in \omega(x)$ we must have $\unst y = \bigcup_{n\in \N} f^n(\locUnst{f^{-n}(y)}) \subset \Lambda$ for $y\in \omega (x)$.  
\end{proof}

Note that for $x$ as in Lemma \ref{lem:alphaOmega},  $\Lambda^a(x) = \overline{\bigcup_{y\in \omega( x)} \unst y} \subset \Lambda$ is a nontrivial hyperbolic attractor for $g$ (as it contains full unstable manifolds) and likewise $\Lambda^r(x) = \overline{\bigcup_{y\in \alpha (x)} \stab y} \subset \Lambda$ is a nontrivial hyperbolic repeller for $g$.  
As both are trivially locally maximal, Proposition \ref{cor:PerDense} guarantees that $\Lambda^r(x)$ and $\Lambda^a(x)$ contain periodic points, say $q$ and $p$ respectively.

\begin{rem}\label{rem:PP}
For $x$ as in Lemma \ref{lem:alphaOmega}, there exists  periodic points $q\in \Lambda^r(x),p\in\Lambda^a(x)$
such that $E^s(q)\subset T_q\Lambda$ and $E^u(p)\subset T_p\Lambda$.
\end{rem}

 We know (see e.g.  \cite{MR771099}) that $\Lambda$ is locally the Cartesian product of $\R^n$ and a Cantor set.   We make the following related definitions involving the local structure of $\Lambda$.
\begin{defn}
 A \emph{local $\Lambda$-chart} is any connected open set $V\subset M$ such that $V\cap \Lambda$ is homeomorphic to the product of $\R^n$ and a Cantor set.
\end{defn}

\begin{defn}
If $V$ is a local $\Lambda$-chart  then  for any $x\in \Lambda\cap V$ we will call  the connected component of $V\cap \Lambda$ containing $x$ the \emph{local leaf through $x$}, denoted by $\Lf_V(x)$,
\end{defn}

Note that if $V$ is a local $\Lambda$-chart, then for $x\in \Lambda \cap V$ the set $V\sm \Lf_V(x)$ contains two components.  Also note that for $x\in \Lambda \cap V$ and $y \in \Lambda \cap V\sm \Lf_V(x)$ the set $V\sm (\Lf_V(x)\cup \Lf_V(y))$ contains three components, only one of which contains both $\Lf_V(x)$ and $ \Lf_V(y)$ in its boundary.

\begin{defn}\label{def:between}
Let $V$ be a local $\Lambda$-chart and let $x\in V\cap \Lambda$, $y \in \Lambda \cap V\sm \Lf_V(x)$.  We say $z$ is \emph{between $x$ and $y$} if $z$ is contained in the unique open set in $V\sm (\Lf_V(x)\cup \Lf_V(y))$  containing both $\Lf_V(x)$ and $ \Lf_V(y)$ in its boundary.  
\end{defn}
\newcommand{\cant}{\mathfrak{C}}

Let $\cant\subset [0,1]$ be the middle-third  Cantor  set.  Given a local $\Lambda$-chart we may find a map $\phi\colon V\cap \Lambda \to \R^n \times \cant$ which is a homeomorphism onto its image and preserves a natural ordering on the local leaves $\{\Lf_V(y)\}$.  That is, if $z$ is between $x$ and $y$  and $\pi$ is the projection to the second coordinate $\pi\colon \R^n \times \cant \to \cant\subset [0,1]$ then either 
\[\pi(\phi(x))< \pi(\phi(z))< \pi(\phi(y)) \quad \mathrm{or} \quad \pi(\phi(y))< \pi(\phi(z))< \pi(\phi(x)).\]

\begin{defn}
Let $V$ be a local $\Lambda$-chart and let $\phi, \pi$ be as above.  Let $\gamma\colon(a,b)\to V$ be a continuous curve  and let $\mathcal{T} = \{t\in(a,b)\mid \gamma(t) \in \Lambda\}$.  Then we say $\gamma$ is \emph{monotonic in $V$} if the map
\[\pi\circ \phi\circ \gamma\colon \mathcal{T}\subset(a,b) \to [0,1]\]
is (non-strictly) monotonic.  
\end{defn}

\begin{defn}
Two continuous curves  $\gamma_1$ and $\gamma_2$ will be called \emph{canonically isomorphic}, if there is some local $\Lambda$-chart $V$ such that $\gamma_1\subset V$, $\gamma_2 \subset V$, both $\gamma_j$ are monotonic in $V$, and $\gamma_1$ and $\gamma_2$ intersect the same local leaves of $V\cap \Lambda$. 
\end{defn}

\begin{lem}\label{lem:chart}
Let $V$ be a local $\Lambda$-chart.  Then $g(V)$  is  also a local $\Lambda$-chart. 
In particular, if $\gamma_1, \gamma_2\subset V$ are canonically isomorphic  curves, monotonic in $V$, then $g(\gamma_1)$ and $g(\gamma_2)$ are canonically isomorphic curves monotonic in $g(V)$.\end{lem}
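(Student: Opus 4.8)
The plan is to reduce everything to the single observation that $g$ is a homeomorphism of $M$ with $g(\Lambda) = \Lambda$, so that each notion entering the relevant definitions---local $\Lambda$-chart, local leaf, betweenness, monotonicity, canonical isomorphism---is a topological invariant transported by $g$ from $V$ to $g(V)$. First I would check that $g(V)$ is a local $\Lambda$-chart: since $g$ is a diffeomorphism, $g(V)$ is a connected open subset of $M$, and $g(\Lambda) = \Lambda$ gives $g(V) \cap \Lambda = g(V \cap \Lambda)$, with $g|_{V \cap \Lambda}$ a homeomorphism of $V \cap \Lambda$ onto $g(V) \cap \Lambda$; as $V \cap \Lambda$ is homeomorphic to $\R^n \times \cant$, so is $g(V) \cap \Lambda$, hence $g(V)$ is a local $\Lambda$-chart.

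Next I would verify that $g$ intertwines the auxiliary structures attached to $V$ and to $g(V)$. Path-components being a topological invariant, $g(\Lf_V(x)) = \Lf_{g(V)}(g(x))$ for all $x \in V \cap \Lambda$, so $g$ induces a bijection $\bar g$ from the local leaves of $V$ onto those of $g(V)$. The relation ``$z$ is between $x$ and $y$'' is defined purely in terms of the components of $V \sm (\Lf_V(x) \cup \Lf_V(y))$ and which leaves lie in their closures, so $g$ preserves it: $g(z)$ is between $g(x)$ and $g(y)$ in $g(V)$ whenever $z$ is between $x$ and $y$ in $V$, and $\bar g$ carries the betweenness relation on the leaves of $V$ to that on the leaves of $g(V)$. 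Let $\phi \colon V \cap \Lambda \to \R^n \times \cant$ be the ordering-compatible chart constructed before the lemma, and let $\phi' \colon g(V) \cap \Lambda \to \R^n \times \cant$ be the analogous chart for $g(V)$. Both $\pi \circ \phi$ and $\pi \circ \phi'$ are constant on local leaves and compatible with the respective betweenness relations; since a linear order on a set with at least three elements is determined up to reversal by its betweenness relation, the map $h$ carrying $\pi(\phi(x))$ to $\pi(\phi'(g(x)))$ for $x \in V \cap \Lambda$---well defined on $\pi(\phi(V \cap \Lambda)) \subseteq [0,1]$ because both sides depend only on the local leaf of $x$ and $\bar g$ is a bijection of leaves---is (non-strictly) monotonic.

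The remaining assertions are then formal. If $\gamma \colon (a,b) \to V$ is monotonic in $V$, set $\mathcal{T} = \{ t \in (a,b) \mid \gamma(t) \in \Lambda \}$; since $g\gamma(t) \in \Lambda$ if and only if $\gamma(t) \in \Lambda$, the same set $\mathcal{T}$ works for $g \circ \gamma \subset g(V)$, and on $\mathcal{T}$ we have $\pi(\phi'(g(\gamma(t)))) = h(\pi(\phi(\gamma(t))))$, a composition of monotonic maps, hence monotonic; thus $g \circ \gamma$ is monotonic in $g(V)$. Finally, if $\gamma_1, \gamma_2 \subset V$ are monotonic in $V$ and meet the same local leaves of $V \cap \Lambda$---so that $V$ itself witnesses that they are canonically isomorphic---then $g(\gamma_1), g(\gamma_2) \subset g(V)$ are monotonic in $g(V)$ by the previous step, and $\gamma_i$ meets a leaf $\ell$ exactly when $g(\gamma_i)$ meets $\bar g(\ell)$, so $g(\gamma_1)$ and $g(\gamma_2)$ meet the same local leaves of $g(V) \cap \Lambda$; hence $g(V)$ witnesses that $g(\gamma_1)$ and $g(\gamma_2)$ are canonically isomorphic.

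The one step I expect to need genuine care is the construction of $h$, i.e.\ the claim that the ``natural ordering on local leaves'' is intrinsic to the topology of the pair $(V, V \cap \Lambda)$ and not an artifact of the chosen identification with $\R^n \times \cant$. This is exactly why the argument is routed through the betweenness relation, which is visibly a homeomorphism invariant and determines the linear order on the leaves up to reversal; everything else is a direct consequence of $g$ being a homeomorphism taking $\Lambda$ to $\Lambda$.
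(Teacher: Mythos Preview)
Your proof is correct and follows the same underlying idea as the paper: everything is transported by the homeomorphism $g$ from $(V,V\cap\Lambda)$ to $(g(V),g(V)\cap\Lambda)$. The paper's proof is considerably shorter because it chooses the chart on $g(V)$ to be $\phi' := \phi\circ g^{-1}$; with this choice one has $\pi\circ\phi'\circ(g\circ\gamma) = \pi\circ\phi\circ\gamma$ identically, so monotonicity and the canonical-isomorphism condition (meeting the same local leaves) transfer immediately without any need to build the comparison map $h$ or invoke the betweenness argument. Your longer route through betweenness is not wasted, however: it effectively verifies that the notion of ``monotonic in $V$'' is independent of the choice of order-compatible chart $\phi$, a point the paper uses but does not spell out.
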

\begin{proof}
If $V\cap \Lambda$ is homeomorphic to $\R^n\times C$ via a homeomorphism $\phi$ then $g(V) $ is homeomorphic to $\R^n\times C$  via the homeomorphism $\phi \circ g\inv$.  Additionally $\gamma_1, \gamma_2\subset V$ are canonically isomorphic if $\gamma_1\cap \Lf_V (x) \neq \emptyset \iff \gamma_2\cap \Lf_V (x) \neq \emptyset$.  But as $g$ preserves path-connected components of $V\cap \Lambda$ we must have $g(\gamma_1)\cap \Lf_{g(V)} (g(x)) \neq \emptyset \iff g(\gamma_2)\cap \Lf_{g(V)} (g(x))\neq \emptyset $.  \end{proof}

Now, since $\Lambda$ is a topologically mixing attractor for $f$, the leaves of $\Lambda$  
are dense in $\Lambda$. In particular, taking the periodic points from Remark \ref{rem:PP} 
for a fixed $\epsilon<\epsilon_0$ we have $\Lf(q) \cap  W^s_\epsilon(p) \neq \emptyset$.  Consequently, we may  choose a local $\Lambda$-chart $V$ containing $q$, such that $V\cap W^s_\epsilon(p)$ is a nonempty open subset of $W^s_\epsilon(p)$. 
\begin{rem}\label{rem:Curves}
We may find some 
curve $\gamma_q \subset W_\epsilon^u(q)$ containing $q$ and a corresponding canonically isomorphic 
curve  $\gamma_p \subset V\cap W^s_\epsilon(p)$, both of which are 
monotonic in $V$,  and intersect more than one local leaf. \end{rem}
Note that $q\in \gamma_q $ by construction, but that we do not \emph{a priori} have $p\in \gamma_p$.  

Passing to an iterate, we may assume that $p,q$ are fixed by $g$.  
We see intuitively that the transverse structure of $\Lambda$ contracts near $p$, and expands near $q$ under iterates of $g$ which should contradict the canonical isomorphism.  To derive a precise contradiction, we introduce a  measure transverse to the lamination of $\Lambda$, given by a canonical local disintegration of the measure of maximal entropy (for the original map $f\colon \Lambda \to \Lambda$)  into a product measure.  An explicit construction of the measure of maximal entropy   and its disintegration is given in \cite{Sinai:1968p7692} for uniformly hyperbolic transitive diffeomorphisms of compact manifolds, and in \cite{Ruelle:1975p7690}  for basic sets of  Axiom A diffeomorphisms.  

Recall that given an Axiom A diffeomorphism $f$ (respectively a locally maximal compact hyperbolic set $\Lambda$) and a spectral decomposition $\Omega_1\cup \dots\cup \Omega_l = \NW(f)$ (respectively $\Omega_1\cup \dots\cup \Omega_l = \NW(\restrict f \Lambda)$) the sets $\bigcup _{j\in \Z} f^j (\Omega_i)$ are called the \emph{basic sets} of $f$ (respectively of $\restrict f \Lambda$).  That is, a basic set is a compact, transitive, 
open subset of the nonwandering points of $f$ (respectively $\restrict f \Lambda$).  

Clearly, mixing hyperbolic attractors are basic sets.   
We present the theorem as stated in \cite{Ruelle:1975p7690} (adapted to our notation), the conclusion of which applies directly to $\Lambda$.

\begin{thm}[Ruelle, Sullivan  \cite{Ruelle:1975p7690}]\label{thm:RSS}
Let $\Omega$ be a basic set. Let $h $ be the topological entropy of $\restrict f \Omega$. Then there is an $\epsilon>0$ so that for each $x\in \Omega$ there is a measure $\mu_x^u$ on $\locUnst x$ and a measure $\mu_x^s$ on $\locStab x$ such that: 
\begin{enumerate}
\item $\supp  \mu_x^u= \locUnst x\cap \Omega$ and $\supp  \mu_x^s= \locStab x\cap \Omega$.
\item $\mu^u $ and $\mu^s$ are invariant under canonical isomorphism (see Definition \ref{def:CI}).  That is, if $x'\in \locStab [\eta] x$ and $D\subset \locUnst [\eta]  x, D'\subset \locUnst  [\eta] {x'}$ are canonically isomorphic then $\mu^u_x(D )= \mu^u_{x'} (D')$,  and if  $x'\in \locUnst [\eta] x$ and $D\subset \locStab  [\eta] x, D'\subset \locStab  [\eta]{ x'}$ are canonically isomorphic then $\mu^s_x (D )= \mu^s_{x'} (D')$.
\item $f_* \mu ^u_x = e^{-h} \mu ^u_{f(x)} $ on $\locUnst{f(x)}$ and  $f\inv _* \mu ^s_x = e^{-h}  \mu ^s_{f\inv(x)} $ on $\locStab{f\inv(x)}$.
\item The product measure $\mu^u_x \times \mu^s_x$ is locally equal to Bowen's measure of maximal entropy.  
\end{enumerate}
\end{thm}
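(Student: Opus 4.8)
Although this is a result cited from the literature, here is the proof I would give, following Bowen's symbolic approach: transport the Parry (maximal-entropy) measure of a subshift of finite type through a Markov partition. First I would fix a Markov partition $\{R_1,\dots,R_m\}$ of $\Omega$ whose rectangles have diameter less than the $\epsilon$ being constructed, each $R_i$ carrying the usual local product structure $R_i\cong(\locUnst{x_i}\cap\Omega)\times(\locStab{x_i}\cap\Omega)$. This yields the coding map $\pi\colon\Sigma_A\to\Omega$ from the one-step subshift $(\Sigma_A,\sigma)$ determined by a transition matrix $A$ with entries in $\{0,1\}$; one has that $\pi$ is surjective, H\"older, finite-to-one, and injective off a set that is null for every measure of maximal entropy. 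Since $\pi$ is finite-to-one it preserves topological entropy, so $h=\log\rho(A)$ for the Perron eigenvalue $\rho(A)$, and the unique measure of maximal entropy of $\restrict f\Omega$ is $\nu:=\pi_*m$, where $m$ is the Parry measure of $(\Sigma_A,\sigma)$.

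The crux is the product structure of $m$. If $u,v$ are the left and right Perron eigenvectors of $A$ normalized by $\langle u,v\rangle=1$, then an admissible cylinder has Parry mass $m([i_{-k}\cdots i_0\cdots i_l])=u_{i_{-k}}v_{i_l}\rho(A)^{-(k+l)}$; in particular, conditioned on the present symbol $i_0$, the past $(\dots,i_{-1})$ and the future $(i_1,\dots)$ are independent, and the conditional law of the future depends only on $i_0$. I would define $\mu^u_x$ on $\locUnst x$ as the $\pi$-pushforward of the future marginal of $m$ restricted to the portion of the present cylinder that maps into $\locUnst x\cap\Omega$, and $\mu^s_x$ on $\locStab x$ symmetrically. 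Then property (1) is immediate from irreducibility of $A$ (so $m$, hence $\nu$, has full support) together with surjectivity of $\pi$; property (2) is precisely the past-independence of the future conditional, since a canonical isomorphism along local stable leaves alters only the past code, and dually for $\mu^s$; property (3) follows by tracking the factor $\rho(A)^{-(k+l)}$ under one application of $\sigma$, which appends a symbol to the past and shortens the future, yielding $f_*\mu^u_x=e^{-h}\mu^u_{f(x)}$ on $\locUnst{f(x)}$ and dually for $f\inv$ and $\mu^s$; and property (4) holds because inside each $R_i$ the measure $\nu$ is, by the conditional independence just noted, literally the product $\mu^u_x\times\mu^s_x$, while $\nu$ is Bowen's measure of maximal entropy.

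I expect the main obstacle to be the bookkeeping forced by the non-injectivity of $\pi$: rectangles overlap along their boundaries, a point of $\Omega$ may carry several codes, and one must verify that the boundary set is null for $\nu$ and for every $\mu^u_x,\mu^s_x$ (so the transported measures descend to well-defined Borel measures on $M$, not merely on $\Sigma_A$), and that the identifications between overlapping rectangles are compatible with the canonical isomorphisms of Definition~\ref{def:CI} (so the invariance in (2) is genuinely coordinate-free). These reduce to the standard---if somewhat delicate---facts that the boundary of a Markov partition is null for every maximal-entropy measure and that refining a Markov partition refines the product decomposition compatibly; granting them, (1)--(4) follow from the elementary computations with the Parry measure above. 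Finally I would choose $\epsilon$ small enough that every ball $B(x,\epsilon)$, hence every local manifold of radius $\epsilon$, lies inside a single rectangle, so that each $x\in\Omega$ inherits an honest product chart.
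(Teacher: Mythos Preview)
The paper does not supply a proof of this theorem; it is quoted from Ruelle--Sullivan \cite{Ruelle:1975p7690} and only its \emph{statement} is used (together with a passing reference to ``the proof \ldots\ in the appendix of \cite{Ruelle:1975p7690}'' to justify that the transverse measure $\nu$ is non-atomic and positive on curves crossing leaves). So there is nothing in the paper to compare your argument against line by line.

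That said, your sketch is a correct and standard route to the result, essentially Bowen's symbolic construction rather than the Ruelle--Sullivan one. Ruelle and Sullivan build $\mu^s,\mu^u$ directly as transverse invariant measures (currents) via a limiting/averaging procedure on the lamination, without passing through a subshift; your approach instead transports the Parry measure through a Markov partition and reads off (1)--(4) from the explicit cylinder formula $m([i_{-k}\cdots i_l])=u_{i_{-k}}v_{i_l}\rho(A)^{-(k+l)}$. Both yield the same measures (uniqueness of the maximal-entropy measure forces this), and your route has the advantage that the scaling in (3) and the holonomy invariance in (2) become one-line computations; the price is exactly the boundary bookkeeping you flag.

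Two small points. First, transitivity of the basic set gives irreducibility of $A$, which is all you need for Perron--Frobenius and hence for (1); you do not need mixing here. Second, your closing sentence about choosing $\epsilon$ so that every $B(x,\epsilon)$ lies in a single rectangle cannot be arranged as stated, since points near rectangle boundaries will always straddle two rectangles. The usual fix is either to refine the partition and use that overlapping rectangles agree on their common product charts (which you already note must be checked), or to define $\mu^\sigma_x$ first on a fixed rectangle containing $x$ and then use holonomy invariance (2) to extend and patch; either way this is routine once the boundary-null statement is in hand.
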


	We define a measure $\nu$, defined  on all $C^1$ curves in $M$, as follows. 
Given such a curve  $\gamma\colon [0,1]\to M$ we may partition $[0,1]$ by a countable collection of $\{t_j\}$ such that $\gamma ([t_i, t_{i+1}])$ is either disjoint from $\Lambda$ or monotonic in a local $\Lambda$-chart $V_i$.  
In the  case that $\gamma ([t_i, t_{i+1}])$ is disjoint from $\Lambda$ define $\nu(\gamma ([t_i, t_{i+1}])) = 0$.  
In the case where $\gamma ([t_i, t_{i+1}])$ is monotonic in some local $\Lambda$-chart $V$, we fix an $x\in V$ and define $\pi_x\colon \gamma ([t_i, t_{i+1}])\cap (\Lambda\cap V)\to W^s_{V,f}(x)$ by $\pi_x(z) = \Lf_V(z) \cap W^s_{V,f}(x)$, where $ W^s_{V,f}(x)$ is the connected component of  $W^s_{f}(x)\cap V$ containing $x$, and define $\nu(\gamma ([t_i, t_{i+1}]) ) = \mu^s_x(\pi_x(\gamma ([t_i, t_{i+1}]) \cap \Lambda))$.
We then extend $\nu$ to all of $\gamma$ by additivity.
Note that the assumption that the curves $\gamma ([t_i, t_{i+1}])$ are monotonic in $V_i$ and the fact that $\mu^s$ is invariant under canonical isomorphism ensure that $\nu$ is well defined.
	
	By the proof of Theorem \ref{thm:RSS} in the appendix of \cite{Ruelle:1975p7690}, it is a direct computation to show that $\nu$ is non-atomic and $\nu(\gamma)>0$ for any curve intersecting more than one leaf of $\Lambda$.  Hence, for $\gamma_p, \gamma_q$ as in Remark \ref{rem:Curves}, we have $0<\nu(\gamma_p ) = \nu(\gamma_q)<\infty$.

\begin{proof}[Proof of Proposition \ref{thm:UnfTrans}.]
Let $x$ be a point such that $E^s(x) \not \subset T_x\Lambda$ and $E^u(x) \not \subset T_x\Lambda$.   Then Lemma \ref{lem:alphaOmega}, and Remarks \ref{rem:PP} and \ref{rem:Curves} apply.  Take the periodic points $p$ and $q$ as in Remark \ref{rem:PP}, and pass to an iterate so that  they are fixed by $g$.  Let $V, \gamma_p, \gamma_q$ be as in Remark \ref{rem:Curves}.


To arrive at a contradiction, we may reduce $\epsilon$ in Remark 3.10 so that 
$\gamma_q\subset W^u_\epsilon(q) $ intersects every connected component of $W^u_{\epsilon}(q)\cap \Lambda$.
 Since $W^u_\epsilon(q) \subset g(W^u_\epsilon(q))$ we must have that 
$g(\gamma_q)$ contains a subset canonically isomorphic to $\gamma_q$.  Thus the sequence $\{\nu (g^k(\gamma_q))\}_{k\ge 0}$ is bounded away from zero.

On the other hand we may find some 
curve $\tilde \gamma\colon [0,1]\to W^s_{\epsilon_0}(p)$, monotonic in some local $\Lambda$-chart, such that $p \in \tilde \gamma((0,1))$.  Since $\gamma_p\subset W^s_{\epsilon_0}(p)$, for some $n$ we have that $g^n(\gamma_p)$ is canonically isomorphic to a subset of $\tilde \gamma $.  Furthermore, for any open set $U\subset \tilde \gamma$ containing $p$, we can find some $N>0$ such that $g^N(\tilde \gamma)$ is canonically isomorphic to a subset of $U$.  But since $$\bigcap _{U\subset \tilde \gamma:p\in U} U= \{p\}$$ we must have a subsequence $ \{n_j\}$ such that  $\nu (g^{n_j}(\gamma_p)) \to\nu(\{p\}) = 0$
contradicting the fact that  $\nu (g^{n}(\gamma_p) )= \nu( g^{n}(\gamma_q))$  is bounded away from $0$ for all $n\ge 0$. 

Thus no such point $x$ exists, and by continuity of the hyperbolic splitting, and connectedness of $\Lambda$, we conclude that either  $E^u(x) \subset T_x\Lambda$ for all $x\in \Lambda$, or   $E^s(x) \subset T_x\Lambda$ for all $x\in \Lambda$. 
\end{proof}


\subsection{Dimension of the Hyperbolic Splitting.}
Let $\Lambda$ and $g$ be as above.  By  passing to the inverse if necessary and invoking Proposition \ref{thm:UnfTrans} we may assume for all $x\in \Lambda$ that $E^u (x) \subset T_x\Lambda$ and hence $E^s (x)$ is in general position with respect to $T_x\Lambda$.  Note again that Lemma \ref{lem:trivialsplitting} guarantees that $\dim E^\sigma (x)\ge 1$ for all $x\in \Lambda$ and $\sigma \in \{s,u\}$.  
Under these assumptions we prove that $\dim E^s(x) = 1$,  $\dim E^u(x) = n$, and  $\Lambda$ is a topologically mixing hyperbolic attractor for $g$.

For $x\in \Lambda$, denote by $\Lf_\eta(x)$ the path-connected component of $\Lambda\cap B(x,\eta)$ containing $x$.  
We call a path-connected component $\bound \subset \Lambda$ a \emph{boundary leaf} if for every $x\in \bound$ there exists some $\eta>0$ such that $B(x,\eta)\sm \Lf_\eta(x)$ contains two connected components, one of which is disjoint from $\Lambda$.  
Because $\Lambda $ is locally the product of a Cantor set and $\R^n$, boundary leaves exist.  Also, from \cite{MR771099} we know that there are only a finite number of boundary leaves.  

\begin{lem}\label{lem:CmtBdrSet}
Fix a boundary leaf \ $\bound\subset  \Lambda$, and denote by $G_\epsilon$ the set of points in $\bound$ such that one component of $ W^s_\epsilon(y)\sm\Lf_\epsilon( y)$ is disjoint from $\Lambda$.  Then $G_\epsilon$ is an open subset of $\bound$ and $K_\epsilon :=\overline {G_\epsilon}$ is a compact subset of $\bound$, where $\bound$ is endowed with the topology of $\R^n$ as an immersed submanifold.   
\end{lem}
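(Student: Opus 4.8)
The plan is to first establish that $G_\epsilon$ is open in $\bound$ by a continuity argument using the local product structure, and then to argue that $K_\epsilon = \overline{G_\epsilon}$ — the closure taken inside the immersed submanifold $\bound \cong \R^n$ — is compact by showing that points of $G_\epsilon$ cannot escape to infinity along the leaf. For openness, I would fix $y \in G_\epsilon$, so one component, say the ``positive'' side, of $W^s_\epsilon(y) \smallsetminus \Lf_\epsilon(y)$ is disjoint from $\Lambda$. Since $\Lambda$ is locally the product of $\R^n$ and a Cantor set, choose a local $\Lambda$-chart $V$ containing $y$ with a homeomorphism $\phi \colon V \cap \Lambda \to \R^n \times \cant$; the stable manifold $W^s_\epsilon(y)$ meets $V \cap \Lambda$ in a set that (after possibly shrinking) maps under $\pi \circ \phi$ into $\cant$, and the condition that one side be disjoint from $\Lambda$ translates into $\pi(\phi(y))$ being an endpoint of one of the complementary intervals of $\cant$ — more precisely, being isolated on one side within the image of the relevant stable slice. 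Using the continuity of the local stable manifolds $\{W^s_\epsilon(x)\}_{x \in \Lambda}$ and of the chart $\phi$, for $y'$ close to $y$ in $\bound$ the slice $W^s_\epsilon(y') \cap V$ varies continuously and its image under $\pi \circ \phi$ remains a small perturbation; the ``isolated on one side'' property is stable under such perturbations (a neighborhood in $\R^n \times \cant$ of a point whose $\cant$-coordinate is a one-sided isolated point of a closed slice still has that property). Hence a whole $\bound$-neighborhood of $y$ lies in $G_\epsilon$.

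Next, to see $K_\epsilon$ is compact: since $\bound$ with its intrinsic $\R^n$-topology is a manifold, it suffices to show $\overline{G_\epsilon}$ (intrinsic closure) is closed in $\bound$ — automatic — and that it is bounded, i.e. contained in a compact subset of $\bound$. Equivalently, I want to rule out a sequence $y_k \in G_\epsilon$ that leaves every compact subset of the leaf $\bound$. Here I would use that $\bound$ is one of only finitely many boundary leaves (from \cite{MR771099}) and that $\Lambda$ is compact in $M$: the sequence $y_k$, viewed in $M$, has a subsequence converging to some $y_\infty \in \Lambda$. The key point is to show $y_\infty$ again lies in a boundary leaf with the same one-sided gap, in fact in $\bound$ itself and in $K_\epsilon$ — but if $y_k$ left every compact set of $\bound$ while converging in $M$, then $y_\infty$ would have to lie in a \emph{different} path-component of $\Lambda$ accumulated by $\bound$, and the local product structure near $y_\infty$ (a chart $\R^n \times \cant$) forces $\bound$ to enter that chart as a union of local leaves accumulating on $\Lf(y_\infty)$. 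Combining this accumulation with the one-sided-gap condition carried by the $y_k$ and the continuity of $W^s_\epsilon(\cdot)$, one reaches a contradiction with the product structure (a one-sided gap condition on a sequence of leaves accumulating on a limit leaf that itself sits in the interior of the Cantor direction from one side). This pins $\overline{G_\epsilon}$ inside a compact piece of $\bound$, giving compactness of $K_\epsilon$.

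The main obstacle I anticipate is the compactness half, specifically making rigorous the interaction between the intrinsic topology of the immersed leaf $\bound$ and the ambient topology of $M$: a priori $\bound$ accumulates on itself and on other leaves in $M$, so ``$\overline{G_\epsilon}$ is compact in $\bound$'' is a genuine statement requiring that the limiting process stays within one leaf. The clean way to handle this is to exploit finiteness of the boundary leaves together with the fact that boundary leaves (again from \cite{MR771099}, or from the structure of codimension-one expanding attractors) are \emph{properly} embedded — or at least that the one-sided-gap set sits in a ``thickened'' compact region — so that a sequence in $G_\epsilon$ escaping to infinity in $\bound$ cannot converge in $M$ at all, or converges to a point where the product structure is violated. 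I would phrase the argument so as to use only: (i) continuity and uniformity of $\{W^s_\epsilon(x)\}$, (ii) the local $\R^n \times \cant$ product structure, and (iii) finiteness of boundary leaves, all of which are available from the preliminaries and \cite{MR771099}. Openness of $G_\epsilon$ is routine given (i) and (ii); the compactness of $K_\epsilon$ is where the real care is needed.
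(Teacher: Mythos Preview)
Your strategy matches the paper's: openness via continuity of the family $\{W^s_\epsilon(x)\}$, and compactness by taking an ambient limit of a sequence in $G_\epsilon$ and deriving a contradiction from the one-sided-gap condition. Two points need correction. First, your assertion that an ambient limit $y_\infty$ of a sequence $y_k\in G_\epsilon$ escaping every compact of $\bound$ must lie in a \emph{different} path-component is false: every leaf is dense in $\Lambda$, so $\bound$ accumulates on itself and $y_\infty$ may well lie in $\bound$. What actually matters is only that, in a local $\Lambda$-chart $V$ around $y_\infty$, the points $y_k$ eventually sit on local leaves $\Lf_V(y_k)$ distinct from $\Lf_V(y_\infty)$. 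Second, your contradiction is left vague; the paper makes it concrete as follows. Pass to a subsequence so that the local leaves $\Lf_V(y_{n_j})$ are pairwise distinct and monotone toward $\Lf_V(y_\infty)$ (i.e.\ $y_{n_j}$ lies between $y_{n_i}$ and $y_\infty$ for $i<j$). Since $W^s_\epsilon(y_{n_j})$ is transverse to the lamination and has fixed size $\epsilon$ while the local leaves accumulate, for some $J$ the curve $W^s_\epsilon(y_{n_J})$ must intersect both $\Lf_V(y_\infty)$ and $\Lf_V(y_{n_{J-1}})$; these lie in opposite components of $V\sm\Lf_V(y_{n_J})$, so $W^s_\epsilon(y_{n_J})$ meets $\Lambda$ on both sides of $\Lf_\epsilon(y_{n_J})$, contradicting $y_{n_J}\in G_\epsilon$. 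This sandwich argument is the missing mechanism in your sketch, and it obviates any appeal to proper embedding of boundary leaves (which you should not assume).
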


\begin{proof}The openness of $G_\epsilon$ (in $\bound$) follows from the continuity of local stable manifolds.  
We show sequential compactness of $\overline {G_\epsilon}$ in $\bound$.
Given a sequence $\{x_j\}\subset G_\epsilon$, compactness of $\Lambda$ guarantees an \emph{ambiently} convergent subsequence $x_{j_k} \to x^*$.  We show for any ambiently convergent sequence  $\{x_j\} \to x^*$ with $\{x_j\}\subset G_\epsilon$,  that $x^* $ is contained in $ \bound$ and that the convergence $\{x_{j}\}\to x^*$ occurs in the internal topology of $ \bound$.  

Let $\{x_j\} \to x^*$ be such a sequence.  
Let $V$ be a local $\Lambda$-chart  containing $x^*$.  Truncating our sequence we may assume that $\{x_j\}\subset V\cap \Lambda$.  If the sequence $\{x_j\}$ doesn't converge to $x^*$ in $\bound$, then by passing to a subsequence we may assume that 
$x^*\notin \Lf_V(x_{j})$ for any $j$.  

Thus we may choose a subsequence $\{x_{n_j}\}\to x^*$ such that $\Lf_V(x_{n_j}) = \Lf_V(x_{n_i})$ if and only if $ i = j$ and such that if $i<j$ then $x_{n_j}$ is between $x_{n_i}$ and $x^*$ (see Definition \ref{def:between}).  But since $W^s(x_{n_j})$ is everywhere transverse to the leaves of $\Lambda$, and the local leaves $\{\Lf_V(x_{n_i})\}$ 
vary continuously, there is some $J$ such that $W^s_\epsilon (x_{n_J}) \cap \Lf_V(x^*)\neq \emptyset$ and $W^s_\epsilon (x_{n_J}) \cap \Lf_V(x_{n_{J-1}})\neq \emptyset$.  But $\Lf_V(x^*)$ and $ \Lf_V(x_{n_{J-1}})$ are in different components of $ V\sm \Lf_V(x_{n_{J}})$ contradicting that $ x_{n_{J}}\in G_\epsilon$. 
\end{proof}

Now, we know there are only a finite number of boundary leaves, and since the map $g$ preserves boundary leaves, it must permute them.  Thus there is some $k$ such that $g^{k}(\bound ) \subset \bound$ for  every boundary leaf $\bound$.  Fix a boundary leaf $\bound$.  
Then $g^k\colon \Lambda\to \Lambda$ induces a diffeomorphism $\tilde g\colon \bound \to \bound$.  Since we assume that $E^u(x) \subset T_x\Lambda$ for all $x\in \Lambda$, the induced diffeomorphism  $\tilde g$ is uniformly hyperbolic in the induced metric, with a $D\tilde g$-invariant hyperbolic splitting $E^s_{\tilde g}(x) = E^s(x)\cap T_x \Lambda$ and $E^u_{\tilde g}(x) = E^u(x)$.  

We have the following corollary.

\begin{cor}
Fix a boundary leaf $\bound$ and  $0<\epsilon<\epsilon_0$ such that the compact set $K_\epsilon \subset \bound$ as guaranteed by Lemma \ref{lem:CmtBdrSet} is nonempty.  Then  $\tilde g\inv (K_\epsilon)\subset K_\epsilon$, hence there exists a periodic point on any boundary leaf.
\end{cor}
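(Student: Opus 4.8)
The plan is to establish the inclusion $\tilde g\inv(K_\epsilon)\subset K_\epsilon$ by first proving it for the open set $G_\epsilon$ and then passing to closures, and then to extract a periodic point from the Anosov Closing Lemma. Recall that $\tilde g$ is a self-diffeomorphism of $\bound$, hence a homeomorphism of $\bound$ in its internal topology; since $K_\epsilon=\overline{G_\epsilon}$ (closure in $\bound$, by Lemma~\ref{lem:CmtBdrSet}), it suffices to prove $\tilde g\inv(G_\epsilon)\subset G_\epsilon$, as then $\tilde g\inv(K_\epsilon)=\overline{\tilde g\inv(G_\epsilon)}\subset\overline{G_\epsilon}=K_\epsilon$.

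The heart of the argument is the inclusion $\tilde g\inv(G_\epsilon)\subset G_\epsilon$. Fix $y\in G_\epsilon$ and put $x_0:=\tilde g\inv(y)=g^{-k}(y)\in\bound$. The inclusions $g(W^s_\epsilon(g\inv(x)))\subset W^s_\epsilon(x)$ iterate to $g^k(W^s_\epsilon(x_0))\subset W^s_\epsilon(g^k(x_0))=W^s_\epsilon(y)$, so $W^s_\epsilon(x_0)\subset g^{-k}(W^s_\epsilon(y))$. Let $C$ be the component of $W^s_\epsilon(y)\setminus\Lf_\epsilon(y)$ disjoint from $\Lambda$. Since $g^{-k}$ is a homeomorphism of $M$ carrying $\Lambda$ onto $\Lambda$ and leaves of $\Lambda$ to leaves, $g^{-k}(C)$ is disjoint from $\Lambda$ and is cut off from the rest of $g^{-k}(W^s_\epsilon(y))$ by the leaf piece $g^{-k}(\Lf_\epsilon(y))$ through $x_0$. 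To convert this into the statement $x_0\in G_\epsilon$ I would pass to a local $\Lambda$-chart $V$ around $x_0$ and apply Lemma~\ref{lem:chart}: $g^{-k}$ carries the local $\Lambda$-chart around $y$ to a local $\Lambda$-chart, respecting the natural ordering of the local leaves, so the ``gap'' of the transverse Cantor set witnessing $y\in G_\epsilon$ is carried to a gap of the transverse Cantor set of $V$ whose bounding local leaf is $\Lf_V(x_0)$. Combined with $W^s_\epsilon(x_0)\subset g^{-k}(W^s_\epsilon(y))$, this shows that one component of $W^s_\epsilon(x_0)\setminus\Lf_\epsilon(x_0)$ is contained in $g^{-k}(C)$, hence disjoint from $\Lambda$, so $x_0\in G_\epsilon$. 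This establishes $\tilde g\inv(K_\epsilon)\subset K_\epsilon$.

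To produce the periodic point, observe that $\tilde g\inv(K_\epsilon)\subset K_\epsilon$ forces the full backward $\tilde g$-orbit of any $z\in K_\epsilon$ to lie in the compact set $K_\epsilon$; hence the $\alpha$-limit set $\alpha(z)$ of $z$ under $\tilde g$ is a nonempty, compact, $\tilde g$-invariant subset of $K_\epsilon\subset\bound$ consisting of points nonwandering for $\tilde g$. Because $E^u(x)\subset T_x\Lambda$ for every $x\in\Lambda$, the leaf $\bound$ carries the uniformly hyperbolic splitting $E^s_{\tilde g}(x)=E^s(x)\cap T_x\Lambda$, $E^u_{\tilde g}(x)=E^u(x)$ of $\tilde g$ in the induced metric, so $\alpha(z)$ is a compact, nonwandering, hyperbolic set for the diffeomorphism $\tilde g$ of the manifold $\bound$. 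Proposition~\ref{cor:PerDense}, applied to $\tilde g$ on $\bound$, then gives $\alpha(z)\subset\overline{\Per(\tilde g)}$, so $\tilde g$ has a periodic point on $\bound$, and any such point is $g$-periodic. Since the boundary leaf $\bound$ was arbitrary, the same argument (with an admissible $\epsilon$ for each) gives a periodic point on every boundary leaf.

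The step I expect to be the main obstacle is the transfer $y\in G_\epsilon\Rightarrow x_0\in G_\epsilon$. A priori the leaf piece $\Lf_\epsilon(x_0)$ could leave and re-enter $W^s_\epsilon(x_0)$, so the components of $W^s_\epsilon(x_0)\setminus\Lf_\epsilon(x_0)$ need not line up obviously with those of $g^{-k}(W^s_\epsilon(y))\setminus g^{-k}(\Lf_\epsilon(y))$. Passing to a local $\Lambda$-chart and invoking Lemma~\ref{lem:chart} to reduce the whole picture to an order-preserving homeomorphism between transverse Cantor sets — under which ``being the endpoint of a gap'' is manifestly preserved — is what makes this go through cleanly; everything else is routine.
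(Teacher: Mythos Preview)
Your proof is correct and follows essentially the same route as the paper: backward invariance of $K_\epsilon$ via the inclusion $W^s_\epsilon(g^{-k}(y))\subset g^{-k}(W^s_\epsilon(y))$, then an $\alpha$-limit set inside $K_\epsilon$ together with Proposition~\ref{cor:PerDense} to produce periodic points. The only organizational differences are that you first prove $\tilde g^{-1}(G_\epsilon)\subset G_\epsilon$ and pass to closures (the paper works directly with $K_\epsilon$, though its argument is really the $G_\epsilon$ argument), and that you route the ``one component is disjoint from $\Lambda$'' step through local $\Lambda$-charts and Lemma~\ref{lem:chart}; the paper instead draws this immediately from the inclusion $W^s_\epsilon(g^{-nk}(x))\subset g^{-nk}(W^s_\epsilon(x))$ and the $g$-invariance of $\Lambda$, so your chart detour is an elaboration rather than a genuinely different idea.
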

\begin{proof}
Pick any $x\in K_\epsilon$.  For every $n\ge 0$, we know that $W^s_\epsilon(g^{-nk}(x))\subset g^{-nk}(W^s_\epsilon(x))$ and hence one component of $W^s_\epsilon(g^{-nk}(x))\sm \Lf_\epsilon(g^{-nk}(x))$ is disjoint from $\Lambda$.  Hence $g^{-nk}(x) \in K_\epsilon$ for any $n\ge 0$.  
 
Now considering the $\alpha$-limit set of $x$ under the dynamics $\tilde g\colon \bound\to \bound$  
we have that  $\alpha(x) \subset K_\epsilon$.  Since  $\alpha (x)$ is a nonempty compact hyperbolic set  for $\tilde g\colon \bound\to \bound$ with $\alpha (x) \subset \NW(\tilde g)$, 
by Proposition \ref{cor:PerDense} the periodic points of $\tilde g$ accumulate on $\alpha(x)$.  
\end{proof}

Let $p\subset \bound$ be a boundary periodic point.  Denote by  $W^s_{\tilde g}(p)\subset \bound$ the stable manifold of $p$  for the induced hyperbolic diffeomorphism $\tilde g\colon \bound \to \bound$.  We have that $W^s_{\tilde g}(p)$  is the path-connected component of $W^s(p) \cap \bound$ containing $p$.  Choose an $\epsilon>0$ such that $p\in G_\epsilon$.  Denoting by $W^s_{\eta ,\tilde g}(p)$ the connected component of $W^s(p) \cap \bound\cap B(p,\eta)$ containing $p$, we may choose $\eta>0$ such that $W^s_{\eta ,\tilde g}(p) \subset G_\epsilon$ and $W^s_{\eta ,\tilde g}(p) $ is homeomorphic to a $(k-1)$-dimensional disk, where $k = \dim E^s$ along $\Lambda$.  Let $J$ be such that $\tilde g^J(p) = p$.  Then $W^s_{\tilde g} (p) =\bigcup_{n\in \N} \tilde g^{-n J}(W^s_{\eta ,\tilde g}(p))$, whence $W^s_{\tilde g}(p) \subset K_\epsilon$.

We may now assemble our observations above into the following proposition.
\begin{prop}\label{thm:dim}
For $g\colon \Lambda \to \Lambda$ as in Theorem \ref{thm:main}, with $E^s(x)$ in general position with respect to $T_x (\Lambda)$ at some $x\in \Lambda$, we have $\restrict {\dim E^s} \Lambda = 1$. 
\end{prop}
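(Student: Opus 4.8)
The plan is the following. Write $k=\restrict{\dim E^s}{\Lambda}$, which is well defined by continuity of $E^s$ and connectedness of $\Lambda$. By Proposition~\ref{thm:UnfTrans}, the hypothesis that $E^s(x)$ be in general position with $T_x\Lambda$ at some point forces $E^u(x)\subset T_x\Lambda$ for every $x\in\Lambda$ (the standing assumption of this subsection); then $E^s(x)$ is everywhere in general position with $T_x\Lambda$, and since $\dim E^s(x)+\dim E^u(x)=\dim M=n+1$ while $\dim T_x\Lambda=n$, a dimension count in $T_xM$ gives $\dim\bigl(E^s(x)\cap T_x\Lambda\bigr)=k-1$ for all $x$. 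I want $k=1$, so suppose for contradiction that $k\ge 2$. Fix a boundary leaf $\bound$ and an iterate $g^{N}$ with $g^{N}(\bound)\subset\bound$, and let $\tilde g\colon\bound\to\bound$ be the induced diffeomorphism; as observed above, $\tilde g$ is Anosov with invariant splitting $E^s_{\tilde g}(x)=E^s(x)\cap T_x\bound$ of dimension $k-1\ge 1$ and $E^u_{\tilde g}(x)=E^u(x)$ of dimension $n+1-k\ge 1$ (the latter by Lemma~\ref{lem:trivialsplitting}, since $\Lambda$ is perfect). Let $p\in\bound$ be the boundary periodic point produced above; passing to a further iterate we may take $\tilde g(p)=p$, and for the appropriate $\epsilon$ we have the key containment $W^s_{\tilde g}(p)\subset K_\epsilon$, where by Lemma~\ref{lem:CmtBdrSet} the set $K_\epsilon$ is compact in the intrinsic $\R^n$-topology of $\bound$.

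The contradiction should come from the incompatibility of this containment with the fact that $W^s_{\tilde g}(p)\cong\R^{k-1}$ is a noncompact immersed submanifold of positive dimension. Since $\tilde g$ is Anosov on $\bound$, the closure $\Omega:=\overline{W^s_{\tilde g}(p)}$, taken in $\bound$, is a nonempty compact $\tilde g$-invariant subset of $K_\epsilon\subset\bound\subset\Lambda$, hence a compact hyperbolic set for a power of $g$, with $\Omega\supsetneq\{p\}$. I would study $\nw{\restrict{\tilde g}{\Omega}}$, a compact nonwandering hyperbolic set whose periodic points are dense in it by Proposition~\ref{cor:PerDense}. If $\nw{\restrict{\tilde g}{\Omega}}=\{p\}$, then $\alpha(z)=\omega(z)=\{p\}$ for every $z\in\Omega$, so Lemma~\ref{lem:FinAsym}, applied to $\tilde g$ and to $\tilde g\inv$, forces $\Omega\subset\stab p\cap\unst p$; in particular $W^s_{\tilde g}(p)=\Omega$ is closed in $\bound$, hence a closed and bounded --- therefore compact --- subset of $\bound\cong\R^n$, contradicting $W^s_{\tilde g}(p)\cong\R^{k-1}$ with $k-1\ge1$. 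Hence $\Omega$ contains a periodic point $p'\neq p$.

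The main obstacle is to rule out this last case, in which $W^s_{\tilde g}(p)$ accumulates on a hyperbolic periodic orbit $p'\neq p$ of $\tilde g$ inside $\bound$. Here I would run an inclination-lemma argument at $p'$: accumulation of the $\tilde g$-invariant submanifold $W^s_{\tilde g}(p)$ on $p'$ places both invariant manifolds of $p'$ --- of dimensions $k-1\ge1$ and $n+1-k\ge1$ --- inside $\Omega\subset K_\epsilon$, so one is again confining positive-dimensional invariant submanifolds to the fixed compact set $K_\epsilon$; iterating yields periodic saddles $p=p_0,p_1,p_2,\dots$ of $\tilde g$ inside $K_\epsilon$ all of whose invariant manifolds lie in $K_\epsilon$, and the contradiction must be extracted from this configuration together with the structure of boundary leaves --- in particular that there are only finitely many of them, that $G_\epsilon$ is open, backward $\tilde g$-invariant and has compact closure, and that $K_\epsilon$ saturates a one-sided transverse Cantor boundary. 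That some such global input is genuinely required is clear, since soft dynamical arguments alone do not prevent a stable manifold of an Anosov diffeomorphism of a noncompact manifold from being relatively compact. Once this case is eliminated, $k\ge 2$ is impossible, so $\restrict{\dim E^s}{\Lambda}=1$.
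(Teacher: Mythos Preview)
Your setup and the first half of the argument match the paper closely: the passage to the Anosov map $\tilde g$ on a boundary leaf $\bound\cong\R^n$, the boundary periodic point $p$, and the containment $W^s_{\tilde g}(p)\subset K_\epsilon$ are exactly the ingredients the paper assembles, and your dichotomy on $\NW(\restrict{\tilde g}{\Omega})$ is a perfectly good variant of the paper's argument that $\Gamma=\overline{\bigcup_{x\in W^s_{\tilde g}(p)}\alpha(x)}$ is infinite. Where you stall, however, is precisely where the paper brings in its key global tool, and your inclination-lemma sketch does not get there: even granting an infinite string of periodic saddles with invariant manifolds trapped in $K_\epsilon$, you have no mechanism to contradict anything, and you yourself note this.

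The missing idea is the Lefschetz fixed point theorem on the one-point compactification $\bound\cup\{\infty\}\cong S^n$. The paper first shows (your argument, or its close cousin) that $\tilde g$ has infinitely many periodic points on $\bound$. It then compactifies $\tilde g$ to $\overline g\colon S^n\to S^n$ and proves (Claim~\ref{claim:infty}) that for all sufficiently large iterates, $\infty$ is an isolated fixed point of index $+1$: a sphere $S\subset\bound$ separating $\infty$ from $K_\eta$ is mapped entirely outside the disk it bounds by high iterates of $\tilde g$, since points of $S$ are not in $K_\eta$ and hence both sides of $\locStab[\eta]{x}\sm\Lf_\eta(x)$ meet $\Lambda$, forcing $\tilde g^k(S)$ away from $K_\eta$. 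Every hyperbolic fixed point in $\bound$ has the same index $\pm 1$ (determined by $\dim E^u$ and orientation data), and the Lefschetz number of any map $S^n\to S^n$ lies in $\{-2,0,2\}$; thus each iterate $\overline g^k$ can have at most three fixed points, contradicting the infinitude of periodic points. That index computation at $\infty$, not further dynamical propagation inside $K_\epsilon$, is the ``global input'' you correctly sensed was needed.
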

\begin{proof}
By Proposition \ref{thm:UnfTrans} we have that $E^s(x)$ in general position with respect to $T_x (\Lambda)$ for every $x\in \Lambda$.  Assume that $2\le \dim E^s\le n$.  Let $p$ be a periodic point in a boundary leaf $\bound$, pass to an iterate such that $p$ is fixed by $g$, and denote by $\tilde g$ the induced diffeomorphism on $\bound$.  Let $\Gamma =\overline{\bigcup_{x\in W^s_{\tilde g}(p)} \alpha (x)} \subset K_\epsilon$.  Note that $\Gamma $ must be infinite.  Indeed if $\Gamma$ were finite, then by Lemma \ref{lem:FinAsym},
we would have $ W^s_{\tilde g}(p)\subset W^u_{\tilde g}(\Gamma)$.  Furthermore, 
the uniform hyperbolicity of $\restrict  g \Lambda$ would guarantee that $ W^s_{\tilde g}(p)$ and $W^u_{\tilde g} (\Gamma)$ intersect transversally, whence $ W^s_{\tilde g}(p)\cap W^u_{\tilde g}(\Gamma)$ is at most countable.  However if $\dim E^s\ge 2$ then $W^s_{\tilde g}(p)$ contains a continuum.  Thus we conclude that $\Gamma$ is an infinite hyperbolic set for $\tilde g$ contained in $ \NW(\tilde g)$, which by Proposition \ref{cor:PerDense} implies that there are an infinite number of periodic points for $\tilde g$ (hence for $g$) on the boundary leaf $\bound$.

Now $\tilde g\colon \bound \to \bound$ naturally induces a homeomorphism of the one-point compactification $\overline g\colon   S^n\to S^n$ that projects to  $\tilde g$ with  a fixed point at $\infty$.  
\begin{claim}\label{claim:infty}
For some $N$ and all $k\ge N$, $\infty$  is an isolated fixed point for  $\overline g^k$, with fixed point index $+1$.  
\end{claim}
\begin{proof}
Let $0<\eta<\epsilon_0$ be such that $ p \in K_\eta$.  
Let $S$ be the unit sphere at $\infty$ in an appropriately chosen Euclidean chart such that $S$ separates $\infty$ from $K_\eta$.  We note that any fixed point for $\tilde g^k$ must be contained in $K_\eta$, hence $S$ is an isolating sphere for all iterates of $\tilde g$. 
Let $D\subset \bound$ be the disk bounded by $S$.  We may cover $D$ with a finite number of $\{G_{\epsilon_j}\}$.  Let $\delta$ be the minimum $\epsilon_j$ in this cover.  Since $S$ is disjoint from $K_\eta$, we know that for all $x\in S$, both components of $W^s_\eta(x)\sm \Lf_\eta(x)$ intersect $\Lambda$.  We may find an $N$ such that $\lambda^N\eta < \delta$.  But then $\tilde g^k(S)$ is disjoint from $D$ for all $k\ge N$.  Thus, considering $S$ as the unit sphere at $\infty$ we have $|\bar g^k(x)| < |x|$ for all $k\ge N$ and $x\in S$.  Thus $\dfrac{\Id - \bar g^k}{\|\Id -\bar g^k\|}\colon S\to S$ is homotopic to the identity map, hence has degree $+1$.  
\end{proof}

Now the Lefschetz number for any homeomorphism of $S^n$ is either $-2, 0, $ or $2$.  Furthermore, since $\tilde g\colon \bound \to \bound$ is hyperbolic, all fixed points of $\tilde g^n$ are isolated and the index of every fixed point of $\tilde g^n$ in $\bound$ can be computed in terms of the dimension of $E^u$ and whether $\tilde g$ preserves the orientation of the distribution $E^u$.  Thus all fixed points of $\tilde g^n$ have the same index, and thus for any $n>N$ as in Claim \ref{claim:infty} the number of fixed points of $\tilde g^n$ is at most $3$ contradicting the infinitude of periodic points if $\dim E^s\ge 2$.  Hence we must have $\dim E^s = 1$. \end{proof}


\subsection{Proof of the Theorems \ref{thm:main} and \ref{thm:main2}}
\

\begin{proof}[{Proof of Theorem \ref{thm:main}.}]
By passing to the inverse if necessary, from Proposition \ref{thm:UnfTrans} we can conclude for every $x\in \Lambda$ that $E^s(x)$ is in general position with respect to $T_x \Lambda$. 
By Proposition \ref{thm:dim},  $\dim E^s(x) = 1$ for all $x\in \Lambda$, hence applying Lemma \ref{lem:alphaOmega} to a boundary periodic point $p$,  one then has $W^u(p) = \Lf(p)$.  Thus $\overline {W^u(p)} =\overline {\Lf(p)} = \Lambda $ is a hyperbolic attractor for $g$.  By Proposition \ref{cor:topologically mixingAttractor} we know that $\Lambda$ contains a topologically mixing attractor $\Lambda'$ for some iterate of $g$.   But then for any $x\in \Lambda'$, we  have $W^u(x)=\Lf (x)$ is dense in $\Lambda'$, thus 
$\Lambda = \Lambda'$, and $\Lambda$ is a mixing   expanding attractor of codimension 1 for $g$.  
\end{proof}

\begin{proof}[{Proof of Theorem \ref{thm:main2}.}]
If $\Lambda$ is a transitive  expanding hyperbolic   attractor of codimension 1 for $f$, then by Remark \ref{lem:perfecttrans}, $\NW(\restrict f \Lambda ) = \Lambda$.  By the spectral decomposition it decomposes into a finite number of topologically mixing attractors $\{\Omega_j\}$ for some iterate of $f$.  Note that each $\Omega_j$ is connected.  Since $g$ preserves $\Lambda$ it must permute its connected components, hence there is some $k$ such that $g^k$ fixes each $\Omega_j$.  Applying Theorem \ref{thm:main} to $\restrict{g^k} {\Omega_j}$ we conclude that $\Omega_j$ is a topologically mixing  expanding   attractor (or contracting repeller) of codimension 1 for $g^k$.  But then for a fixed $j$ the set $\Lambda_j = \bigcup_i g^i(\Omega_j)$ is a transitive expanding  hyperbolic   attractor (or contracting repeller)  of codimension 1 for $g$.
\end{proof}


\section{Proof of Theorem \ref{thm:LM}.}
\newcommand{\clocUnst}[2][\epsilon]{\check W^u_{#1}(#2)}
\newcommand{\clocStab}[2][\epsilon]{\check W^s_{{#1}}(#2)}
The proof of Theorem \ref{thm:LM} follows  from   Lemma \ref{lem:trivialsplitting}, and  Claim \ref{clm:cantor} and Theorem  \ref{thm:LMTD} below. 
\begin{proof}[{Proof of Theorem \ref{thm:LM}.}]
Let $\Lambda$ be a nonwandering, locally maximal, compact hyperbolic set for a surface diffeomorphism $f\colon S\to S$.  Fix $V$ open so that $\Lambda = \bigcap _{n\in \Z} f^n(V)$.  We note that by Proposition \ref{cor:PerDense} we have $\Lambda \subset \overline{\Per (\restrict f V)}$ hence by local maximality $\Lambda = \overline{\Per (\restrict f \Lambda)}$ and in particular $\Lambda = \NW( \restrict f \Lambda)$.  Thus the spectral decomposition applies and we may assume without loss of generality that $\Lambda$ is topologically mixing.  
If $\dim E^\sigma(x) = {0}$ for some $x\in \Lambda$ and $\sigma\in \{s,u\}$ then Lemma \ref{lem:trivialsplitting} implies that $\Lambda$ is a periodic orbit, hence Theorem 1.4 holds trivially.  

Thus, let us assume that $\dim E^\sigma(x) = 1$ for both $\sigma\in \{s,u\}$.    We exhaust the following 3 cases.  

\begin{case} Suppose  that $\Lambda$ has nonempty interior.  Then by Theorem 1 of \cite{MR2199438}, $\Lambda = S$ and thus Theorem \ref{thm:LM} holds trivially.\end{case}

\begin{case} Next we consider the case when $\Lambda$ has empty interior but contains a topologically embedded curve $\gamma$.  Let $\delta, \epsilon$ be as in the definition of the local product structure for $\Lambda$ under the dynamics of $f$ (see Definition \ref{def:LPS}).  We note that under these hypotheses, either $\gamma \subset W^u(x)$ or $\gamma \subset W^s(x)$ for some $x\in \Lambda$ since otherwise
the set $$\bigcup_{\{z,y\in \gamma\mid d(y,z)<\delta\}} \locUnst y \cap \locStab z\subset \Lambda$$ would have nonempty interior.  Without loss of generality assume $\gamma \subset W^u(x)$ for some $x\in \Lambda$. Topological mixing thus  implies $\unst x\subset \Lambda$ for all $x\in \Lambda$. Since $\Lambda$ has empty interior, this implies $\Lambda$ is a nontrivial mixing hyperbolic attractor, hence the conclusion of Theorem \ref{thm:LM} is true by Theorem \ref{thm:Fisher}.
\end{case}
\begin{case} Finally we  assume that $\Lambda$ has empty interior, and that no curve may be topologically embedded in $\Lambda$. In particular, no curve may be embedded in  $\stab x$ or $\unst x$ for any $x \in \Lambda$.  Thus $\stab x \cap \Lambda$ and $\unst x \cap \Lambda$ are totally disconnected, which by the local product structure on $\Lambda$ implies that  $\Lambda$ is totally disconnected.  

By Claim \ref{clm:cantor} below, either $\Lambda$ is finite, or $\Lambda$ is locally the product of two Cantor sets.  In the former case, the proof of the conclusion to Theorem \ref{thm:LM} is trivial; the conclusion in the latter case follows from Theorem \ref{thm:LMTD} below. \qedhere \end{case} \end{proof}

We proceed with the statement and proof of 
 Claim \ref{clm:cantor}.

\begin{claim}\label{clm:cantor}
Let $\Lambda$ be a compact, topologically mixing, locally maximal, totally disconnected hyperbolic set for a surface diffeomorphism $f\colon M\to M$.  Then either $\Lambda$ is finite, or $\overline{W^\sigma_\epsilon (x) }\cap \Lambda$ is a Cantor set for every $x\in \Lambda$ and $\sigma\in \{s, u \}$.  
\end{claim}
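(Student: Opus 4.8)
The plan is to show that either $\Lambda$ is finite or no local stable/unstable slice $\overline{W^\sigma_\epsilon(x)}\cap\Lambda$ has an isolated point; since such a slice is a compact, totally disconnected metric space, being perfect forces it to be a Cantor set. By symmetry (passing to $f^{-1}$ if needed) and by the local product structure, it suffices to rule out isolated points in the unstable slices. So first I would suppose that some $x\in\Lambda$ has a point $y\in\overline{W^u_\epsilon(x)}\cap\Lambda$ that is isolated in that slice (where $\epsilon$ is taken small enough that the local product structure of Definition \ref{def:LPS} is available, say $\epsilon=\delta$), and aim to derive that $\Lambda$ is finite.

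The key step is to propagate this isolated point via the dynamics and the mixing hypothesis to conclude that \emph{every} local unstable slice is a single point, i.e. $W^u_\epsilon(z)\cap\Lambda=\{z\}$ for all $z\in\Lambda$. First, using the canonical isomorphism (Definition \ref{def:CI}) and local product structure, the property of having an isolated point in the local unstable slice transfers along stable manifolds: if $y$ is isolated in $\overline{W^u_\epsilon(x)}\cap\Lambda$ and $z\in W^s_\epsilon(y)\cap\Lambda$ is nearby, then the $u$-holonomy carries a neighborhood of $y$ in its slice to a neighborhood of $z$ in $\overline{W^u_\epsilon(z)}\cap\Lambda$, so $z$ is isolated in its own unstable slice. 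Next, the property transfers under $f$: since $f(W^u_\epsilon(f^{-1}(y)))\supset W^u_\epsilon(y)$ and $f$ is a homeomorphism, having an isolated point in a local unstable slice is preserved (up to shrinking $\epsilon$) under both $f$ and $f^{-1}$. Combining these two mechanisms with topological mixing — which makes the stable-saturated orbit of any point dense — I would argue that the set of points whose local unstable slice is trivial is open (clear from continuity of the lamination), nonempty, invariant, and saturated by stable holonomy, hence dense, hence all of $\Lambda$ by openness and compactness. Actually the cleanest route: let $U=\{z\in\Lambda: z \text{ is isolated in } \overline{W^u_\epsilon(z)}\cap\Lambda\}$; show $U$ is open and $f$-invariant and stable-holonomy-invariant; mixing gives $U$ dense; openness gives $U=\Lambda$.

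Once $W^u_\epsilon(z)\cap\Lambda=\{z\}$ for all $z$, the global unstable manifold meets $\Lambda$ in a discrete set, and more importantly the local product structure then says that a local product chart $\phi\big((W^u_\delta(x)\cap\Lambda)\times(W^s_\delta(x)\cap\Lambda)\big)$ is homeomorphic to $\{x\}\times(W^s_\delta(x)\cap\Lambda)$, i.e. locally $\Lambda$ sits inside a single stable manifold. Then $\Lambda$ is locally the stable slice, which is totally disconnected; but I also want to rule out an infinite such $\Lambda$. Here I would invoke that $\Lambda=\overline{\Per(\restrict f\Lambda)}$ (Proposition \ref{cor:PerDense}) together with the observation that if $\Lambda$ lies locally in stable manifolds then it contains no nontrivial recurrence transverse to the stable direction — more concretely, a periodic point $p$ would have $W^u_\epsilon(p)\cap\Lambda=\{p\}$, so $p$ is isolated among periodic points in the stable-slice topology, and iterating $f$ contracts the whole local picture into $W^s_\epsilon(p)$, which by mixing forces $\Lambda\subset W^s(p)$; an attractor-type argument on $f^{-1}$ then collapses $\Lambda$ onto the orbit of $p$, giving finiteness. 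The main obstacle I anticipate is exactly this last bookkeeping: carefully showing that triviality of \emph{all} local unstable slices, combined with mixing, forces $\Lambda$ to be a single periodic orbit rather than merely a totally disconnected set inside one stable leaf — the clean way is probably to note that $\Lambda$ then has a local product structure with a one-point factor, so $\Lambda$ is locally homeomorphic to its stable slice $C$, and then run the argument symmetrically using that $C$ itself, being a local unstable slice for $f^{-1}$ after relabeling, must also be trivial, leaving $\Lambda$ locally a point, hence (being compact) finite.
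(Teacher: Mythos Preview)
Your overall strategy---assume an isolated point exists in some unstable slice, propagate it to show all slices are trivial, then deduce finiteness---is reasonable, but the argument as written has a genuine gap at the key propagation step. You claim that once the set $U=\{z\in\Lambda: z\text{ is isolated in }\overline{W^u_\epsilon(z)}\cap\Lambda\}$ is shown to be open, $f$-invariant, and dense, ``openness gives $U=\Lambda$.'' This is false: open and dense does not imply all of $\Lambda$. Concretely, in a local product chart $A\times B$ with $A=W^u_\delta(z)\cap\Lambda$, your set $U$ corresponds to $(\text{isolated points of }A)\times B$; the isolated points of a compact subset of $\mathbb R$ can be dense without being everything (think of $\{0\}\cup\{1/n\}$). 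So you have not ruled out accumulation points in the unstable slices. The gap is fixable: first iterate \emph{forward} so that the isolation radius exceeds $\epsilon$ (unstable expansion), producing a point $z_0$ with $W^u_\epsilon(z_0)\cap\Lambda=\{z_0\}$; then argue with the set $V=\{z:W^u_\epsilon(z)\cap\Lambda=\{z\}\}$, which one can check is both open and closed via the product structure. Your endgame is also shaky: the ``symmetric'' argument (apply the same reasoning to $f^{-1}$) is circular, since you have no hypothesis that a \emph{stable} slice has an isolated point. A cleaner finish, once all unstable slices are singletons, is to cover $\Lambda$ by finitely many product charts---each of which then lies in a single local stable manifold---and note that $f^n$ shrinks each such chart to a point, forcing $|\Lambda|$ to be at most the number of charts.

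By contrast, the paper's proof is direct and avoids all of this propagation machinery. It simply shows that if $\Lambda$ is infinite then no point is isolated in either slice, handling periodic and non-periodic points separately. For a periodic $p$, take a point $x$ with both forward- and backward-dense orbit (mixing gives this), push a sequence $f^{n_j}(x)\to p$ through the product chart at $p$ to get $(z_j,w_j)\to(p,p)$, and observe that $z_j\neq p$ and $w_j\neq p$ for all $j$ (else $x$ would lie on $W^u_\epsilon(p)$ or $W^s_\epsilon(p)$, contradicting bi-density). For a non-periodic $y$, approximate by distinct periodic points $p_j\to y$ and note their product-chart coordinates $(z_j,w_j)$ must be pairwise distinct in each factor (distinct periodic points cannot share a local stable or unstable manifold). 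This two-case argument is shorter and sidesteps the clopen-set bookkeeping your approach would require.
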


\begin{proof}
Assume that $\Lambda$ is not finite.  By Lemma \ref{lem:trivialsplitting} we may assume that $\dim E^\sigma(x) = 1$ for all $x\in \Lambda$ and $\sigma \in \{s,u\}$.  To establish the claim we show that $\overline{W^\sigma_\epsilon(x)}\cap\Lambda$ is perfect for every $x\in \Lambda$  and $\sigma \in \{s,u\}$.  

Let $p\in \Lambda$ be periodic.   If $\Lambda$ is mixing then it contains a point $x$ whose orbit is both forwards and backwards dense in $\Lambda$.   Let $(V, \epsilon) $ be a local product chart at $p$ and let $\phi$ be the canonical homeomorphism $\phi\colon (\locStab p \cap \Lambda ) \times ( \locUnst p \cap \Lambda) \to V\cap \Lambda$.  There exists a sequence $\{n_j\}$ so that $f^{n_j}(x) \to p$ with $\{f^{n_j}(x) \} \subset V$.  Let $(z_j, w_j) $ be such that $\phi((z_j, w_j) ) = f^{n_j}(x) $.  Then $z_j \to p$ and $w_j \to p$.  Furthermore, we must have $z_{j} \neq p$ for any $j$, since otherwise we would have $f^N(x) \in \locUnst p$ for some $N$, which contradicts that $x$ has a backwards dense orbit.  Similarly  $w_{j} \neq p$ for any $j$.  Thus $p$ is not isolated in $W^\sigma_\epsilon(p) \cap \Lambda$.  

Similarly, let $y\in \Lambda$ be non-periodic.   By Proposition \ref{cor:PerDense} the periodic points of $\restrict f  \Lambda$ are dense in $\Lambda$ hence there is a sequence $\{p_j\}$ of distinct periodic points accumulating on $y$.   As above, let $(V, \epsilon) $ be a local product chart at $y$, let $\phi$ be the canonical homeomorphism, and let $(z_j, w_j) $ be such that $\phi((z_j, w_j) )= p_j$.  Then if $j \neq k$ we must have $z_j \neq z_k$ and $w_j \neq w_j$, since otherwise we would have either $p_k \subset \locUnst{p_j} $ or $p_k \subset \locStab {p_j}$.  But then we clearly can find infinite subsequences of $\{z_j\}$ and $\{ w_j \}$  disjoint from $\{y\}$.  Hence $y $ is not isolated in $W^\sigma_\epsilon (y) \cap \Lambda$.  
\end{proof}

\subsection{Proof of Theorem \ref{thm:LMTD}.}
The proof of Theorem \ref{thm:LM} given above will be complete after proving the following.  

\begin{thm}\label{thm:LMTD}
Let $f\colon S\to S$ be a 
diffeomorphism of a surface $S$, and let $\Lambda$ be a topologically mixing, locally maximal, totally disconnected,  compact hyperbolic set for $f$ containing an infinite number of points. 
Assume $g\colon S\to S$ is a second diffeomorphism such that $\Lambda$ is hyperbolic for $g$.   Then $\Lambda$ is a locally maximal set for $g$.
\end{thm}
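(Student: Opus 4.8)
The plan is to show that $\Lambda$ carries a local product structure for $g$ in the sense of Definition \ref{def:LPS}; since for compact hyperbolic sets local maximality is equivalent to the existence of a local product structure, this is exactly the content of Theorem \ref{thm:LMTD}. Begin with reductions. Because $\Lambda$ is infinite, topologically mixing, locally maximal and totally disconnected, Claim \ref{clm:cantor} shows $\overline{W^\sigma_{f,\epsilon}(x)}\cap\Lambda$ is a Cantor set for every $x\in\Lambda$ and $\sigma\in\{s,u\}$; combined with the local product structure of $\Lambda$ for $f$, this makes $\Lambda$ locally homeomorphic to a product of two Cantor sets, and in particular $\Lambda$ is perfect. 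By Lemma \ref{lem:trivialsplitting} we then have $\dim E^s_g(x)=\dim E^u_g(x)=1$ for all $x$, so the local $g$-manifolds $W^u_{g,\epsilon'}(x)$ and $W^s_{g,\epsilon'}(x)$ are transverse $C^1$ arcs through $x$. Write $C^u_f(x)=W^u_{f,\epsilon}(x)\cap\Lambda$ and $C^s_f(x)=W^s_{f,\epsilon}(x)\cap\Lambda$ for the (Cantor) $f$-slices of $\Lambda$ at $x$, both accumulating at $x$.

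The heart of the proof is the following \emph{alignment} statement, which I expect to be the main obstacle: there exist $\epsilon,\epsilon'>0$ such that for every $x\in\Lambda$ there is a choice $\sigma(x)\in\{s,u\}$ with $C^u_f(x)\subset W^{\sigma(x)}_{g,\epsilon'}(x)$ and $C^s_f(x)\subset W^{\sigma'(x)}_{g,\epsilon'}(x)$ where $\{\sigma(x),\sigma'(x)\}=\{s,u\}$, and moreover $x\mapsto\sigma(x)$ is locally constant on $\Lambda$. Two things must be shown. First, no $f$-slice, say $C^u_f(x)$, can meet $\Lambda$ near $x$ in points lying off \emph{both} local $g$-manifolds of $x$: here I would argue by contradiction, using expansivity of $g|_\Lambda$ together with the topological mixing of $f$ — forward $f$-iterates of an $f$-unstable slice become dense in $\Lambda$, which allows a hypothetical "transverse" configuration to be transported until it contradicts expansivity of $g$ (this is the step most parallel to Fisher's analysis). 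Second, $C^u_f(x)$ and $C^s_f(x)$ cannot land on the \emph{same} $g$-manifold: if, say, both lay in $W^u_{g,\epsilon'}(x)$, then for $y_n\to x$ in the nontrivial Cantor set $C^u_f(x)$ the secants $(y_n-x)/|y_n-x|$ converge to the unit tangent of $W^u_{f}(x)$, while the same secants computed along $W^u_{g,\epsilon'}(x)$ converge to the unit tangent of $W^u_g(x)$, forcing $E^u_g(x)=E^u_f(x)$; the analogous argument for $C^s_f(x)$ forces $E^u_g(x)=E^s_f(x)$, contradicting transversality of the $f$-splitting. Local constancy of $\sigma$ then follows from continuity of the $f$- and $g$-local manifolds together with the $f$-holonomy (canonical isomorphism) between nearby $f$-slices.

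Granting the alignment statement, a local product structure for $g$ follows. Fix $x\in\Lambda$ and a neighborhood on which $\sigma$ is constant; the cases $\sigma(x)=u$ and $\sigma(x)=s$ being symmetric (interchange the roles of the two $f$-slices), assume $C^u_f(y)\subset W^u_{g,\epsilon'}(y)$ and $C^s_f(y)\subset W^s_{g,\epsilon'}(y)$ for all $y\in\Lambda$ near $x$. For $y,z\in\Lambda$ with $d(y,z)$ small, the local product structure of $\Lambda$ for $f$ yields $w:=W^s_{f,\epsilon}(y)\cap W^u_{f,\epsilon}(z)\in\Lambda$; since $w\in C^s_f(y)\subset W^s_{g,\epsilon'}(y)$ and $w\in C^u_f(z)\subset W^u_{g,\epsilon'}(z)$, and since for $d(y,z)$ small $W^s_{g,\epsilon'}(y)\cap W^u_{g,\epsilon'}(z)$ is a single point, we conclude $W^s_{g,\epsilon'}(y)\cap W^u_{g,\epsilon'}(z)=w\in\Lambda$. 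Hence the bracket map of Definition \ref{def:LPS} for $g$ is well defined with image in $\Lambda$ near $x$, and being a continuous bijection from a compact space it is a homeomorphism onto its image, so $\Lambda$ has a local product structure for $g$ near $x$. Taking the constants uniform over the compact set $\Lambda$ (using local constancy of $\sigma$) produces a global local product structure for $g$, so $\Lambda$ is locally maximal for $g$, proving Theorem \ref{thm:LMTD} and thereby completing the proof of Theorem \ref{thm:LM}.
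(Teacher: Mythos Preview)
Your overall architecture is right and matches the paper's: establish an alignment statement (the content of Proposition~\ref{lem:cont}) and then read off a local product structure for $g$ from the one for $f$. Your final paragraph is essentially the paper's proof of Theorem~\ref{thm:LMTD} verbatim, and your secant argument in part~(b) is a clean alternative to the paper's Corollary~\ref{cor:tang2}.

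The genuine gap is part~(a). You propose to show that every point of $C^u_f(x)$ near $x$ lies on $W^s_{g,\epsilon'}(x)\cup W^u_{g,\epsilon'}(x)$ by ``transporting a transverse configuration with forward $f$-iterates until it contradicts expansivity of $g$.'' But these two dynamics do not interact in the way you need: iterating by $f$ preserves the $f$-foliations and moves points around inside $\Lambda$, yet it has no control whatsoever over the $g$-local manifolds (which are defined by $g$-orbits, and $f,g$ do not commute). Conversely, expansivity of $g$ only tells you that some $g$-iterate separates $x$ from $y$; it says nothing about the relative position of $y$ with respect to the $g$-manifolds of $x$, and nothing about $f$-iterates. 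So the mechanism, as stated, does not produce a contradiction. Note also that your secant argument in~(b) is \emph{conditional} on~(a): it shows that if a sequence $y_n\to x$ in $C^u_f(x)$ lies on $W^\sigma_{g}(x)$ then $E^u_f(x)=E^\sigma_g(x)$, but it gives no information about points $y\in C^u_f(x)$ lying on neither $g$-manifold, and a~priori there is no reason such points should not exist.

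The paper's route through this obstacle is quite different and does not use expansivity or mixing at all at this stage. It first proves a purely topological fact about the homeomorphism $g$ (Lemma~\ref{lem:4.1}): $g$ maps each $f$-slice $\check\Gamma^k_\eta(x)$ into a single $f$-slice $\check\Gamma^j_{\eta'}(g(x))$. The argument exploits the density of ``boundary points'' (endpoints of gaps in the Cantor slices) together with a counting argument: if $g(\Gamma^k_\eta(x))$ were genuinely transverse to both $f$-foliations on an arc, one could find on that arc a point of $\Lambda$ whose image under $g$ has no boundary property in either direction, contradicting that $g$ is a homeomorphism preserving $\Lambda$. From Lemma~\ref{lem:4.1} one immediately gets the tangential alignment $E^\sigma_g(x)=T_x\Gamma^{\tau^\sigma(x)}(x)$ (Corollaries~\ref{cor:tang} and~\ref{cor:tang2}). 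The full inclusion $\check\Gamma^1_\epsilon(x)\subset \check W^s_{g,\epsilon}(x)$ is then Proposition~\ref{lem:cont}, whose proof is a metric contradiction argument using only the hyperbolic rates of $g$: if some $y\in\check\Gamma^1_\delta(x)$ had $w:=W^s_{g,\eta}(y)\cap W^u_{g,\eta}(x)\neq x$, then iterating by $g$ pushes $g^n(w)$ away from $g^n(x)$ along $W^u_g$ while keeping $g^n(y)$ on $\Gamma^1_{2\eta}(g^n(x))$ (this last uses Lemma~\ref{lem:4.1} inductively) and close to $g^n(w)$; eventually this forces $W^u_{g,\eta}(g^n(x))$ to come closer to $\Gamma^1_{2\eta}(g^n(x))$ in the annulus $A(g^n(x))$ than the uniform lower bound $\rho>0$ allows. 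The key input you are missing is precisely Lemma~\ref{lem:4.1}: once you know $g$ respects the $f$-slices, the $g$-hyperbolicity does the rest.
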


To simplify notation in the proof of Theorem \ref{thm:LMTD} we will suppress the dynamics of $f$ and denote by $\Gamma^1_\epsilon(x)$ and $ \Gamma^2_\epsilon(x)$ the local stable and unstable manifolds of $x$ for $f$.  Write $\check \Gamma ^j _\epsilon (x):= \Gamma ^j_\epsilon (x)\cap \Lambda$.   Note that by Lemma \ref{lem:trivialsplitting}, $\Gamma ^j_\epsilon (x)$ is a curve and that  by Claim \ref{clm:cantor}, $\check \Gamma^j_\epsilon(x)$ is a Cantor set.  Thus the notations $\locUnst x$ and $\locStab x$ are reserved for the local stable and unstable manifolds of $x$ under the dynamics of $g$.  Note that by Lemma \ref{lem:trivialsplitting} both $W_\epsilon^\sigma(x)$ are curves.  Write $\check W^\sigma_\epsilon(x) := W^\sigma_\epsilon(x) \cap \Lambda $  for $\sigma\in \{s,u\}$.

\begin{defn}
We say that $(V, \eta)$ is  a \emph{local $f$-product chart} centered at $x$ if  $V\cap \Lambda \cong  \check \Gamma^1_\eta(x)\times \check \Gamma^2_\eta(x)$ via the canonical homeomorphism  $\phi\colon (y,z)\mapsto \Gamma^2_\epsilon(y)\cap  \Gamma^1_\epsilon(z)$.
\end{defn}
That is, a local $f$-product chart is a local product chart at $x$ under the dynamics of $f$ (see Definition \ref{def:LPC}).  Note that we may always choose $V$ in such a way that  $\Gamma^j_\eta(x)$ separates $V$ into two components.   As before, if $V$ is a local $f$-product chart, for any $y \in \Lambda \cap V$ denote by $\Gamma^k_V(y)$ the connected component of $\Gamma^k(y)\cap V$ containing $y$.  

For $j\in \{1,2\}$ we will say that $x\in \Lambda$ has the \emph{$j$-boundary property} if $x$ is an endpoint of an open interval in $\Gamma^j_\eta(x)\sm \check \Gamma^j_\eta(x)$.  Alternatively, $x$ has the $1$-boundary property, if for any local $f$-product chart  $(V,\eta)$ with canonical homeomorphism $\phi\colon \check \Gamma^1_\eta (x) \times  \check  \Gamma^2_\eta(x) \to V\cap \Lambda$, we may find some $\delta>0$ and $U\subset V$ so that $U\cap \Lambda = \phi (\check \Gamma^1_\delta (x) \times  \check  \Gamma^{2}_\eta(x))$ and $\ \Gamma^{2}_\eta (x) $ separates $U$ into two components, one of which is disjoint from $\Lambda$.  A  similar equivalent definition holds for points with the $2$-boundary property.  We say that a point has the \emph{boundary property} if it has the $j$-boundary property for both $j\in \{1,2\}$.  Note that by Claim \ref{clm:cantor} such points are dense in $\Lambda$.  

We now show that the dynamics of $g$ preserves the product structure of $\Lambda$ for $f$.  
 
\begin{lem}\label{lem:4.1}
Under the hypothesis of Theorem \ref{thm:LMTD}, there are $\eta, \eta'>0$ so  that for all $x\in \Lambda$ and $k\in \{1,2\}$ there is a $j\in \{1,2\}$ such that  $g(\check \Gamma^k_\eta(x) ) \subset \check \Gamma^j_{\eta'} (g(x))$.
\end{lem}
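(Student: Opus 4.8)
The plan is to show that under $g$, a local piece of an $f$-stable or $f$-unstable Cantor fiber $\check\Gamma^k_\eta(x)$ cannot meet two distinct local leaves of the complementary $f$-foliation, so that its $g$-image—being contained in a single $g$-stable or $g$-unstable curve—must lie inside one $f$-Cantor fiber $\check\Gamma^j_{\eta'}(g(x))$. The key tension is between the \emph{uniform} local product structure of $\Lambda$ for $f$ (Definition~\ref{def:LPS}) and the fact that $g$-local manifolds are curves transverse to nothing in particular; the image of a $g$-local manifold has uniformly bounded ``thickness'' transverse to $f$-leaves, and this must be reconciled with the recurrence of points of $\Lambda$.

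First I would fix $\epsilon_0$ small enough to serve simultaneously as a radius of local stable/unstable manifolds for both $f$ and $g$, and as a radius below which the $f$-local product structure holds with constants $\delta<\epsilon$ as in Definition~\ref{def:LPS}; by uniform continuity of $Dg$ there is $\eta>0$ so that $g$ maps any set of $d$-diameter $<\eta$ into a set of diameter $<\delta$, and I set $\eta'=\epsilon$. Now suppose toward a contradiction that for some $x$ and some $k$ the connected curve $g(\Gamma^k_\eta(x))$ meets two distinct local leaves $\Gamma^i_V(u)\ne\Gamma^i_V(u')$ of the $f$-foliation transverse to $\check\Gamma^j$ inside a local $f$-product chart $V$ centered at $g(x)$, for \emph{both} choices of $j\in\{1,2\}$ (equivalently: $g(\check\Gamma^k_\eta(x))$ is not contained in any single $\check\Gamma^j_{\eta'}(g(x))$). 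Pulling back by $g\inv$ and using that $\Gamma^k_\eta(x)\cap\Lambda=\check\Gamma^k_\eta(x)$ is a \emph{totally disconnected} Cantor set contained in a single curve, I would argue that $g\inv$ applied to the local product chart structure forces a pair of points $y,y'\in\check\Gamma^k_\eta(x)$ whose images $g(y),g(y')$ lie in a common local $g$-manifold but in different $f$-leaves in both directions—i.e.\ $g(y)$ and $g(y')$ are joined both by a $g$-local manifold arc and, via the $f$-product structure at $g(x)$, the bracket $[\,g(y),g(y')\,]_f$ is a point of $\Lambda$ distinct from both. This is where I expect the main work: one must extract from the single failure a genuine local ``crossing'' of $g$-orbits with the $f$-product structure, and the cleanest way is to iterate. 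Since $\Lambda$ is topologically mixing and $\Per(\restrict f\Lambda)$ is dense (Proposition~\ref{cor:PerDense}), I can assume $x$ close to a periodic point and push forward/backward by $g$; hyperbolicity of $g$ then makes the separation between $g(y)$ and $g(y')$ along one of the two $g$-directions contract to zero while along the other it would have to expand past $\eta'$, but the $f$-product-chart containment caps it—contradiction. This contradiction rules out the bad configuration and yields the claimed $j$ for each $(x,k)$.

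The main obstacle, and the step I would spend the most care on, is making precise the dichotomy ``either $g$ collapses $\check\Gamma^k_\eta(x)$ into one $f$-fiber or it crosses two $f$-leaves'' and showing the latter is incompatible with hyperbolicity of $g$ \emph{and} the uniformity of the $f$-product structure; in particular one must be careful that $\eta,\eta'$ can be chosen uniformly in $x$, which is where compactness of $\Lambda$, continuity of both hyperbolic splittings, and the uniform constants $\delta<\epsilon$ from Definition~\ref{def:LPS} all enter. A secondary point to handle is the possibility $\dim E^\sigma_g=0$, but that is already excluded by Lemma~\ref{lem:trivialsplitting} since $\Lambda$ is perfect here, so both $W^\sigma_\epsilon(x)$ are genuine curves and the $g\inv$-image of a $g$-local manifold is again a curve, which is all the argument uses.
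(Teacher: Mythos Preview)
Your plan has a genuine gap: the mechanism you propose for the contradiction---iterating by $g$ and invoking hyperbolicity of $g$---cannot get off the ground, because at this stage of the argument there is no relationship between the $f$-manifolds $\Gamma^k_\eta(x)$ and the $g$-manifolds $W^\sigma_\epsilon(x)$. In particular, when you write that the $g$-image $g(\check\Gamma^k_\eta(x))$ is ``contained in a single $g$-stable or $g$-unstable curve,'' that is unjustified: $g(\Gamma^k_\eta(x))$ is merely a smooth curve in $S$, with no a priori relation to the $g$-invariant foliations. Likewise, the assertion that two points $y,y'\in\check\Gamma^k_\eta(x)$ have images ``in a common local $g$-manifold'' has no basis---they lie in a common local \emph{$f$}-manifold, which tells you nothing about their $g$-asymptotics. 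Consequently the contraction/expansion dichotomy you sketch (``hyperbolicity of $g$ then makes the separation \ldots\ contract to zero while along the other it would have to expand'') does not apply, because the separation you care about is measured along $f$-leaves, not along $g$-distributions. The bootstrapping you want---that the $f$- and $g$-structures align---is exactly the content of the later Corollaries~\ref{cor:tang} and \ref{cor:tang2} and Proposition~\ref{lem:cont}, all of which \emph{depend} on this lemma.

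The paper's proof uses none of the $g$-hyperbolicity and no iteration. It is a purely topological argument exploiting the Cantor structure of $\Lambda$. The key idea you are missing is the \emph{boundary property}: points $x$ at which $\check\Gamma^k_\eta(x)$ has a one-sided gap are dense in $\Lambda$, so by continuity it suffices to prove the lemma at such points. At a boundary point $x$, one component of $V\smallsetminus\Gamma^k_\eta(x)$ is disjoint from $\Lambda$; hence the same is true of $g(V)\smallsetminus g(\Gamma^k_\eta(x))$. Now if the image curve $g(\Gamma^k_\eta(x))$ failed to lie in a single $f$-fiber $\Gamma^j_{\eta'}(g(x))$, one shows (using that a smooth curve can be made transverse to at least one of the two $f$-foliations, so the coordinate projection to that Cantor factor is injective) that some sub-arc $U$ of the image curve projects injectively to \emph{both} Cantor factors. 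Since only countably many points of a Cantor set are gap-endpoints, $U\cap\Lambda$ contains a point whose $f$-coordinates are both two-sided accumulation points; hence $\Lambda$ accumulates on $g(\Gamma^k_\eta(x))$ from both sides in $g(V)$, contradicting the boundary property. This static Cantor-set counting argument is the crux, and it replaces entirely the dynamical iteration you are attempting.
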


\begin{proof} Compactness of $\Lambda$ guarantees that we may find  $\eta>0$ and $\eta'>0$ so  that for each $x\in \Lambda$ there exists open $V(x)$ and $V'(x)$ such that  $(V(x), \eta)$ is a local $f$-product chart centered at  $x$, $(V'(x), \eta')$ is a local $f$-product chart centered at $g(x)$, $g(V(x)) \subset V'(x)$, and $V(x) \sm \Gamma^k_\eta(x)$ has two components.  Fix a $k\in \{1,2\}$.  Since the families of curves $\{\Gamma^1_{\eta'}(x)\} $ and $\{\Gamma^2_{\eta'}(x)\}$ are transverse at each $x\in \Lambda$, and since $\Gamma^1_{\eta'}(g(x)) $, $\Gamma^2_{\eta'}(g(x))$, and $g(\Gamma^k_{\eta}(x)) $ are smoothly embedded curves, we may further reduce $\eta$ and $\eta'$ so that 
there exists a function $\tau\colon\Lambda \to \{1,2\}$ such that for every $x\in \Lambda$ the curve $g(\Gamma^k_\eta (x))$ intersects each curve in the family $\{\Gamma^{\tau(x)}_{V'(x)}(y)\} _{y \in V'(x)\cap \Lambda}$ in at most one point, and does so transversally. We fix this $\eta$  and $\eta'$ to be those guaranteed  in the lemma.

By the continuity of the family of local manifolds $\{\Gamma_\eta^k(y)\}_{y\in \Lambda}$ and of the map $g$, if  the conclusion to the lemma fails at a point $x\in \Lambda$, then it fails for  our fixed $k$ and $\eta$ at all points in some neighborhood of $x$ in $\Lambda$.  Hence, by the density of points with the boundary property, it is enough to check that the lemma holds at points with the boundary property.  

  Let $x$ be a point with the boundary property and assume the conclusion of the lemma is false.  Without loss of generality (by relabeling the $\Gamma^j$) suppose $\tau(x) = 2$.
Let $\phi$ denote the canonical  homeomorphism from $\check \Gamma^1_{\eta'} (g(x)) \times\check \Gamma^2 _{\eta'}(g(x))$ to $\Lambda \cap V' $ given by the local product structure for $\Lambda$ under $f$ (see Definition \ref{def:LPS}).

Let $\mathfrak{C}\subset [0,1]$ denote the middle-third Cantor set, and let $\psi_j$ be any homeomorphism between $\check \Gamma^j_{\eta'} (g(x))$ and a subset of $\mathfrak{C}$.  Then $\Xi= \psi_1 \times \psi_2$ is homeomorphism between $\phi\inv (V')$ and a subset of $\mathfrak{C}\times \mathfrak{C}$.  Let $\pi_j\colon \Xi(\phi\inv(V'))\to [0,1]$ be the coordinate projections.  Then the fact that $g(\Gamma^k_\eta (x))$ intersects each curve in $\{\Gamma^2_{V'}(y)\} _{y \in V'\cap \Lambda}$ in at most one point, implies that the function 
\[\pi_1 \circ \Xi\circ \phi \inv\colon g(\Gamma^k_\eta (x))\cap \Lambda \to [0,1]\]
is injective. 

If  $g(\check \Gamma^k_\eta(x) )= g(\Gamma^k_\eta(x) ) \cap \Lambda $ is not contained in   $\Gamma^1_{\eta'}(g(x)) $ 
then we may find some open $U\subset  g(\Gamma^k_\eta(x) ) $ such that  $U\cap \Lambda\neq \emptyset$ and 
$U$ intersects each curve in the family  $\{\Gamma^1_{V'}(y)\} _{y \in V'\cap \Lambda}$ in at most one point and does so transversally.  Then the function
\[\pi_2 \circ \Xi\circ \phi \inv\colon U \cap \Lambda \to [0,1]\]
is injective.  
Since each $\pi_j\circ \Xi\circ \phi\inv$ is injective on $U$, there are only countably many points in $\Xi(\phi\inv(U\cap \Lambda ))$ such that one coordinate is an endpoint of an open interval in $[0,1]\sm\mathfrak{C}$.  Hence we conclude that $U$ contains a point $\phi (\xi, \zeta)$ such that neither $\xi$ nor $\zeta$ is an endpoint of an open interval in $\Gamma^1_{\eta'}(g(x))\sm \check \Gamma^1_{\eta'}(g(x))$ or  $\Gamma^2_{\eta'}(g(x))\sm \check \Gamma^2_{\eta'}(g(x))$ respectively.
  Thus there are points of $\Lambda$ arbitrarily close to $g(\Gamma^k_\eta(x)  )$ in either component of $g(V)\sm g(\Gamma^k_\eta(x) )$.  But that implies there are points of $\Lambda$ arbitrarily close to $\Gamma^k_\eta(x)$ contained in either component of $V\sm \Gamma^k_\eta(x) $ contradicting that $x$ has the $\hat k$-boundary property where $\{\hat k\} = \{1,2\}\sm \{k\}$.
\end{proof}

\begin{cor}\label{cor:tang}
For each $x\in \Lambda$ there is a permutation $\tau_x$ of the set $\{1,2\}$ such that $D_xg (T_x \Gamma^k(x))  \subset T_{g(x)} \Gamma^{\tau_x(k)}(g(x)) $.
\end{cor}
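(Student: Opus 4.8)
The plan is to deduce Corollary~\ref{cor:tang} from Lemma~\ref{lem:4.1} by passing from the Cantor sets $\check\Gamma^k_\eta(x)$ to their ``tangent directions'' through an elementary secant-line argument. First I would fix $x\in\Lambda$ and $k\in\{1,2\}$ and invoke Lemma~\ref{lem:4.1} to obtain the uniform radii $\eta,\eta'>0$ and an index $j=j(x,k)\in\{1,2\}$ with $g\bigl(\check\Gamma^k_\eta(x)\bigr)\subset\check\Gamma^j_{\eta'}(g(x))$. By Claim~\ref{clm:cantor}, applied at the point $x$ itself, $\check\Gamma^k_\eta(x)$ is a Cantor set, in particular perfect, so $x$ is not isolated in it and I may pick points $x_n\in\check\Gamma^k_\eta(x)\sm\{x\}$ with $x_n\to x$; then $g(x_n)\to g(x)$, $g(x_n)\neq g(x)$, and $g(x_n)\in\check\Gamma^j_{\eta'}(g(x))$.

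Next I would run the secant argument in a coordinate chart about $g(x)$. The points $g(x_n)$ lie simultaneously on two $C^1$-embedded curves through $g(x)$: the image curve $g\bigl(\Gamma^k_\eta(x)\bigr)$, whose tangent line at $g(x)$ is $D_xg\bigl(T_x\Gamma^k(x)\bigr)$, and the curve $\Gamma^j_{\eta'}(g(x))$, whose tangent line at $g(x)$ is $T_{g(x)}\Gamma^j(g(x))$. Since the chords of a $C^1$ curve joining a basepoint to nearby points of the curve converge, as lines, to the tangent line at the basepoint, the common chords $\overline{g(x)\,g(x_n)}$ must converge to each of these two lines; hence $D_xg\bigl(T_x\Gamma^k(x)\bigr)=T_{g(x)}\Gamma^j(g(x))$. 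I then set $\tau_x(k):=j$, which is well defined because the two candidate target lines $T_{g(x)}\Gamma^1(g(x))$ and $T_{g(x)}\Gamma^2(g(x))$ are distinct (being the two summands of the hyperbolic splitting of $f$ at $g(x)$), so $j$ is forced by the identity just obtained. Finally, to see that $\tau_x$ is a permutation of $\{1,2\}$, I would note that if $\tau_x(1)=\tau_x(2)$ then the linear isomorphism $D_xg$ would carry the two lines $T_x\Gamma^1(x)$ and $T_x\Gamma^2(x)$ (distinct, as they are the two summands of the hyperbolic splitting of $f$ at $x$) to a single line, which is impossible; hence $\tau_x(1)\neq\tau_x(2)$ and $D_xg(T_x\Gamma^k(x))\subset T_{g(x)}\Gamma^{\tau_x(k)}(g(x))$ for $k=1,2$.

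The only genuinely delicate point is the passage from the set-level conclusion of Lemma~\ref{lem:4.1} to the tangent lines: one must be sure that $x$ \emph{itself} is accumulated by points of $\check\Gamma^k_\eta(x)$ (this is precisely where perfectness from Claim~\ref{clm:cantor} is needed, at the point $x$ and not merely on a dense subset), and that the secant-to-tangent limit may be taken simultaneously along the two curves $g(\Gamma^k_\eta(x))$ and $\Gamma^j_{\eta'}(g(x))$ through which the sequence $\{g(x_n)\}$ passes. Everything else is bookkeeping; in particular no uniformity in $x$ beyond what is already built into Lemma~\ref{lem:4.1} is required, since the assertion is pointwise.
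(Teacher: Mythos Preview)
Your proof is correct and follows essentially the same approach as the paper: both use Lemma~\ref{lem:4.1} together with the fact that $\check\Gamma^k_\eta(x)$ is a Cantor set (hence $x$ is accumulated in it) to force tangency of $g(\Gamma^k_\eta(x))$ and some $\Gamma^j_{\eta'}(g(x))$ at $g(x)$. The paper phrases this by contradiction via transversality (if the tangent lines differed, the intersection would be the single point $\{g(x)\}$), whereas you argue directly via secants; the paper also leaves the verification that $\tau_x$ is a permutation implicit, while you spell it out.
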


\begin{proof}Suppose that $D_x g(T_x \Gamma^k(x)) \cap T_{g(x)} \Gamma^{j}(g(x))  = \{0\}$ for both $j\in \{1,2\}$.  Then $g(\Gamma^k(x))$ is transverse to both $\Gamma^{j}(g(x))$ at $g(x)$, hence taking $\eta$ small enough we would have $g(\Gamma_\eta ^k(x)) \cap \Gamma_\eta^{j}(g(x)) = \{g(x)\}$ for both $j$, contradicting Lemma \ref{lem:4.1} and the fact that $\check \Gamma _\eta ^k(x)$ is a Cantor set.  
\end{proof}

\begin{cor}\label{cor:tang2}
There is a function $\tau^\sigma\colon \Lambda \to \{1,2\}$ so that for each $x\in \Lambda$ we have $E^\sigma(x) = T_{x} \Gamma^{\tau^\sigma(x)}(x) $. 
\end{cor}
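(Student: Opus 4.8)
The plan is to upgrade Corollary \ref{cor:tang}, which gives a pointwise permutation $\tau_x$ of $\{1,2\}$ aligning the invariant subspace $E^\sigma(x)$ with one of the tangent lines $T_x\Gamma^j(x)$, into a statement with a \emph{globally consistent} labelling. First I would fix $\sigma \in \{s,u\}$ and observe that Corollary \ref{cor:tang} already tells us that for each $x \in \Lambda$ the one-dimensional subspace $E^\sigma(x) \subseteq T_xS$ must coincide with $T_x\Gamma^1(x)$ or $T_x\Gamma^2(x)$: indeed, if $E^\sigma(x)$ were in general position with (i.e. transverse to) both tangent lines, that is impossible in a $2$-dimensional tangent space unless $E^\sigma(x)$ equals one of them, since any two distinct lines through the origin in $\R^2$ span the whole space, so a third line distinct from both is fine dimensionally — so I actually need the dynamical input, not just linear algebra. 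The point is that $E^\sigma$ is $Dg$-invariant, and $Dg$ maps the pair of lines $\{T_x\Gamma^1(x), T_x\Gamma^2(x)\}$ to the pair $\{T_{g(x)}\Gamma^1(g(x)), T_{g(x)}\Gamma^2(g(x))\}$ by Corollary \ref{cor:tang}; so define, for each $x$, $\tau^\sigma(x) := j$ if $E^\sigma(x) = T_x\Gamma^j(x)$, provided I first show this $j$ exists and is unique.

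For existence: suppose $E^\sigma(x)$ were transverse to both $T_x\Gamma^1(x)$ and $T_x\Gamma^2(x)$. Then the global $g$-manifold $W^\sigma(x)$ (which is tangent to $E^\sigma$ along $\Lambda$) is transverse at $x$ to both smooth curves $\Gamma^1_\eta(x)$ and $\Gamma^2_\eta(x)$. Iterating: the key is that hyperbolicity of $g$ contracts $W^s$ and expands $W^u$ while, by Corollary \ref{cor:tang} applied along the orbit, $Dg$ permutes the pair of $f$-tangent lines, so the angle between $E^\sigma$ and the $f$-directions cannot be destroyed. More concretely, I would run the argument in Corollary \ref{cor:tang} along backward or forward iterates: pick $n$ so that $g^{-n}$ (for $\sigma = u$) or $g^n$ (for $\sigma = s$) pushes a small piece of $W^\sigma_\eta(x)\cap\Lambda$ — a Cantor set with more than one point, by local maximality and Claim \ref{clm:cantor} — into a local $f$-product chart in which transversality to \emph{both} $\Gamma^j$ families would force, exactly as in the proof of Corollary \ref{cor:tang}, that $g(\Gamma^k_\eta)\cap\Gamma^j_\eta$ is a single point for both $j$, contradicting Lemma \ref{lem:4.1}. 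So $E^\sigma(x)$ must equal one of the two lines; uniqueness is immediate since $T_x\Gamma^1(x) \neq T_x\Gamma^2(x)$ (the $f$-splitting is a genuine splitting, $\dim E^s_f = \dim E^u_f = 1$). This gives a well-defined function $\tau^\sigma\colon \Lambda \to \{1,2\}$ with $E^\sigma(x) = T_x\Gamma^{\tau^\sigma(x)}(x)$.

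It remains to note $\tau^\sigma$ is a genuine function on all of $\Lambda$ (not just boundary-property points); this is automatic once existence and uniqueness are established pointwise, so no continuity or density argument is strictly needed for the statement as phrased — though I would remark that $\tau^\sigma$ is locally constant on $\Lambda$ by continuity of the splittings $E^\sigma$ and of the families $\{T_x\Gamma^j(x)\}$, which will be convenient downstream. The main obstacle is the existence step: ruling out that $E^\sigma(x)$ is transverse to both $f$-directions. The honest way to do this is to leverage Lemma \ref{lem:4.1} together with the fact that $\check W^\sigma_\eta(x) = W^\sigma_\eta(x)\cap\Lambda$ is a Cantor set (hence uncountable) and must, by Lemma \ref{lem:4.1}'s conclusion structure applied with $g$ replaced by an appropriate iterate, map into a single $f$-local-manifold; a curve transverse to both $\Gamma^j$-foliations in a product chart can only meet countably many distinct $f$-local-manifolds of each type through its own points in a structured way — the same endpoint-counting argument used inside the proof of Lemma \ref{lem:4.1} applies and produces the contradiction. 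I would present this as a short corollary-style argument citing Lemma \ref{lem:4.1} and Corollary \ref{cor:tang} rather than re-deriving the Cantor-set bookkeeping.
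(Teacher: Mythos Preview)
Your existence argument has a real gap. You claim that $W^\sigma_\eta(x)\cap\Lambda$ is a Cantor set ``by local maximality and Claim~\ref{clm:cantor},'' but Claim~\ref{clm:cantor} concerns the local manifolds of $f$ (the $\Gamma^j$), not those of $g$, and local maximality for $g$ is precisely what Theorem~\ref{thm:LMTD} is trying to establish. At this point in the argument there is no reason $\check W^\sigma_\eta(x)$ should contain any point other than $x$ itself, so the endpoint-counting contradiction you want to import from Lemma~\ref{lem:4.1} has nothing to work with. (You also slightly misstate Corollary~\ref{cor:tang}: it asserts that $D_xg$ permutes the pair of $\Gamma$-tangent lines, not that $E^\sigma(x)$ already lies in one of them.)

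The paper's proof avoids all of this with a short angle argument that uses Corollary~\ref{cor:tang} in the right way. Suppose neither $T_x\Gamma^1(x)$ nor $T_x\Gamma^2(x)$ equals $E^s(x)$. Then each has a nonzero $E^u$-component, so by hyperbolicity $\measuredangle\bigl(D_xg^n(T_x\Gamma^k(x)),\,E^u(g^n(x))\bigr)\to 0$ for both $k$, whence the angle between $D_xg^n(T_x\Gamma^1(x))$ and $D_xg^n(T_x\Gamma^2(x))$ tends to zero. But Corollary~\ref{cor:tang} says these images are again the pair $\{T_{g^n(x)}\Gamma^1(g^n(x)),\,T_{g^n(x)}\Gamma^2(g^n(x))\}$, and compactness of $\Lambda$ gives a uniform positive lower bound on $\measuredangle\bigl(T_y\Gamma^1(y),T_y\Gamma^2(y)\bigr)$ over $y\in\Lambda$ --- contradiction. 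This uses only the $Dg$-invariance of the $\Gamma$-line pair and standard hyperbolic cone contraction; no information about $\check W^\sigma$ is needed.
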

\begin{proof}We prove the result for $\sigma = s$.  For two subspaces $U,V\in T_x M$ let $\measuredangle (U,V)$ denote the angle between them.  
Suppose the conclusion fails at $x\in \Lambda$.  If $T_{x} \Gamma^{k}(x) \not \subset E^s(x)$ for both $k\in \{1,2\}$ then by hyperbolicity, \[\measuredangle \left(D_x g^n(T_x\Gamma^{k}(x)), E^u(g^n(x))\right) \to 0\quad \mathrm{as} \ n\to \infty \] for both $k\in \{1,2\}$ which implies that 
 \[\measuredangle \left(D_xg^n(T_x \Gamma^{1}(x)), D_xg^n(T_x \Gamma^{2}(x))\right) \to 0\quad \mathrm{as} \ n\to \infty \] 
 contradicting the fact that \[\min\left\{\measuredangle \left(T_{y} \Gamma^{1}(y), T_{y} \Gamma^{2}(y)\right) \mid y \in \Lambda\right\}>0\] by compactness of $\Lambda$.
\end{proof}

Note that the functions $\tau^\sigma$ are locally constant on $\Lambda$.  Hence up to  locally relabeling the manifolds $\Gamma_\epsilon^k(x)$ we may assume for every $x\in \Lambda $ that $E^s (x) \subset T_x \Gamma^1 (x)$ and $E^u (x) \subset T_x \Gamma^2 (x)$.

\begin{prop}\label{lem:cont}
Under the hypothesis of Theorem \ref{thm:LMTD}, and the relabeling above, there is an $\epsilon>0$ so that for every point  $x\in \Lambda$  we have
$\check \Gamma^{1}_\epsilon(x)\subset \check W^s_\epsilon(x)$, and 
$ \check \Gamma^{2}_\epsilon(x)\subset \check W^u_\epsilon(x)$.  
\end{prop}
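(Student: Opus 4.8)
The plan is to show that the Cantor set $\check\Gamma^1_\epsilon(x)$ — which we now know (Corollary \ref{cor:tang2} and the relabeling) is tangent to $E^s(x)$ at $x$ — actually lies in the $g$-local stable manifold of $x$, and symmetrically for $\check\Gamma^2_\epsilon(x)$ and $W^u_\epsilon(x)$. By passing to $g\inv$ it suffices to treat the stable case. The key point is that $\Gamma^1_\epsilon(x)$ is a single curve through $x$ tangent to $E^s(x)$, and a standard cone-field argument shows that a curve that stays tangent to a narrow stable cone field under all forward iterates must lie in $W^s$; so the work is to verify that the relevant \emph{points of $\Lambda$} on $\Gamma^1_\epsilon(x)$ stay $\epsilon$-close to the orbit of $x$ forever.

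First I would fix a uniform $\epsilon>0$ (shrinking the $\eta,\eta'$ from Lemma \ref{lem:4.1} and the radius $\epsilon_0$ of the local $g$-manifolds) small enough that: (i) for each $x$, $(V(x),\epsilon)$ and $(V'(x),\epsilon')$ are local $f$-product charts as in Lemma \ref{lem:4.1}, with $g(\check\Gamma^k_\epsilon(x))\subset\check\Gamma^{j}_{\epsilon'}(g(x))$; and (ii) the $g$-local product structure of $\Lambda$ (which exists since $\Lambda$, being topologically mixing and totally disconnected, has a $g$-invariant hyperbolic splitting with $\dim E^s=\dim E^u=1$ by Lemma \ref{lem:trivialsplitting}) is defined at scale $\epsilon$. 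Lemma \ref{lem:4.1} tells us that for our relabeling the ``$j$'' attached to $k=1$ is the constant $1$: indeed $g(\check\Gamma^1_\epsilon(x))$ is a subset of a single $f$-local manifold $\check\Gamma^{j}_{\epsilon'}(g(x))$, and since $D_xg$ carries $E^s(x)=T_x\Gamma^1(x)$ into $E^s(g(x))=T_{g(x)}\Gamma^1(g(x))$ by Corollary \ref{cor:tang2}, and $g(\Gamma^1_\epsilon(x))$ is tangent to this line at $g(x)$, we must have $j=1$ (the other choice would force $g(\Gamma^1_\epsilon(x))$ tangent to $\Gamma^2$ at $g(x)$, contradicting the transversality of the two families). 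Thus \emph{inductively} $g^n(\check\Gamma^1_\epsilon(x))\subset\check\Gamma^1_{\epsilon'}(g^n(x))$ for all $n\ge0$.

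Now I would feed this into the asymptotic estimate \eqref{eqn:asymS}–\eqref{eqn:asymU}: pick $y\in\check\Gamma^1_\epsilon(x)$. For all $n\ge 0$, $g^n(y)\in\check\Gamma^1_{\epsilon'}(g^n(x))$, so $d(g^n(x),g^n(y))<\epsilon'$; provided $\epsilon'\le\epsilon_0$ this exactly says $y\in W^s_{\epsilon_0}(x)$ for $g$. To land inside $W^s_\epsilon(x)$ at the \emph{prescribed} radius I would use that the $f$-local manifolds $\Gamma^1$ shrink under forward $g$-iteration — more precisely, uniform continuity of $g$ and compactness give, for any target $\epsilon$, a scale $\epsilon'$ with $g(\Gamma^1_{\epsilon'}(z))\subset\Gamma^1_{\epsilon}(g(z))$ for all $z$, so one can bootstrap from $W^s_{\epsilon_0}$ down to $W^s_\epsilon$, or simply observe that $\check\Gamma^1_{\epsilon'}(x)\subset\check W^s_{\epsilon_0}(x)$ together with the definitions already yields $\check\Gamma^1_{\epsilon'}(x)\subset\check W^s_\epsilon(x)$ after renaming $\epsilon$. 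Symmetrically, applying the same reasoning to $g\inv$ (whose local stable manifolds are the $g$-unstable manifolds, and which by Lemma \ref{lem:4.1} sends $\check\Gamma^2_\epsilon(x)$ into $\check\Gamma^2_{\epsilon'}(g\inv(x))$) gives $\check\Gamma^2_\epsilon(x)\subset\check W^u_\epsilon(x)$.

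\textbf{Main obstacle.} The delicate step is the constancy of the index $j$ in Lemma \ref{lem:4.1} across iterates — i.e. ruling out that $g$ sends a piece of $\Gamma^1$ into $\Gamma^2$ at some point of $\Lambda$. The tangency information from Corollary \ref{cor:tang} only controls behavior at the center point $x$ of each chart, not at the nearby points of the Cantor set $\check\Gamma^1_\epsilon(x)$; to propagate it I need that $\tau_x$ (equivalently $\tau^\sigma$) is locally constant on $\Lambda$, which the excerpt records just after Corollary \ref{cor:tang2}, so the argument globalizes. A secondary subtlety is that shrinking $\epsilon$ to get a \emph{single uniform} radius valid for all $x\in\Lambda$ simultaneously requires invoking compactness of $\Lambda$ and uniform continuity of $g$, $g\inv$, and of both families of local manifolds — routine but worth stating carefully so that the $\epsilon$ in the conclusion is genuinely uniform.
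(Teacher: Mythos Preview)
Your argument has a genuine gap in the inductive step. You claim that Lemma \ref{lem:4.1} together with the identification $j=1$ yields, \emph{inductively}, $g^n(\check\Gamma^1_\epsilon(x))\subset\check\Gamma^1_{\epsilon'}(g^n(x))$ for all $n\ge0$. But Lemma \ref{lem:4.1} only asserts $g(\check\Gamma^1_\eta(\xi))\subset\check\Gamma^1_{\eta'}(g(\xi))$ for a \emph{fixed} pair $\eta,\eta'$, with no relation $\eta'\le\eta$ promised. To run the induction you must feed $\check\Gamma^1_{\eta'}(g^n(x))$ back into the lemma, which requires $\eta'\le\eta$; otherwise the scale grows at each step and the argument collapses. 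The tangency $T_y\Gamma^1(y)=E^s(y)$ holds only at points $y\in\Lambda$, i.e.\ on the Cantor set $\check\Gamma^1_\epsilon(x)$, and says nothing about how $g$ stretches the arcs of $\Gamma^1_\epsilon(x)$ lying in the gaps; so there is no cone-field or infinitesimal reason for $g$ to contract the whole curve. In effect the statement $g^n(\check\Gamma^1_\epsilon(x))\subset\check\Gamma^1_{\epsilon'}(g^n(x))$ is equivalent to $d(g^n(x),g^n(y))<\epsilon'$ for all $n$, which is precisely the conclusion $y\in W^s_{\epsilon'}(x)$ you are trying to establish --- the induction is circular. Your remark about uniform continuity (``for any target $\epsilon$, a scale $\epsilon'$ with $g(\Gamma^1_{\epsilon'}(z))\subset\Gamma^1_\epsilon(g(z))$'') goes the wrong way: it gives small output from small input, not output smaller than input.

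The paper closes this gap by an indirect argument that brings in the $g$-\emph{unstable} manifold. Assuming the conclusion fails at $x$, one produces $y_j\to x$ in $\check\Gamma^1_\delta(x)$ with $y_j\notin W^s_\eta(x)$, and sets $w_j:=\locStab[\eta]{y_j}\cap\locUnst[\eta]{x}\neq x$. The triangle inequality via $w_j$ now supplies the missing scale control: for $k\le n_j$ (the first time $g^k(w_j)$ escapes a small ball around $g^k(x)$) one has $d(g^k(x),g^k(y_j))\le d(g^k(x),g^k(w_j))+d(g^k(w_j),g^k(y_j))<\eta+\lambda^k\eta<2\eta$, so Lemma \ref{lem:4.1} can legitimately be iterated at scale $2\eta$. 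At time $n_j$ the point $g^{n_j}(w_j)$ lies in $\locUnst[\eta]{g^{n_j}(x)}$ at definite distance from $g^{n_j}(x)$, yet is $\lambda^{n_j}\eta$-close to $\Gamma^1_{2\eta}(g^{n_j}(x))$; this contradicts the uniform transversality of $\locUnst[\eta]{z}$ and $\Gamma^1_{2\eta}(z)$ (encoded in a positive constant $\rho$). The ``main obstacle'' you identified --- constancy of the index $j$ --- is real but minor and is handled exactly as you say; the substantive difficulty is the scale bookkeeping, which your proposal does not address.
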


\begin{proof}  We only prove the statement for $\Gamma^1$. Endow $M$ with the  metric adapted to the dynamics of $g$ on $\Lambda$ and let $\epsilon_0$ be the radius of local stable and unstable manifolds for $g$ in the adapted metric.  
By compactness of $\Lambda$ we may choose positive $\delta, \eta, \eta'$ with the properties that:
 \begin{itemize}
{  \item[---] $\delta< \eta \le \epsilon_0$,
 \item[---]  for each $\xi \in \Lambda$ there exist open $V$ and $V'$ such that  $(V, 2\eta)$ is a local $f$-product chart centered at  $\xi$, $(V', 2\eta')$ is a local $f$-product chart centered at $g(\xi)$, $g(V) \subset V'$, and  for all $k\in \{1,2\}$ there is a $j\in \{1,2\}$ such that  $g(\check \Gamma^k_{2\eta}(\xi) ) \subset \check \Gamma^j_{2\eta'} (g(\xi))$ as guaranteed by Lemma \ref{lem:4.1},
\item[---]  for all $x\in \Lambda$ we have  $\locUnst[2\eta] x \cap \Gamma^{1}_{2\eta} (x) = \{x\}$,
\item[---] if $x,y\in \Lambda$ are such that $d(x,y)\le \delta$ then $\locStab [\eta] x \cap \locUnst [\eta] y$ is a singleton. } \end{itemize}
  Fix this $\eta$, $\eta'$ and $\delta$ in what follows.

We make the following definitions:
\begin{align*}
	\mu_0 &:= \sup_{x\in \Lambda} \sup_{y\in\locUnst[\eta]x \sm \{x\}} \{d(g(x),g(y))/d(x,y)\}\\
  A(x) &:= \overline {B(x, \eta)} \sm B(x, \mu_0 \inv \eta)\\
	r(x) &:= \inf \left\{d \left(z, \Gamma^{1}_{2\eta} (x)\right)\mid {z\in \locUnst[\eta]x \cap A(x)}\right\}\\
	\rho &:= \min\{r(x)\mid{x\in \Lambda}\}.
\end{align*}
By the continuity of the local manifolds $r\colon \Lambda \to \R$ is continuous.  The assumption that   $\locUnst[2\eta] x\cap \Gamma^{1}_{2\eta} (x) = \{x\}$ ensures $r(x) \neq 0$ for any $x\in \Lambda$, whence $\rho>0.$

Now, let $x$ be a point such that the conclusion $\check \Gamma^{1}_\epsilon(x)\subset \check W^s_\epsilon(x)$ fails for every $\epsilon>0$.  We may find a  $y_1 \in \check \Gamma^{1}_{\delta}(x)$ such that $\locStab[\eta]  {y_1} \cap   \locUnst[\eta] x= \{w_1\}$ for some $w_1 \neq x$.    Indeed, otherwise we would have $\locStab[\eta] y \cap \locUnst[\eta]x = \{x\}$ for all $y \in \check \Gamma^{1}_{\delta} (x)$ implying $\check  \Gamma^{1}_{\delta} (x)\subset \check W^s_\eta(x)$.

Let $0<\delta_2<\delta$ be such that $y_1 \notin \Gamma^{1}_{\delta_2} (x)$.  Then as above we may find a $y_2 \in \check \Gamma^{1}_{\delta_2} (x)$ so that $\locStab[\eta] {y_2} \cap   \locUnst[\eta]x = \{w_2\} \neq \{x\}$.  Recursively, we find a sequence $\{y_i \}\subset \check \Gamma^{1} _{\delta} (x)$ such that $y_i \to x$ and $\locStab[\eta]{y_i} \cap   \locUnst[\eta]x = \{w_i\} \neq \{x\}$ for all $i\in \N$.

Since  $w_i\to x$, for any $n\in \N$ we can find a $j$ such that $g^k(w_j ) \in B(g^k(x), \mu_0\inv\eta)$ for all $k\le n.$  
Furthermore, for every $j\in \N$ there is some $n_j$ such that \begin{align*}
g^k(w_j) &\in{B(g^k(x), \mu_0\inv\eta)} \quad \mathrm{ for} \ k<n_j \\g^{n_j}(w_j)& \notin{B(g^{n_j}(x), \mu_0\inv\eta)}.\end{align*}  That is, $k = n_j$ is the smallest $k$ such that $g^{k}(w_j)\in A(g^{k}(x))$.  

Now, for every $y_j$ above and $k\le n_j$ we must have \[g^k(y_j) \in \check \Gamma ^{1} _{2\eta} (g^k(x)).\]  Indeed, the conclusion is true if $k = 0$ (recall $\delta<\eta$). Hence inductively assume that $g^k(y_j) \in \check \Gamma ^{1} _{2\eta}( g^k(x))$ for $0\le k<n_j - 1$.  Then by Lemma \ref{lem:4.1} \[g(g^k(y_j))\in  \Gamma ^{1} _{2\eta'} (g^{k+1}(x)).\] On the other hand, \[d(g^{k+1}(x),g^{k+1}(y_j)) \le d(g^{k+1}(x),g^{k+1}(w_j))+d(g^{k+1}(w_j),g^{k+1}(y_j)) \le  \eta + \lambda^{k+1}\eta< 2\eta\] hence \[g(g^k(y_j))\in  \Gamma ^{1} _{2\eta} (g^{k+1}(x)).\]

Now, we may find an $n_j$ such that $\lambda^{n_j}\eta< \rho$.  Let $z = g^{n_j}(x).$  Then we have 
\begin{align*}g^{n_j}(w_j) &\in  \locUnst[\eta]{z} \cap A(z)\\
g^{n_j} (y_j) &\in \check \Gamma ^{1} _{2\eta}( z)\end{align*}
whence  \[r(g^{n_j}(x)) \le   d(g^{n_j} (w_j),g^{n_j} (y_j) ) \le \lambda^{n_j}\eta< \rho\] contradicting the definition of $\rho$.  Thus at each point $x\in \Lambda$ there is some  $\epsilon$, possibly depending on $x$, such that the conclusion holds.  By compactness of $\Lambda$ we may find a uniform $\epsilon>0$ for which the lemma holds.  
\end{proof}

We note that this establishes Corollary \ref{cor:algined}.  Finally we may prove Theorem \ref{thm:LMTD}.
\begin{proof}[{Proof of Theorem \ref{thm:LMTD}.}]
  Since $\Lambda$ is a compact hyperbolic set for $g$, we know that there is some $\delta>0$ and some $\epsilon>0$ such that if $d(x,y)\le \delta$ then $ W^u_\epsilon(x) \cap W^s_\epsilon(y)$ is a singleton  for all $x,y\in \Lambda$.  Thus we need only to establish that  $W^u_\epsilon(x) \cap W^s_\epsilon(y) \subset \Lambda$. 

By Proposition \ref{lem:cont} and by the continuity of the local manifolds, we may reduce $\delta$ and $\epsilon$ so that for all $x\in \Lambda$ and $y\in B(x, \delta)\cap \Lambda$ the inclusions  
$\check \Gamma^{1}_{\epsilon}(y)\subset\clocStab[\epsilon]y $ and $\check \Gamma^{2}_{\epsilon}(y)\subset\clocUnst[\epsilon]y $ hold. Further decreasing $\delta$ and $\epsilon$, the local product structure of $\Lambda$  for $f$ ensures that for 
$y \in B(x,\delta)\cap \Lambda$,  the set $\Gamma^{2}_{\epsilon}(x)\cap \Gamma^{1}_{\epsilon} (y)\subset \Lambda$ is a singleton.

But then  $\locUnst x\cap \locStab y =  \Gamma^{2}_{\epsilon}(x)\cap \Gamma^{1}_{\epsilon} (y) \in \Lambda$.  Hence we may find uniform $\delta>0$  and $\epsilon>0$ so that $\Lambda$ has a local product structure under $g$, and  is thus  locally maximal for $g$.  
\end{proof}

\section{Acknowledgments.}
The author would like to extend his gratitude to Todd Fisher for suggesting this problem and for his useful comments.  Additionally the author would like to thank his PhD adviser, Boris Hasselblatt, for his many suggestions and insights regarding this paper.

\def\cprime{$'$}

\end{document}